\DeclareMathAlphabet{\mathpzc}{OT1}{pzc}{m}{it}
\newcommand{\marginextend}[1]{ \addtolength{\oddsidemargin}{-#1}  \addtolength{\evensidemargin}{-#1}
  \addtolength{\textwidth}{#1}\addtolength{\textwidth}{#1}}
\newcommand{\updownextend}[1]{ \addtolength{\topmargin}{-#1}  \addtolength{\textheight}{#1}
\addtolength{\textheight}{#1}}
\DeclareFontFamily{OT1}{pzc}{}
\DeclareFontShape{OT1}{pzc}{m}{it}{<-> s * [1.10] pzcmi7t}{}
\DeclareMathAlphabet{\mathpzc}{OT1}{pzc}{m}{it}
\DeclareSymbolFont{SY}{U}{psy}{m}{n}
\DeclareMathSymbol{\emptyset}{\mathord}{SY}{'306}
\theoremstyle{plain}
\newtheorem{thm}{Theorem}[section]
\newtheorem*{thm*}{Theorem}
\newtheorem{cor}[thm]{Corollary}
\newtheorem{lem}[thm]{Lemma}
\newtheorem{prop}[thm]{Proposition}
\newtheorem{defn}[thm]{Definition}
\newtheorem{rem}[thm]{Remark}
\newtheorem{ex}[thm]{Example}
\newtheorem{notation}[thm]{Notation}
\newtheoremstyle{named}{}{}{\itshape}{}{\bfseries}{.}{.5em}{#1 \thmnote{#3}}
\theoremstyle{named}
\numberwithin{equation}{section}
\def\C{{\mathbb C}}
\def\O{{\mathcal O}}
\def\norm#1{\left\|{#1}\right\|}
\def\l{\lambda}
\def\ra{\rightarrow}
\def\ov{\overline}
\def\m{\mathcal}
\def\mb{\mathbb}
\def\mr{\mathrm}
\def\beq{\begin{eqnarray}}
\def\eeq{\end{eqnarray}}
\def\beqa{\begin{eqnarray*}}
\def\eeqa{\end{eqnarray*}}
\def\del{\partial}
\def\ov{\overline}
\def\bl{\boldsymbol}
\def\sgn{{\rm sgn}}
\def\bl{\boldsymbol}
\def\deg{{\rm deg}}
\newcommand{\overbar}[1]{\mkern 1.5mu\overline{\mkern-1.5mu#1\mkern-1.5mu}\mkern 1.5mu}
\newcommand{\be}{\begin{equation}}
\newcommand{\ee}{\end{equation}}
\newcommand{\bea}{\begin{eqnarray}}
\newcommand{\eea}{\end{eqnarray}}
\newcommand{\Bea}{\begin{eqnarray*}}
\newcommand{\Eea}{\end{eqnarray*}}
\newcommand{\inner}[2]{\langle #1,#2 \rangle }%
\newcounter{cnt1}
\newcounter{cnt2}
\newcounter{cnt3}
\newcommand{\blr}{\begin{list}{$($\roman{cnt1}$)$}
 {\usecounter{cnt1} \setlength{\topsep}{0pt}
 \setlength{\itemsep}{0pt}}}
\newcommand{\bla}{\begin{list}{$($\alph{cnt2}$)$}
 {\usecounter{cnt2} \setlength{\topsep}{0pt}
 \setlength{\itemsep}{0pt}}}
\newcommand{\bln}{\begin{list}{$($\arabic{cnt3}$)$}
 {\usecounter{cnt3} \setlength{\topsep}{0pt}
 \setlength{\itemsep}{0pt}}}
\newcommand{\el}{\end{list}}
\DeclareMathOperator  {\Aut} {Aut}
\DeclareMathOperator  {\tr} {tr}
\title[Brown-Halmos type Theorems]{Brown-Halmos type Theorems on the proper images of bounded symmetric domains}
\author[Ghosh]{Gargi Ghosh$^\dag$}\address[Ghosh]{Silesian University in Opava, 746 01, Czech Republic} \address{Jagiellonian University, 30-348 Krakow, Poland}
\email[Ghosh]{gargighosh1811@gmail.com}
\author[Shyam Roy]{Subrata Shyam Roy}\address[Shyam Roy]{Indian Institute of Science Education And Research Kolkata, 741246, India}
\email[Shyam Roy]{ssroy@iiserkol.ac.in}
\subjclass[2020]{30H10, 47B35, 32A10} \keywords{Toeplitz operators, Hardy space, Complex reflection groups, Bounded symmetric domains} 
\thanks{$^\dag$This work is supported by postdoctoral fellowship from Silesian University in Opava under GA CR grant no. 21-27941S and is supported by the project No. 2022/45/P/ST1/01028 co-funded by the National Science Centre and the European Union Framework Programme for Research and Innovation Horizon 2020 under the Marie Sklodowska-Curie grant agreement No. 945339. For the purpose of Open Access, the author has applied a CC-BY public copyright licence to any Author Accepted Manuscript (AAM) version arising from this submission.}
\begin{document}
\begin{abstract}
    Let $\Omega\subseteq\mathbb C^n$ be a bounded symmetric domain  and $\bl f :\Omega \to \Omega^\prime\subseteq \mathbb C^n$ be a proper holomorphic mapping which is factored by a finite complex reflection group  $G.$ We identify a family of reproducing kernel Hilbert spaces on $\Omega^\prime$ arising naturally from the isotypic decomposition of the regular representation of $G$ on the Hardy space $H^2(\Omega).$ Each element of this family can be realized as a closed subspace of some $L^2$-space on the \v{S}ilov boundary of $\Omega^\prime$. The reproducing kernel Hilbert space associated to the sign representation of $G$ is the Hardy space $H^2(\Omega^\prime).$  We establish a Brown-Halmos type characterization for the Toeplitz operators on $H^2(\Omega^\prime),$ where $\Omega^\prime$ is the image of the open unit polydisc $\mathbb D^n$ in $\mathbb C^n$ under a proper holomorphic mapping factored by the finite complex reflection group $G(m,p,n).$ Moreover, we prove various multiplicative properties of Toeplitz operators on $H^2(\Omega^\prime)$,  where $\Omega^\prime$  is a proper holomorphic image of a bounded symmetric domain. 
\end{abstract}
\maketitle
\section{Introduction}
Let $\mb D$ denote the open unit disc in the complex plane $\mb C$ and $H^2(\mb D)$ denote the Hardy space on $\mb D$.  The study of Toeplitz operators on $H^2(\mb D)$ gained significant attention after the influential paper by Brown and Halmos \cite{BH1964} which explored the algebraic properties of these operators. A key result from their work provides a characterization of Toeplitz operators on $H^2(\mb D).$ This result was subsequently extended to $H^2(\mb D^n)$ in \cite{MSS18} which states that a bounded linear operator $T$ on $H^2(\mb D^n)$ is a Toeplitz operator if and only if $$T_j^*TT_j=T \text{ for every }j=1,\ldots,n,$$ where $T_j$ denotes the $j$-th coordinate multiplication operator on $H^2(\mb D^n).$ In this article, our primary objective is to establish a similar characterization for Toeplitz operators on $H^2(D),$ $D$ being a proper holomorphic image of the polydisc $\mb D^n.$ Inspired by \cite{BH1964}, we also prove various multiplicative properties of Toeplitz operators on $H^2(D)$ and in this case $D$  is allowed to be any proper holomorphic image of a bounded symmetric domain. To achieve this, we proceed as follows. 
\begin{itemize}
    \item[(i)] Firstly, we identify an appropriate notion of Hardy space $H^2(D)$ from a naturally occurring family of reproducing kernel Hilbert spaces.
    \item[(ii)] Subsequently, we show that $H^2(D)$ can be realized as a closed subspace of an $L^2$-space on the \v{S}ilov boundary of $D$, which leads to the study of Toeplitz operators on $H^2(D)$.
\end{itemize} %

We begin by recalling some known facts to lay the groundwork for our results. Let $\Omega$ be a domain in $\mb C^n$ and ${\rm Aut}(\Omega)$ be the group of all biholomorphic automorphisms of $\Omega.$ \begin{defn}\cite[p. 8]{Arazy}
    A bounded domain $\Omega$ is said to be symmetric if for every $a,b \in \Omega$ there exists an involution $\tau \in {\rm Aut}(\Omega)$ which interchanges $a$ and $b$.
\end{defn} \noindent The open unit disc $\mb D$, the unit polydisc $\mb D^n,$ the Euclidean ball $\mb B_n$ in $\mb C^n$ are some examples of bounded symmetric domains. The Hardy space $H^2(\Omega)$ on a bounded symmetric domain $\Omega$ is a well-studied function space \cite{Hahn-Mitchell:1969, Koranyi1965}. It is isometrically isomorphic to a closed subspace of $L^2(\del\Omega,d\Theta),$ where $d\Theta$ is the unique normalised $I_\Omega(0)$-invariant measure on the \v{S}ilov boundary $\del \Omega$ of $\Omega$ and $I_\Omega(0)=\{\phi \in {\rm Aut}(\Omega) : \phi(0)=0\}$ is the isotropy subgroup of $0$ in ${\rm Aut}(\Omega).$ 

\begin{defn}\cite{BD85, DS91}
    A proper holomorphic map $ \bl \pi:\Omega\to \widetilde{\Omega}\subseteq \mb C^n$ is {\it factored by automorphisms} if there exists a finite subgroup $G\subseteq {\rm Aut}(\Omega)$ such that for every $z\in \Omega,$
$\bl \pi^{-1}\bl \pi(z)=\cup_{\sigma\in G}\{\sigma(z)\}.$
\end{defn}  \noindent It is known that such a group $G$ is either a complex reflection group or conjugate to a complex reflection group. A finite complex reflection group $G$ is characterized by the fact that the ring of $G$-invariants polynomials in $n$ variables is a polynomial ring generated by some homogeneous system of polynomials $\{\theta_i\}_{i=1}^n$ associated to $G$ \cite[p. 282]{ST1954}. 
If a bounded domain $\Omega\subseteq\mb C^n$  is  a $G$-space, then the {\it basic polynomial} mapping
$\bl\theta=(\theta_1,\ldots,\theta_n):\Omega\to\bl\theta(\Omega),$
 is a proper holomorphic mapping factored by $G$ and $\bl\theta(\Omega)$ is a domain, see \cite{Rud1982, Try13}. Let $\widetilde{\Omega}\subseteq \mb C^n$ be an open set and $\phi: \Omega \to \widetilde{\Omega}$ be a proper map factored by $G,$ then $\widetilde{\Omega}$ is biholomorphic to $\bl\theta(\Omega)$ \cite[Proposition 4.4]{kag}. Henceforth, we work with $\bl\theta(\Omega)$ instead of the image of $\Omega$ under a proper holomorphic  map factored by $G.$ 
 
 An element $\sigma$ of $G$ acts on $\Omega$ by $\sigma \cdot z =\sigma^{-1}(z)$ and therefore on the Hardy space $H^2(\Omega)$ by $(\sigma \cdot f)(z)=f(\sigma^{-1} \cdot z).$ Under this action the Szeg\"o kernel $S_\Omega$ of $\Omega$ is $G$-invariant, that is, $$S_\Omega(\sigma \cdot z,\sigma \cdot w)=S_\Omega(z,w)\,\, \text{ for all } \sigma \in G \text{ and } z,w \in\Omega,$$ which makes the left regular representation $R: G\to \m U(H^2(\Omega))$ well-defined, $\m U(X)$ being the group of all unitary operators on the Hilbert space $X.$ Consequently, $H^2(\Omega)$ decomposes (canonical decomposition) into an orthogonal direct sum of isotypic components of the left regular representation of $G$ indexed by $\widehat G$ (the set of equivalence classes of irreducible representations of $G$). In Proposition \ref{idenh} and \ref{idenh2}, we prove the following:
\begin{itemize}
     \item  For each one-dimensional representation $\varrho\in \widehat G,$ we prove that the associated isotypic component is isometrically isomorphic to some analytic Hilbert module $H^2_\varrho(\bl\theta(\Omega))$ over the polynomial ring $\mb C[z_1,\ldots,z_n].$
 \end{itemize} In other words, we obtain a family 
\bea\label{family}\{H_\varrho^2(\bl \theta(\Omega)):\varrho\in\widehat G_1 \}\eea
of reproducing kernel Hilbert spaces, where $\widehat{G}_1$ is the equivalence classes of one-dimensional representations of $G.$ An analogous phenomenon was observed for the Bergman space on $\Omega,$ where the isotypic component related to the sign representation is isometrically isomorphic to the Bergman space of $\bl \theta(\Omega)$ \cite{kag}. On the basis of this observation, it is natural to define the Hardy space on $\bl \theta(\Omega)$ in the following manner. 

The \emph{sign representation} of a finite complex reflection group $G,$ $\sgn : G \to \mb C^*,$ is defined by \cite[p. 139, Remark (1)]{MR460484} \begin{eqnarray*} \sgn(\sigma) = (\det(\sigma))^{-1},\,\,\,\,\, \sigma\in G,\end{eqnarray*} see also Equation \eqref{sign}.
\begin{defn}\label{def}
    The reproducing kernel Hilbert space associated to the sign representation of $G$ in Equation \eqref{family} is said to be the \emph{Hardy space} on $\bl \theta(\Omega),$ denoted by $H^2(\bl \theta(\Omega)).$
\end{defn} For $\Omega = \mb D^n,$ Definition \ref{def} coincides with the one in \cite{Misra-SSR-Zhang} when $G$  is the permutation group on $n$ symbols and the one in \cite{GG23} when $G$ is a finite complex reflection subgroup of ${\rm Aut}(\mb D^n).$

Now we turn our attention to define Toeplitz operators on $H^2(\bl \theta(\Omega)).$ Noting that $G$ is a subgroup of $I_\Omega(0)$ and $d\Theta$ is a $I_\Omega(0)$-invariant measure  on $\del\Omega,$ we conclude that $d\Theta$ is also $G$-invariant on $\del\Omega$. It ensures that the (left) regular representation $R: G\to \m U(L^2(\del\Omega,d\Theta))$ is well-defined, and $L^2(\del\Omega,d\Theta)$ admits an orthogonal decomposition indexed by $\widehat G$ into the isotypic components associated to the regular representation. For every $\varrho\in\widehat G_1,$ the associated isotypic component of $L^2(\del\Omega)$ is isometrically isomorphic to some $L^2$-space with respect to some measure $d\Theta_\varrho$ supported on the \v{S}ilov boundary of $\bl\theta(\Omega),$ where the measure $d\Theta_\varrho$ is uniquely determined by the representation $\varrho.$ We denote it by $L^2(\del\bl \theta(\Omega),d\Theta_\varrho).$ Now one of our key findings states the following: \begin{itemize}
    \item Each $H_\varrho^2(\bl \theta(\Omega))$ can be realized as a closed subspace of $L^2(\del\bl \theta(\Omega),d\Theta_\varrho).$
\end{itemize}  For $u\in L^\infty(\del\bl \theta(\Omega)),$ the orthogonal projection $P_\varrho:L^2(\del\bl \theta(\Omega),d\Theta_\varrho)\to H_\varrho^2(\bl \theta(\Omega))$ induces the Toeplitz operator $T_u$ on $ H_\varrho^2(\bl \theta(\Omega))$  by 
$$T_uf=P_\varrho(uf).$$ We are now ready to present one of our main results. We refer to this characterization as a \emph{Brown-Halmos type characterization} in analogy with Theorem 6 of the  celebrated paper \cite{BH1964} by Brown and Halmos. 

\begin{thm}\label{bhthm} Let $m,p$ and $n$ be natural numbers such that $p|m$ and $\bl\theta$ be the basic polynomial map associated to the irreducible complex reflection group $G(m, p, n)$ which acts on $\mb D^n.$
    Suppose that $T :  H^2(\bl \theta(\mb D^n)) \to H^2(\bl \theta(\mb D^n))$ is a bounded linear operator. Then $T$ is a Toeplitz operator if and only if
    \Bea T^*_n T T_n&=&T\,\, \text{ ~and~ }\,\, \\ T^*_i T T^p_n &=&T T_{n-i}\text{~for~} i=1,\ldots,n-1,\Eea
    where $(T_1,\ldots,T_n)$ on $H^2(\bl \theta(\mb D^n))$ denotes the $n$-tuple of multiplication operators by the coordinate functions. 
\end{thm}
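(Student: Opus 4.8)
\emph{The plan is to transfer the statement to the Hardy space of the polydisc.} By the sign-isotypic identification of Proposition~\ref{idenh2}, the isometry $\Gamma\colon H^2(\bl\theta(\mb D^n))\to\mathcal H_{\sgn}$ onto the sign-isotypic component $\mathcal H_{\sgn}\subseteq H^2(\mb D^n)$ carries each $T_i$ to multiplication by $\theta_i$ restricted to $\mathcal H_{\sgn}$. For $G(m,p,n)$ one may take $\theta_i=e_i(z_1^m,\dots,z_n^m)$ for $i<n$ and $\theta_n=(z_1\cdots z_n)^{m/p}$, so that $\theta_n^p=e_n(z_1^m,\dots,z_n^m)$. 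An orthogonal basis of $\mathcal H_{\sgn}$ is given by the alternants $a_\alpha=\det(z_i^{\alpha_j})$, each of norm $\sqrt{n!}$, indexed by strictly decreasing $\alpha\in\mb Z_{\ge0}^n$ whose entries share a common residue $r\pmod m$ with $r\equiv-1\pmod{m/p}$. A short computation then shows that $T_n$ is the isometric shift $a_\alpha\mapsto a_{\alpha+(m/p)\mathbf 1}$, that $T_n^p\colon a_\alpha\mapsto a_{\alpha+m\mathbf 1}$, and that, by the Pieri rule, $T_i\colon a_\alpha\mapsto\sum_{|S|=i}a_{\alpha+m\mathbf 1_S}$ (here $\mathbf 1$ is the all-ones vector, $\mathbf 1_S$ an indicator, and alternants are read with the antisymmetric convention $a_{\tau\gamma}=\sgn(\tau)a_\gamma$, $a_\gamma=0$ on repeats). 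Finally, using that a Toeplitz symbol $\phi=u\circ\bl\theta$ is $G$-invariant, hence $S_n$-symmetric, I would record for $\mathbf T_\phi=P\,M_\phi|_{\mathcal H_{\sgn}}$ the key matrix formula
\[
\langle \mathbf T_\phi\,a_\beta,\,a_\alpha\rangle=\sum_{\rho\in S_n}\sgn(\rho)\,\widehat\phi(\alpha-\rho\beta),
\]
where $\widehat\phi$ denotes Fourier coefficients on the torus.

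Granting these formulas, the forward implication is a direct calculation. The relation $T_n^*TT_n=T$ is immediate, since $\rho\mathbf 1=\mathbf 1$ gives $\alpha+(m/p)\mathbf 1-\rho(\beta+(m/p)\mathbf 1)=\alpha-\rho\beta$. For $T_i^*TT_n^p=TT_{n-i}$ I would expand both sides in the alternant basis: using $T_n^p a_\beta=a_{\beta+m\mathbf 1}$ and the Pieri expansion of $T_i a_\alpha$, the left-hand side becomes $\sum_{|R|=n-i}\sum_\rho\sgn(\rho)\widehat\phi(\alpha-\rho\beta-m\mathbf 1_R)$ after setting $R=S^{c}$, while the right-hand side, via $\rho\mathbf 1_{S'}=\mathbf 1_{\rho S'}$, becomes the same sum after the substitution $R=\rho S'$; the antisymmetric convention makes the two tallies of boundary terms agree identically.

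The substance of the theorem is the converse, where I would argue structurally. The operator $T_n$ is a pure isometry on $\mathcal H_{\sgn}$ with wandering subspace $W=\overline{\operatorname{span}}\{a_\gamma:\gamma_n=m/p-1\}$, so $\mathcal H_{\sgn}=\bigoplus_{k\ge0}T_n^kW\cong H^2(\mb D)\otimes W$, and the relation $T_n^*TT_n=T$ is exactly the hypothesis of the operator-valued Brown-Halmos theorem for this shift; it already forces $T=T_\Phi$ to be Toeplitz along the $T_n$-grading, with some operator symbol $\Phi\in L^\infty(\mb T,\mathcal B(W))$. The remaining task, which I expect to be the main obstacle, is to use the $n-1$ relations $T_i^*TT_n^p=TT_{n-i}$ to force $\Phi$ to descend from a single scalar $G$-invariant symbol $\phi$; equivalently, to build a bounded bilateral extension $L$ on $L^2(\del\bl\theta(\mb D^n),d\Theta_{\sgn})$ commuting with every $M_{\theta_i}$, so that $L=M_\phi$ with $P_{\sgn}L|_{\mathcal H_{\sgn}}=T$, after which boundedness of $L$ yields $\phi\in L^\infty$ and $T=\mathbf T_\phi$. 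These relations intertwine the Pieri actions of $T_i$ and $T_{n-i}$ with the internal shift structure of $W$, which is itself an alternant space on the first $n-1$ coordinates, so I anticipate an induction on $n$ whose base case $n=1$ is the classical Brown-Halmos theorem (there $T_1^*TT_1=T$ is the shift relation and condition~(2) is vacuous). The two delicate points within this induction are the \emph{consistent} recovery of $\widehat\phi$ along all generalised diagonals from the two conditions together, and the passage from a bounded generalised Toeplitz matrix to an honest $L^\infty$ symbol, where the boundedness of $T$ must be converted into an essential-supremum bound.
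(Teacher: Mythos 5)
Your forward direction is essentially sound (granting the asserted Pieri computations and the alternant bookkeeping), and it differs from the paper only in being coordinate-based: the paper obtains the same two identities much more cheaply from the single boundary relation $\ov{\theta_i}\,\theta_n^p=\theta_{n-i}$ on $\mb T^n$, which gives $M_i^*M_n^p=M_{n-i}$ on $L^2_{\rm sgn}(\bl\theta(\mb T^n))$ (Lemma \ref{prelem}), and then compresses. The genuine gap is in the converse, which --- as you say yourself --- is the substance of the theorem, and which your proposal does not prove. After invoking the operator-valued Brown--Halmos theorem along the pure isometry $T_n$ to write $T=T_\Phi$ with $\Phi\in L^\infty(\mb T,\m B(W))$, everything that remains (your ``main obstacle,'' the ``two delicate points,'' the ``anticipated induction on $n$'') is only a plan. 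That reduction is genuinely insufficient: the relations $T_i^*TT_n^p=TT_{n-i}$ still have to be converted into constraints on the operator-valued Fourier coefficients of $\Phi$; the operators $T_i$, $i<n$, do not respect the grading $\m H_{\sgn}=\oplus_{k\geq 0}T_n^kW$ (the Pieri action sends the $k$-th summand into the $k$-th and $(k+p)$-th), so $\Phi$ has no evident reason to be scalar; and $W$ is not the sign-isotypic Hardy space of an $(n-1)$-dimensional polydisc, so the proposed induction has no visible inductive structure. Nothing in the proposal shows that $\Phi$ is scalar-valued, $G$-invariant, or realized by an essentially bounded measurable function on $\bl\theta(\mb T^n)$.

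The paper closes exactly this gap by a different mechanism that disposes of both of your ``delicate points'' simultaneously. Working on $\mb P_{\rm sgn}(L^2(\mb T^n))$, it forms the compressions $A_r=M_{\theta_n}^{*r}\,\widetilde{T}\,\widetilde{P}_{\rm sgn}M_{\theta_n}^{r}$. Since multiplication by $\theta_n^r$ pushes every basis vector $\gamma_{\bl p}$, $\bl p\in\m I_G$ (including those with negative exponents), into the holomorphic subspace for $r$ large, the matrix entries of $A_r$ stabilize; together with $\Vert A_r\Vert\leq\Vert\widetilde{T}\Vert$ this yields a WOT limit $A_\infty$ defined on the whole bilateral space. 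The two hypotheses are then used precisely to show that $A_\infty$ commutes with every $M_{\theta_i}$, $i=1,\ldots,n$, and Lemma \ref{lem1} --- the commutant of the tuple of coordinate multiplications on $L^2_{\rm sgn}(\bl\theta(\mb T^n))$ is the maximal abelian algebra $L^\infty(\bl\theta(\mb T^n))$ --- identifies $A_\infty=M_{\widetilde{\phi}}$ with $\widetilde{\phi}\in L^\infty(\mb T^n)$ automatically $G$-invariant; compressing back gives $\widetilde{T}=T_{\widetilde{\phi}}$, hence $T=T_\phi$ by Lemma \ref{equi1}. In other words, rather than extending $T$ bilaterally in the $\theta_n$-direction only and then struggling to descend an operator symbol, one dilates once and lets the commutant theorem produce the scalar $L^\infty$ symbol for free. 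If you want to salvage your outline, this is the step to replace: show that your operator, transported to $\mb P_{\rm sgn}(L^2(\mb T^n))$, extends to a bounded operator commuting with \emph{all} the $M_{\theta_i}$ --- which is what the $A_r$ construction accomplishes.
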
 

An explicit description of $\bl \theta:\mb D^n \to \bl \theta(\mb D^n)$ is given in Equation \eqref{sym}. We highlight the generality of our framework by noting that every irreducible complex reflection group either belongs to the infinite family $G(m, p, n)$ indexed by three parameters, where $m,n,p$ are positive integers and $p$ divides $m,$  or, is one of 34 exceptional groups \cite{ST1954}. In particular, Theorem \ref{bhthm} recovers main results from \cite{BDS2021} and \cite{MR4244837} for $G(1,1,2)$ and $G(1,1,n)$, respectively.


 Our next goal is to establish certain multiplicative properties of Toeplitz operators. The first question to consider is: under what conditions the product of two Toeplitz operators is itself a Toeplitz operator? The following result demonstrates that, on $H^2(\bl \theta(\Omega)),$ this question is closely tied to the corresponding behavior on $H^2(\Omega).$  

\begin{thm}[Generalized zero-product property]\label{zerthm}
    Let the finite complex reflection group $G$ act on the bounded symmetric domain $\Omega$ and $\bl \theta: \Omega \to \bl \theta(\Omega)$ be a basic polynomial map associated to $G.$ Suppose that the $G$-invariant functions $\widetilde{u},\widetilde{v} \in L^\infty(\del\Omega)$ are of the form $\widetilde{u} = u \circ \bl \theta$ and $\widetilde{v} = v \circ \bl \theta.$ If $T_uT_v$ is a Toeplitz operator on $H^2_\mu(\bl \theta(\Omega))$ for some $\mu \in \widehat{G}_1,$ then \begin{itemize}
             \item[\rm (i)] $T_uT_v$ is a Toeplitz operator on $H^2_\varrho(\bl \theta(\Omega))$ for every $\varrho\in \widehat{G}_1.$
             \item[\rm (ii)] Moreover, $T_{\widetilde{u}}T_{\widetilde{v}}$ is a Toeplitz operator on $H^2(\Omega).$
         \end{itemize} Conversely, if $T_{\widetilde{u}}T_{\widetilde{v}}$ is Toeplitz operator on $H^2(\Omega),$ then so is $T_uT_v$ on $H^2_\varrho(\bl \theta(\Omega))$ for every $\varrho\in \widehat{G}_1.$
\end{thm}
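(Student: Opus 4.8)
The plan is to reduce the entire statement to one structural fact about the \emph{semicommutator} and the isotypic decomposition, and then to isolate the single genuinely new ingredient. Write $P$ for the Szeg\"o projection of $L^2(\del\Omega,d\Theta)$ onto $H^2(\Omega)$ and $Q=I-P$; then every Toeplitz operator is $T_f=PM_fP$ and the semicommutator factors through a Hankel operator,
\[
S:=T_{\widetilde u}T_{\widetilde v}-T_{\widetilde u\widetilde v}=-PM_{\widetilde u}QM_{\widetilde v}P=-H_{\overline{\widetilde u}}^{*}\,H_{\widetilde v},\qquad H_f:=QM_fP,
\]
with the identical identity on each $H^2_\varrho(\bl\theta(\Omega))$ upon replacing $P$ by $P_\varrho$. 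Since $\widetilde u=u\circ\bl\theta$ and $\widetilde v=v\circ\bl\theta$ are $G$-invariant and $\widetilde u\widetilde v=(uv)\circ\bl\theta$, the operators $M_{\widetilde u},M_{\widetilde v}$ commute with the regular representation $R$; so do $P,Q$ (because $H^2(\Omega)$ is $R$-invariant), hence so do $T_{\widetilde u},T_{\widetilde v},T_{\widetilde u\widetilde v}$ and $S$. Consequently $S$ preserves every isotypic component and $S=\bigoplus_\varrho S|_{H^2(\Omega)_\varrho}$. I also need that ``$T_uT_v$ Toeplitz'' forces its symbol to be $uv$; this I would obtain from injectivity of the symbol map together with a boundary (Berezin transform) localization showing that the symbol of a product, when it exists, is the product of the symbols. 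Thus in every space ``the product is Toeplitz'' is equivalent to the vanishing of the corresponding semicommutator.

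Next I would invoke the isometric isomorphisms $V_\varrho:H^2(\Omega)_\varrho\to H^2_\varrho(\bl\theta(\Omega))$ of Propositions \ref{idenh} and \ref{idenh2}. Because $\widetilde u=u\circ\bl\theta$, these intertwine $T_{\widetilde u}|_{H^2(\Omega)_\varrho}$ with $T_u$ on $H^2_\varrho(\bl\theta(\Omega))$ (and likewise for $\widetilde v,\widetilde u\widetilde v$), so that $V_\varrho\,S|_{H^2(\Omega)_\varrho}\,V_\varrho^{*}$ is exactly the semicommutator $T_uT_v-T_{uv}$ on $H^2_\varrho(\bl\theta(\Omega))$. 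With this dictionary the easy implications are immediate: if $S=0$ on all of $H^2(\Omega)$ then $S|_{H^2(\Omega)_\varrho}=0$, giving $T_uT_v=T_{uv}$ on every $H^2_\varrho(\bl\theta(\Omega))$, which is part~(i) and the converse; and the hypothesis ``$T_uT_v$ Toeplitz on $H^2_\mu(\bl\theta(\Omega))$ for one $\mu\in\widehat{G}_1$'' translates into $S|_{H^2(\Omega)_\mu}=0$, while part~(ii) is precisely $S=0$ on all of $H^2(\Omega)$. The whole theorem therefore collapses to the single assertion that \emph{if $S|_{H^2(\Omega)_\mu}=0$ for one $\mu\in\widehat{G}_1$, then $S=0$ on all of $H^2(\Omega)$}.

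The proof of this last assertion is where I expect the real work, and the main obstacle, to lie. The mechanism I would pursue is that, because $\widetilde u,\widetilde v$ are $G$-invariant, the Hankel operators $H_{\widetilde u},H_{\widetilde v}$ have mutually unitarily equivalent isotypic blocks: I would build unitaries $W:H^2(\Omega)_\mu\to H^2(\Omega)_\varrho$ and $W':Q L^2_\mu\to Q L^2_\varrho$ that intertwine the blocks of both Hankel operators, whence $S|_{H^2(\Omega)_\varrho}=W\,(S|_{H^2(\Omega)_\mu})\,W^{*}$ and vanishing on one block forces vanishing on all. The natural candidate for $W,W'$ is multiplication by the relative invariant generating the $\varrho\bar\mu$-relative invariants over the ring of $G$-invariants (which commutes with $M_{\widetilde u},M_{\widetilde v}$), renormalized through the weights relating $d\Theta_\mu$ and $d\Theta_\varrho$. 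In the model case $\Omega=\mb D^{n}$ this is transparent: multiplication by the appropriate monomial carries one isotypic component isometrically onto another, and the matrix of $H_{\widetilde v}$ in the adapted orthonormal bases is a Hankel matrix whose entries are the $G$-invariant Fourier coefficients of $\widetilde v$, hence independent of the isotype. The hard part in general is twofold: first, to prove that the renormalized multiplications really are \emph{isometric} (the requirement $W'^{*}W'=I$, which is exactly where the measures $d\Theta_\varrho$ must be used and where a bare polynomial multiplication would fail); and second, to push the statement past the one-dimensional characters, since part~(ii) concerns the full space. For a higher-dimensional irreducible $\pi$, a $G$-invariant symbol gives an operator of the form $\mathrm{id}_\pi\otimes(\cdot)$ on $H^2(\Omega)_\pi$, and I would have to show that the resulting operator on the multiplicity space carries the same semicommutator as the one-dimensional blocks. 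Establishing this uniform block structure of the Hankel operators across \emph{all} isotypic components is the crux on which everything else rests.
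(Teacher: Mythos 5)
There is a genuine gap, and it sits exactly where you yourself locate it: the assertion that vanishing of the semicommutator on a single isotypic block forces vanishing on all blocks (including the higher-dimensional isotypes, which part (ii) requires) is stated but never proved. Moreover, the mechanism you propose for it is unlikely to work as described. Your candidate unitaries $W,W'$ are multiplications by a relative invariant, ``renormalized through the weights relating $d\Theta_\mu$ and $d\Theta_\varrho$''; but those measures live on $\del\bl\theta(\Omega)$, whereas the blocks you need to intertwine sit inside the single space $L^2(\del\Omega,d\Theta)$ with one fixed measure, and multiplication by a polynomial whose modulus is non-constant on $\del\Omega$ is never isometric between subspaces of that space --- no scalar renormalization can repair this. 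The paper needs no isometry between blocks at all. Its key computation (Lemma \ref{invlem}, extended in Equation \eqref{sum}) is that, writing an element of $\mb P_\varrho(H^2(\Omega))$ as $f=\sum_i \ell_{\varrho,i}f_{\varrho,i}$ with each $f_{\varrho,i}$ in the trivial component (the free-module structure of the isotypic components over the invariants, available for \emph{every} $\varrho\in\widehat{G}$, not only characters), one has
\begin{equation*}
T_{\widetilde{u}}\Big(\sum_i \ell_{\varrho,i} f_{\varrho,i}\Big)=\sum_i \ell_{\varrho,i}\,\widetilde{P}_{\tr}(\widetilde{u} f_{\varrho,i}),
\end{equation*}
so the compression of $T_{\widetilde{u}}$ to every isotypic block is one and the same operator $\widehat{f}\mapsto \widetilde{P}_{\tr}(\widetilde{u}\widehat{f})$ on the trivial component, read through multiplication by the $\ell_{\varrho,i}$. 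Passing from one block to another is then a pointwise cancelation of $\ell_{\varrho,i}$ (legitimate since it vanishes only on a null set), not a unitary equivalence; Lemma \ref{zerlem} together with the density of $\sum_\varrho\sum_i\ell_{\varrho,i}\cdot\mb P_{\tr}(H^2(\Omega))$ in $H^2(\Omega)$ then yields both directions of the theorem. This elementary reproducing-kernel identity is precisely the ``uniform block structure'' on which you say everything rests, and it is what your proposal is missing.

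A second, smaller gap: your reduction of ``is a Toeplitz operator'' to ``the semicommutator $T_{\widetilde{u}}T_{\widetilde{v}}-T_{\widetilde{u}\widetilde{v}}$ vanishes'' requires knowing that when $T_uT_v$ is Toeplitz its symbol must be $uv$, and your route to this (injectivity of the symbol map plus a Berezin-type boundary localization) is not established on the domains $\bl\theta(\Omega)$; note that even the paper's injectivity statement (Proposition \ref{onetoone} and its corollary) is proved only when $\Omega$ is irreducible or the polydisc, while Theorem \ref{zerthm} concerns an arbitrary bounded symmetric domain. The paper sidesteps the issue entirely: it carries an arbitrary symbol $q$ with $T_uT_v=T_q$ through the whole argument, transfers the identity between blocks with that same $q$, and never needs to identify $q$ with $uv$. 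If you likewise replace $T_{\widetilde{u}\widetilde{v}}$ by $T_{\widetilde{q}}$ throughout, your steps that are correct (invariance of the isotypic components under operators with $G$-invariant symbols, and the transfer by the unitaries of Lemma \ref{equi1}) survive, but the Hankel factorization --- and with it your proposed intertwining of Hankel blocks --- no longer applies, leaving the crux unproved in any form.
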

In other words, we show that $T_uT_v$ is a Toeplitz operator on $H^2(\bl \theta(\Omega))$ if and only if $T_{\widetilde{u}}T_{\widetilde{v}}$ is Toeplitz operator on $H^2(\Omega).$ We illustrate an immediate application of Theorem \ref{zerthm}. Recall that on $H^2(\mb D)$, the product $T_uT_v$ is a Toeplitz operator if and only if either $u$ is co-analytic or $v$ is analytic \cite{BH1964}. An analogous, though more involved, result for $H^2(\mb D^2)$ can be found in \cite{GZ97}. Using Theorem \ref{zerthm} and \cite{GZ97}, we conclude such a characterization in Theorem \ref{semd2} for $H^2(D),$ $D$ being a proper holomorphic image of the bidisc $\mb D^2.$ An analogous phenomenon arises in the context of identifying commuting tuples of Toeplitz operators which we state below.
\begin{thm}(Commuting property)\label{zerthm1}
    With the same considerations as in Theorem \ref{zerthm}, if $T_uT_v=T_vT_u$  on $H^2_\mu(\bl \theta(\Omega))$ for some $\mu \in \widehat{G}_1,$ then \begin{itemize}
             \item[\rm (i)] $T_uT_v=T_vT_u$ on $H^2_\varrho(\bl \theta(\Omega))$ for every $\varrho\in \widehat{G}_1.$
             \item[\rm (ii)] Moreover, $T_{\widetilde{u}}T_{\widetilde{v}}=T_{\widetilde{v}}T_{\widetilde{u}}$ on $H^2(\Omega).$
         \end{itemize} Conversely, if $T_{\widetilde{u}}T_{\widetilde{v}}=T_{\widetilde{v}}T_{\widetilde{u}}$ on $H^2(\Omega),$ then $T_uT_v=T_vT_u$ on $H^2_\varrho(\bl \theta(\Omega))$ for every $\varrho\in \widehat{G}_1.$
\end{thm}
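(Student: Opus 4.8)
The plan is to run, for the commutator in place of the product, the same $G$-equivariant correspondence that underlies Theorem \ref{zerthm}. Since $\widetilde u=u\circ\bl\theta$ and $\widetilde v=v\circ\bl\theta$ are $G$-invariant, the multiplication operators $M_{\widetilde u},M_{\widetilde v}$ on $L^2(\partial\Omega,d\Theta)$ commute with every $R_\sigma$; combined with the $G$-invariance of the Szeg\"o kernel, and hence of the Szeg\"o projection $P_\Omega$, this forces $T_{\widetilde u}$ and $T_{\widetilde v}$ to commute with the restriction of $R$ to $H^2(\Omega)$. Therefore the commutator $C:=T_{\widetilde u}T_{\widetilde v}-T_{\widetilde v}T_{\widetilde u}$ is $G$-equivariant; it commutes with every isotypic projection $Q_\varrho$ and is block diagonal for the decomposition $H^2(\Omega)=\bigoplus_{\varrho\in\widehat G}H^2(\Omega)_\varrho$.

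Next I would record the dictionary and dispose of the routine half. For each $\varrho\in\widehat G_1$ let $\Gamma_\varrho\colon H^2_\varrho(\bl\theta(\Omega))\to H^2(\Omega)_\varrho$ be the isometric isomorphism of Propositions \ref{idenh} and \ref{idenh2}; it intertwines $T_u$ with $T_{\widetilde u}|_{H^2(\Omega)_\varrho}$ and $T_v$ with $T_{\widetilde v}|_{H^2(\Omega)_\varrho}$, and hence carries $T_uT_v-T_vT_u$ to $C|_{H^2(\Omega)_\varrho}$. Consequently, if $C=0$ on $H^2(\Omega)$ then $C$ vanishes on each block and $T_uT_v=T_vT_u$ on every $H^2_\varrho$; this is the converse assertion, and it also shows that (ii) entails (i). Thus both (i) and (ii) follow once one proves the single rigidity statement: if $C$ vanishes on one one-dimensional block $H^2(\Omega)_\mu$, then $C=0$ on all of $H^2(\Omega)$.

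To attack this rigidity I would first make visible that the data governing $C$ lives on $\bl\theta(\Omega)$ and is replicated across blocks. Because $|q_\chi|^2$ is $G$-invariant for every relative invariant $q_\chi$ (the values $\chi(\sigma)$ are roots of unity), the measure attached to $\varrho\in\widehat G_1$ is $d\Theta_\varrho=w_\varrho\,\bl\theta_\ast(d\Theta)$ with $w_\varrho\circ\bl\theta=|q_\varrho|^2$, so the various $d\Theta_\varrho$ differ only by weights pulled back from $\bl\theta(\Omega)$, and all the $H^2_\varrho$ consist of the same holomorphic functions on $\bl\theta(\Omega)$ normalised by these weights. Using the isotypic projections $Q_\varrho=\tfrac{1}{|G|}\sum_\sigma\overline{\varrho(\sigma)}\,R_\sigma$ and the $G$-equivariance of $C$, I would express the matrix entries of each block of $C$ through a common family of integrals over $\partial\bl\theta(\Omega)$ built from $u,v$ and the Szeg\"o kernel of $\Omega$, and argue that the one-dimensional block $H^2(\Omega)_\mu$ already detects every such integral; the vanishing of $C|_{H^2(\Omega)_\mu}$ then forces the whole family to vanish.

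The main obstacle is the higher-dimensional isotypic blocks, which are not joined to the one-dimensional ones by tensoring with linear characters and so cannot be reached by the relative-invariant transfer that handles $\widehat G_1$. For these, Schur's lemma gives $C|_{H^2(\Omega)_\varrho}=\mathrm{id}_{V_\varrho}\otimes[A_{\widetilde u,\varrho},A_{\widetilde v,\varrho}]$, where $A_{\widetilde u,\varrho},A_{\widetilde v,\varrho}$ are the compressions of $T_u,T_v$ to the multiplicity space $\mathcal M_\varrho$, itself an analytic Hilbert module over $\mathbb C[\bl\theta(\Omega)]$; I must show that these compressed Toeplitz operators obey the very same commuting criterion on $(u,v)$ as the one-dimensional models, so that the condition extracted from $H^2(\Omega)_\mu$ propagates to every $\mathcal M_\varrho$. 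I expect to prove this first in the explicit case $\Omega=\mb D^n$ with $G=G(m,p,n)$, where $\bl\theta$, the invariants $q_\varrho$, and the factorised Szeg\"o kernel are given by Equation \eqref{sym}, and then to transfer the conclusion to an arbitrary bounded symmetric domain by invoking the $G$-invariance of its Szeg\"o kernel together with the change of variables that defines the $d\Theta_\varrho$.
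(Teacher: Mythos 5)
Your converse direction and the reduction of (i) to (ii) are sound, and they match the paper's own route: the unitary $\Gamma_\varrho^h$ of Lemma \ref{equi1} intertwines $T_u$ on $H^2_\varrho(\bl\theta(\Omega))$ with $T_{\widetilde u}|_{\mb P_\varrho(H^2(\Omega))}$, and each isotypic block reduces the commutator $C=T_{\widetilde u}T_{\widetilde v}-T_{\widetilde v}T_{\widetilde u}$, so vanishing of $C$ on $H^2(\Omega)$ immediately gives commutation on every $H^2_\varrho(\bl\theta(\Omega))$. The genuine gap is in the forward direction, exactly at the step you yourself flag as "the main obstacle": propagating the vanishing of $C$ from a single one-dimensional block $\mb P_\mu(H^2(\Omega))$ to all of $H^2(\Omega)$, including the isotypic components of degree $>1$. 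Your plan for that step --- Schur's lemma to write $C|_{\mb P_\varrho}=\mathrm{id}\otimes[A_{\widetilde u,\varrho},A_{\widetilde v,\varrho}]$, then an explicit computation for $\Omega=\mb D^n$ with $G=G(m,p,n)$, then a "transfer" to arbitrary bounded symmetric domains --- is not a proof and would not become one along that route: the explicit polydisc computation says nothing about other Cartan domains or about the exceptional reflection groups, and no mechanism is given by which "$G$-invariance of the Szeg\"o kernel plus change of variables" converts the $G(m,p,n)$ case into the general one. Likewise, your claim that the block $\mb P_\mu$ "already detects every such integral" governing the other blocks is asserted, not demonstrated.

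The missing idea, which is what the paper's Lemma \ref{zerlem} supplies and which makes all blocks (of any degree) equivalent at once, is module-theoretic rather than case-by-case: by the freeness result of \cite{Pan}, $\mb P_\varrho(\mb C[z_1,\ldots,z_n])$ is a free module over the invariant ring with a basis $\{\ell_{\varrho,i}\}_{i=1}^{(\deg\varrho)^2}$, so $\sum_{i}\ell_{\varrho,i}\cdot\mb P_{\rm tr}(H^2(\Omega))$ is dense in $\mb P_\varrho(H^2(\Omega))$; and for a $G$-invariant symbol the computation in Equation \eqref{sum} (Equation \eqref{invar1} when $\deg\varrho=1$) shows
\begin{equation*}
T_{\widetilde u}\Bigl(\sum_i \ell_{\varrho,i} f_{\varrho,i}\Bigr)=\sum_i \ell_{\varrho,i}\,\widetilde P_{\rm tr}(\widetilde u f_{\varrho,i}),\qquad f_{\varrho,i}\in \mb P_{\rm tr}(H^2(\Omega)).
\end{equation*}
Hence commutation of $T_{\widetilde u},T_{\widetilde v}$ on any one block $\mb P_\mu(H^2(\Omega))$ is equivalent (after cancelling $\ell_\mu$, which vanishes only on a null set) to the single identity $\widetilde P_{\rm tr}(\widetilde u\,\widetilde P_{\rm tr}(\widetilde v\widehat f))=\widetilde P_{\rm tr}(\widetilde v\,\widetilde P_{\rm tr}(\widetilde u\widehat f))$ for all $\widehat f\in\mb P_{\rm tr}(H^2(\Omega))$, and this identity then forces $C=0$ on the dense subspace $\sum_{\varrho\in\widehat G}\sum_i \ell_{\varrho,i}\cdot\mb P_{\rm tr}(H^2(\Omega))$ of $H^2(\Omega)$ (cf. Remark \ref{rem1}). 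This is precisely the uniform replacement for your multiplicity-space claim: the compressions $A_{\widetilde u,\varrho}$ you introduce are, on these dense subspaces, all copies of the one model operator $\widehat f\mapsto\widetilde P_{\rm tr}(\widetilde u\widehat f)$, independently of $\Omega$, $G$, and $\deg\varrho$, so no domain-specific computation or transfer argument is needed.
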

Recall that two Toeplitz operators on $H^2(\mb D)$ commute if and only if either both are analytic, or both are co-analytic, or one is a linear function of the other \cite{BH1964}. An analogous result for Toeplitz operators with bounded pluriharmonic symbols on $H^2(\mb B_n)$ can be found in \cite{Zheng98}. Combining \cite{Zheng98} and Theorem \ref{zerthm1}, we extend this conclusion to $H^2(D),$  $D$ being a proper holomorphic image of the unit ball $\mb B_n,$ cf. Theorem \ref{bncom}. 

The novelty of our work lies in the application of representation theory and the invariant theory of the groups of deck automorphisms associated with proper holomorphic maps. This approach allows us to study Toeplitz operators on the Hardy space of $\bl \theta(\Omega)$ without requiring any reference to the geometry of such domains. This framework opens up new possibilities for further research in operator theory and its connections to complex analysis. 

 \section{Preliminaries} We start this section by recalling some basic properties of proper holomorphic mappings which are of our interest.
\subsection{Proper holomorphic maps and complex reflection groups}
 Let $\Omega_1$ and $\Omega_2$ be two domains in $\mathbb C^n.$ A holomorphic map ${\bl \pi}:\Omega_1\to \Omega_2$ is said to be {\it proper} if ${\bl \pi}^{-1}(K)$ is a compact subset of $\Omega_1$ for every compact $K\subset \Omega_2.$ A proper holomorphic mapping
 $\bl \pi:\Omega_1\to \Omega_2$ is surjective and there exists a positive integer $m$ such that ${\bl \pi}:\Omega_1\setminus{\bl \pi}^{-1}({\bl \pi}(\mathcal J_{\bl \pi}))\to \Omega_2\setminus{\bl \pi}(\mathcal J_{\bl \pi})$  is a (unbranched) covering map with 
\begin{align*}
\text{cardinality of }{\bl \pi}^{-1}(w)&=m,\; \,\,w\in \Omega_2\setminus{\bl \pi}(\mathcal J_{\bl \pi}) \text{ and }\\
\text{cardinality of }{\bl \pi}^{-1}(w)&<m,\; \,\,w\in{\bl \pi}(\mathcal J_{\bl \pi}),
\end{align*}
where $\mathcal J_{\bl \pi}:=\{z\in \Omega_1:J_{\bl \pi}(z)=0\},$ $J_{\bl \pi}$ being the determinant of the complex jacobian matrix of ${\bl \pi}$ \cite[Chapter 15]{Rud1980}. We refer to $m$ as {\it the multiplicity of ${\bl \pi}$} and $\Omega_2$ as a {\it proper holomorphic image} of $\Omega_1.$

Let ${\rm Aut}(\Omega_1)$ be the group of all biholomorphic automorphisms of a domain $\Omega_1.$  An element $\sigma\in {\rm Aut}(\Omega_1)$ is called a {\it deck transformation} of the proper holomorphic mapping ${\bl \pi}:\Omega_1\to \Omega_2$ if ${\bl \pi}\circ \sigma={\bl \pi}$. The deck transformations of the proper holomorphic mapping $\bl \pi$ form a subgroup of ${\rm Aut}(\Omega_1)$ and we denote it by $\operatorname{Deck}({\bl \pi}).$ If a proper holomorphic map $ \bl \pi$ is factored by (automorphisms) $G,$ then $\operatorname{Deck}({\bl \pi})=G.$


In this article, our point of interest is the images of bounded symmetric domains under proper holomorphic mappings that are factored by automorphisms. 
E. Cartan completely classified the irreducible bounded symmetric domains (Cartan domains) in \cite{E35} (up to biholomorphisms). The list consists of four families of classical type domains and two exceptional domains of dimensions $16$ and $27.$ We collectively call them the {\it Cartan} domains. An excellent exposition on Cartan domains is due to Arazy \cite{Arazy}. 
Any bounded symmetric domain $D$ is of the form $D_1^{k_1} \times \cdots \times D_r^{k_r}$ for non-equivalent (non-biholomorphic) Cartan domains $D_i : i=1,\ldots,r.$ The unit ball with respect to the Euclidean norm in $\mb C^n,$ denoted by $\mb B_n,$ is an example of an irreducible bounded symmetric domain.  Rudin proved that every proper holomorphic mapping from $\mb B_n$ to some domain in $\mb C^n,\,n>1,$ is factored by some (automorphisms) $G$ \cite{Rud1982}. For $n>1,$ this result is extended for an irreducible bounded symmetric domain of classical type in $\mathbb C^n$ by Meschiari \cite[p. 18, Main Theorem]{Mes88}. Moreover, if $\Omega$ is a bounded symmetric domain, not necessarily irreducible and the multiplicity of $\bl \pi:\Omega \to \bl \pi(\Omega)$ is $2,$ then $\bl \pi$ is factored by (automorphisms) $G$ \cite{GZ2023}.  In each case, $G$ is either a finite complex reflection group or a conjugate to a finite complex reflection group. This  is, indeed, a general fact  for a proper holomorphic mapping factored by automorphisms, see \cite[Theorem 2.1, Theorem 2.2]{BD85}, \cite[Lemma 2.2, Theorem 2.5]{DS91}, \cite[Theorem 1.6]{Rud1982}, \cite[p. 506]{BB85}. Motivated by it, henceforth, we consider proper holomorphic mappings $\bl \pi:\Omega \to \bl \pi(\Omega),$ where $\Omega$ is a bounded symmetric domain and $\bl \pi$ is factored by (automorphisms) a finite complex reflection group  $G.$ Now we recall the definition of a complex reflection in $\mb C^n.$


\begin{defn}
{\it A complex reflection} on $\C^n$ is a linear homomorphism $\sigma: \C^n \rightarrow \C^n$ such that $\sigma$ is of finite order in $GL(n,\mb C)$ and the rank of $I_n - \sigma$ is 1, where $I_n$ is the identity operator on $\mb C^n.$ 
\end{defn}
In particular, if $\sigma$ is \textcolor{black}{of order $2$,} we call it a {\it reflection}. A group generated by complex reflections is called a {\it complex reflection group}. A complex reflection group $G$ acts on $\mb C^n$ by \bea\label{action1}\sigma \cdot z = \sigma^{-1}z \text{ for } \sigma \in G \text{ and } z \in \mb C^n.\eea 
\begin{ex}
    Let $G =\mathfrak S_n,$ the permutation group on $n$ symbols, acting on $\C^n$ by permuting  the coordinates, that is, $\sigma \cdot (z_1,\ldots,z_n) = (z_{\sigma^{-1}(1)},\ldots,z_{\sigma^{-1}(n)})$ for $\sigma \in \mathfrak{S}_n$ and $z_i\in \mb C$. The group $\mathfrak S_n$ is generated by transpositions $\{(i\,j)\}_{n \geq i>j \geq 1}$. Thus $\mathfrak S_n$ is a reflection group. One can realize $\mathfrak S_n$ in the following manner that aligns with above definition in a more appropriate way: consider the faithful representation 
\Bea
\rho: \mathfrak S_n \to GL(n, \C): (i\,j) \mapsto A_{(i\,j)},
\Eea
where $A_{(i\,j)}$ is the permutation matrix obtained by interchanging the $i$-th and the $j$-th columns of the identity matrix. 
\end{ex}

\begin{ex}\label{ex}Let $G = D_{2k} = \langle \delta, \sigma : \delta^k=\sigma^2 ={\rm Identity, \sigma \delta \sigma^{-1} = \delta^{-1}} \rangle$ be the dihedral group of order $2k.$ We define its action on $\C^2$ via the faithful representation $\rho$ defined by
\Bea
     \rho : G\to GL(2,\mb C): \delta\mapsto \begin{bmatrix}
    \zeta_k & 0\\
    0 & {\zeta_k}^{-1}
    \end{bmatrix}, \sigma\mapsto \begin{bmatrix}
    0 & 1\\
    1 & 0
    \end{bmatrix},
    \Eea
    where $\zeta_k$ denotes a primitive $k$-th root of unity. Writing the matrix representation of the group action with respect to the standard basis of $\C^2$
 we have $$G=\{\delta^j,\sigma\delta^j:j\in\{0,\ldots,k-1\}\},$$ where $\delta^j$ is a rotation having the eigenvalues ${\zeta_k}^{\pm j}$ and $\sigma\delta^j$ is a reflection having the eigenvalues $\pm 1.$  
\end{ex}

\subsubsection{Basic invariant polynomials} 
Chevalley-Shephard-Todd theorem states that  the ring of $G$-invariant polynomials in $n$ variables is equal to $\C[\theta_1,\ldots,\theta_n]$, where $\theta_i$'s are algebraically independent $G$-invariant homogeneous polynomials. These $\theta_i$'s are called basic invariant polynomials associated to $G.$ In \cite{Rud1982}, the mapping $\bl \theta:=(\theta_1,\ldots,\theta_n) : \mb C^n \to \mb C^n$ is said to be a basic polynomial mapping associated to the group $G.$ Let a  domain $\Omega \subseteq \mb C^n$ be a $G$-space, then $\bl \theta : \Omega \to \bl \theta(\Omega)$ is a proper holomorphic mapping with the deck automorphism group $G$ \cite{Rud1982, Try13}. 
In this paper, we refer to $$\bl \theta : \Omega \to \bl \theta(\Omega)$$ as a \emph{basic polynomial mapping} associated to the group $G.$ Moreover, any proper holomorphic map $\bl f : \Omega \to \Omega^\prime$ with the deck automorphism group $G$ is {\it isomorphic} to $\bl \theta$ (that is, $\bl f=\bl h \circ \bl \theta \circ \bl \psi$ for a biholomorphism $\bl h: \bl \theta(\Omega) \to \Omega^\prime$ and an automorphism $\bl \psi:\Omega \to \Omega$) and $\Omega^\prime$ is biholomorphic to $\bl \theta(\Omega)$ \cite[Proposition 4.4]{kag}.  Thus, the description of any proper holomorphic map $\bl f$ from $\Omega$ which is factored by (automorphisms) $G$ can be recovered from a basic polynomial map $\bl \theta:\Omega \to \bl \theta(\Omega)$ associated with $G$ (up to an isomorphism) and the proper image $\bl f (\Omega)$  is biholomorphic to $\bl \theta(\Omega).$ So we lose no generality if we work with a basic polynomial mapping associated to the finite complex reflection group $G$, instead of any proper holomorphic mapping factored by $G$. 

Lastly we note that the choice of a basic polynomial mapping associated to $G$ is not unique. Since any other basic polynomial mapping $\bl \theta^\prime:\Omega\to \bl \theta^\prime(\Omega)$ is isomorphic to $\bl \theta:\Omega\to\bl \theta(\Omega),$ our study is independent of the choice of $\bl \theta.$  Example \ref{symmet} explains it further.
\subsubsection{Proper holomorphic images of bounded symmetric domains}
We provide a few examples of the domains $\bl\theta(\Omega)$ on which our results are applicable. 


\begin{ex}\label{exo}
The irreducible finite complex reflection groups were classified by Shephard and Todd in \cite{ST1954}. They proved that every irreducible complex reflection group belongs to an infinite family $G(m, p, n)$ indexed by three parameters, where $ m,n,p$ are  positive integers and $p$ divides $m,$  or, is one of 34 exceptional groups. Although for certain values of $m,p$ and $n,$ $G(m,p,n)$ can be reducible, for example, $G(2,2,2)$ is the dihedral group of order $4$ which is isomorphic to the product of two cyclic groups of order $2.$ For a detailed study on $G(m, p, n),$ we refer to \cite[Chapter 2]{LT09}. 

Let $n>1.$ A set of basic invariant polynomials for the group $G(m,p,n)$ is given by elementary symmetric polynomials of $z_1^m,\ldots,z_n^m$ of degrees $1,\ldots,n-1$ and $(z_1\cdots z_n)^q,$ where $q=m/p$ \cite[p. 36]{LT09}. We denote the elementary symmetric polynomials of degree $i$ of $z_1^m,\ldots,z_n^m$ by $\theta_i(z)$ for $i=1,\ldots,n-1$ and $\theta_n(z) = (z_1\cdots z_n)^q.$ 
The group $G(m, p, n)$ has an action on $\mb D^n$ as given in Equation \eqref{action1}. Thus we have an explicit description for the basic polynomial map $\bl \theta:=(\theta_1,\ldots,\theta_n) : \mb D^n \to \bl\theta(\mb D^n)$ associated to $G(m,p,n).$ Any image of $\mb D^n$ under the proper holomorphic mapping $\bl \pi$ with $\operatorname{Deck}({\bl \pi})=G(m,p,n),\,n>1,$ is biholomorphic to  $\bl \theta(\mb D^n).$ 
\end{ex}

\begin{ex}\label{symmet} Let $\mathfrak{S}_n$ denote the permutation group on $n$ symbols and
\Bea s_i(z_1,\ldots ,z_n) = \sum_{1\leq k_1< k_2 <\ldots <k_i\leq n} z_{k_1} \cdots z_{k_i}\Eea
be the $i$-th elementary symmetric polynomial in $n$ variables. The symmetrization map 
$$\bl s := (s_1,\ldots,s_n): \mb D^n \to \bl s(\mb D^n)$$ 
is a proper holomorphic map factored by $\mathfrak{S}_n.$   The domain $\mb G_n:=\bl s(\mb D^n),$ is called   the symmetrized polydisc \cite{Cos05}. It is well-known that the permutation group $\mathfrak{S}_n$ is equal to $G(1,1,n).$
    The symmetrization map $\bl s = (s_1,\ldots,s_n): \mb D^n \to \mb G_n$ is a basic polynomial associated to $\mathfrak{S}_n$ and coincides with the map $\bl \theta$ described in Example \ref{exo} for $G(1,1,n)$.
    
    Let the power sum symmetric polynomial of degree $k$ in $n$ variables be denoted by $$p_k(z_1,\ldots,z_n) = \sum_{i=1}^n z_i^k.$$ Then $\bl \theta:=(p_1,\ldots,p_n):\mb D^n \to \bl \theta(\mb D^n)$ is also a basic polynomial map associated to $\mathfrak{S}_n.$ The domains $\bl \theta(\mb D^n)$ and the symmetrized polydisc $\mb G_n$ are biholomorphic to each other. For example, $\bl h : \mb G_2 \to \bl \theta(\mb D^2)$ is a biholomorphism given by $\bl h(s_1,s_2)=(s_1,s_1^2-2s_2).$ \end{ex}
    \begin{ex}\label{dk}
       The group $D_{2k}=G(k,k,2)$ acts on $\mb D^2$ (cf. Example \ref{ex}) and $\bl \theta :=(\theta_1,\theta_2): \mb D^2 \to \bl \theta(\mb D^2)$ is a basic polynomial map where $\theta_1(z_1,z_2)=z_1^k+z_2^k$ and $\theta_2(z_1,z_2)=z_1z_2.$ We denote the domain $\bl \theta(\mb D^2)$ by $\mathscr{D}_{2k}.$ 
    \end{ex}

\begin{ex}\label{ball}
 For positive integers $m,n>1,$ let $\mathcal E_n(m):=\{z\in\mathbb C^n:|z_1|^{2/m}+|z_2|^2+\ldots+|z_n|^2<1\}$ denote the complex ellipsoid. For fixed $n,m>1,$ the mapping $\phi_{n,m} :\mathbb B_n \to \mathcal E_n(m),$ defined by $$ \phi_{n,m}(z_1,z_2,\ldots,z_n)= (z_1^m,z_2,\ldots,z_n),$$ is a basic polynomial map associated to $\mb Z_m$ (the cyclic group of order $m$).   
\end{ex}
\begin{ex}\label{car2}
    The {\it classical Cartan domains of type $III$}, denoted by $\m R_{III}(n)$, is the set of all $n \times n$ symmetric (complex) matrices $A$ for which $\mathbb I_n-AA^*$ is positive definite \cite[p. 9]{Arazy}. Let $\m R_{III}(2)$ be the classical Cartan domain of third type of rank $2$. The proper holomorphic map $\bl \theta: \m R_{III}(2) \to \bl \theta(\m R_{III}(2))$ defined by $$\bl \theta(z_1,z_2,z_3) = (z_1,z_2,z_3^2-z_1z_2),$$ is a basic polynomial map associated to the cyclic group of order $2,$ $\mb Z_2.$ The domain $\mb E:=\bl \theta(\m R_{III}(2)),$ is called  the tetrablock \cite{AWY07}.
\end{ex}
\begin{ex}\label{lieball}  The {\it classical Cartan domains of type $IV$} (alternatively, {\it the Lie ball} $L_n$) is the following domain :
\begin{eqnarray*}
L_n:=\left\{z\in\mathbb \mathcal R_I(1\times n): \sqrt{\left(\sum_{j=1}^n|z_j|^2\right)^2-\left|\sum_{j=1}^nz_j^2\right|^2}<1-\sum_{j=1}^n|z_j|^2\right\}.
 \end{eqnarray*}
    For $n\geq 2,$ we define the proper holomorphic mapping of multiplicity $2$ by $\bl \Lambda_n : L_n \to \bl \Lambda_n (L_n):= \mathbb L_n$ for
 \begin{eqnarray*}
     \bl \Lambda_n(z_1,z_2,\ldots,z_n)=(z_1^2,z_2,\ldots,z_n).\end{eqnarray*}
This is a basic polynomial map associated to $\mb Z_2$ on $L_n.$ Moreover, we know that $L_2$ is biholomorpic to $\mathbb D^2$ and $L_3$ is biholomorphic to $\mathcal R_{III}(2).$ This leads to the observation that $\mb L_2$ is biholomorphic to the symmetrized bidisc $\mb G_2$ and $\mb L_3$ is biholomorphic to the tetrablock $\mb E$ \cite[Corollary 3.9]{GZ2023}.
\end{ex}

Proper holomorphic images of irreducible bounded symmetric domains can be described (up to biholomorphisms) using \cite[Theorem 3]{Gottschling1969}, \cite[Propostion 3.3]{GZ2023} and \cite[p. 18, Main Theorem]{Mes88}. The same course of action will not work for reducible bounded symmetric domains. For a (reducible or irreducible) bounded symmetric domain  $\Omega,$ a description for all possible  complex reflections in $\Aut(\Omega)$ is given in \cite[p. 702, Theorem 2]{Gottschling1969}. Making use of this observation,  a classification for all possible images (up to a biholomorphism) of bounded symmetric domains under a proper holomorphic mapping with multiplicity $2$ is obtained in \cite[proposition 3.6]{GZ2023}. 

\subsubsection{\v{S}ilov Boundary} We recall the definition of \v{S}ilov boundary of a domain from \cite{Gamelin1969}.
\begin{defn}
The \v{S}ilov boundary $\partial \Omega$ of a bounded domain $\Omega$ is given by the
closure of the set of its peak points and a point $w \in \overbar{\Omega}$ is said to be a peak point of $\Omega$ if there exists a function $f \in \m A(\Omega)$ such that $|f(w)|>|f(z)|$ for all $z \in \overbar{\Omega} \setminus \{w\},$ where $\m A(\Omega)$ denotes the algebra of all functions holomorphic on $\Omega$ and continuous on $\overbar{\Omega}.$
\end{defn} 
For example, the \v{S}ilov boundary of the polydisc $\mb D^n$ is the $n$-torus $\mb T^n.$ The \v{S}ilov boundary of the unit ball $\mb B_n$ coincides with its topological boundary. Since $\bl \theta : \Omega \to \bl \theta(\Omega)$ is a proper holomorphic map which can be extended to a proper holomorphic map of the same multiplicity from $\Omega'$ to $\bl \theta(\Omega)',$ where the open sets $\Omega'$ and $\bl \theta(\Omega)'$ contain $\ov{\Omega}$ and $\ov{\bl \theta(\Omega)},$ respectively. Then \cite[p. 100, Corollary 3.2]{KZ2013} states that $\bl \theta^{-1}(\del\bl \theta(\Omega)) = \del\Omega.$ Thus \begin{eqnarray}\label{shilovboundary} \del\bl \theta(\Omega) = \bl \theta(\del\Omega).\end{eqnarray}
For instance, the \v{S}ilov boundary of the symmetrized polydisc $\bl s(\mb D^n)$ is given by $\bl s(\mb T^n).$ The \v{S}ilov boundary of $\mb L_n$ is $\Lambda_n(\del L_n)$ (cf. Example \ref{lieball}) \cite[Proposition 4.1]{GZ2023}.

\subsection{Hardy space on  bounded symmetric domains}\label{hardysubsec}  
 A notion of the Hardy space on a bounded symmetric domain $\Omega$ is given in \cite[p. 521]{Hahn-Mitchell:1969}. We reproduce it here for the sake of completeness of our exposition. Recall that $I_\Omega(0)$ denotes the isotropy subgroup of $0$ in $\Aut(\Omega).$ The group $I_\Omega(0)$ acts transitively on the \v{S}ilov boundary $\del \Omega$. There exists a unique normalised $I_\Omega(0)$-invariant measure on $\del \Omega$, say $d\Theta$. The $L^2$-space $L^2(\del \Omega):=L^2(\del \Omega,d\Theta)$ is the Hilbert space of complex measurable functions on $\del \Omega$ with the inner product $$\inner{f}{g}_{L^2} = \int_{\del \Omega}f(t)\ov{g(t)} d\Theta(t),\,\, f,g\in L^2(\partial\Omega).$$
The action of the group $I_\Omega(0)$  on $L^2(\del\Omega)$ is given by $\sigma(f)(z)= f(\sigma^{-1} \cdot z)$ for $\sigma \in I_\Omega(0)$ and $f \in L^2(\del\Omega).$ Since the measure $d\Theta$ is $I_\Omega(0)$-invariant,  for any $\sigma \in I_\Omega(0),$ it follows that
\bea \label{uni}  \inner{\sigma(f)}{\sigma(g)} &=& \int_{\del \Omega}f(\sigma^{-1}\cdot t)\ov{g(\sigma^{-1}\cdot t)} d\Theta(t)=\int_{\del \Omega}f(t)\ov{g(t)} d\Theta(t)=\inner{f}{g}.\eea

Let $\m O(\Omega)$ denote the algebra of holomorphic functions on $\Omega.$ The Hardy space $H^2(\Omega)$ is defined by 
\Bea H^2(\Omega) : = \{f\in \m O(\Omega) :\Vert f\Vert_{H^2}:= \sup_{0 < r <1} \left( \int_{\del \Omega}|f(rt)|^2 d\Theta(t)\right)^{1/2} < \infty \} \Eea 
in \cite{Hahn-Mitchell:1969}. For every function $f\in H^2(\Omega),$ its radial limit $\widetilde{f}$ exists almost everywhere (with respect to $\Theta$) on $\del\Omega,\,\,  \widetilde f\in L^2(\del \Omega)$ and $\norm{f}_{H^2} = \Vert\widetilde{f}\Vert_{L^2}$ \cite[p. 126]{Up96}. We identify $f$ and $\widetilde{f},$ henceforth,  no distinction will be made between these two realizations. Let $\widetilde{P}$ be the orthogonal projection of $L^2(\del \Omega)$ onto $H^2(\Omega).$ Thus, there is an embedding of $H^2(\Omega)$ into $L^2(\del \Omega)$ as a closed subspace \cite{Koranyi1965}, \cite[p. 526, Theorem 6]{Hahn-Mitchell:1969}.   Moreover, $H^2(\Omega)$ is a reproducing kernel Hilbert space and its reproducing kernel $S_\Omega$  is referred as the Szeg\"o kernel of $\Omega.$   
For every $w \in \Omega,$ the holomorphic function $S_\Omega(\cdot,w)$ is in $H^2(\Omega).$ Following \cite[p. 126]{Up96}, we have 
\bea\label{main} (\widetilde{P}f)(z) = \inner{f}{S_\Omega(\cdot,z)}_{L^2}\eea 
for every $f \in L^2(\del \Omega) ,\ \Omega$ being an irreducible bounded symmetric domain. A proof can be found in \cite[Section 2.9]{Up96}, see also \cite{FK90}.  
Further, if $\Omega=\prod_{i=1}^n\Omega_i,$ where each $\Omega_i$ is an irreducible bounded symmetric domain, then $H^2(\Omega)$ can be naturally identified with $\otimes_{i=1}^nH^2(\Omega_i).$ The Szeg\"o kernel $S_\Omega$ of $H^2(\Omega)$ is taken to be the reproducing kernel $\prod_{i=1}^n S_{\Omega_i}$ of 
$\otimes_{i=1}^nH^2(\Omega_i),$ that is,
\Bea
S_\Omega(z,w)=\prod_{i=1}^nS_{\Omega_i}(z_i, w_i),
\Eea
where $z_i,w_i\in\Omega_i$ for $i=1,\ldots,n$ and  Equation \eqref{main} holds.

\subsection{Toeplitz Operators}For $u \in L^\infty(\del\Omega),$ the Laurent operator $M_{u} : L^2(\del\Omega) \to L^2(\del\Omega)$ and the Toeplitz operator $T_{u}: H^2(\Omega)\to H^2(\Omega)$  are defined by
$$M_{u}f=u f  \text{~and~}  T_{u} = \widetilde{P}M_{u},$$ respectively,  $\widetilde{P}$ being the orthogonal projection from $L^2(\del\Omega)$ onto  $H^2(\Omega).$ From Equation \eqref{main}, we have 
\bea\label{toepo} (T_{u}f)(z) = \inner{uf}{S_\Omega(\cdot,z)}_{L^2}.\eea
 We prove a lemma, one of whose immediate consequence is the fact that the linear map $u\mapsto T_u$ from $L^\infty(\partial\Omega)$ into $\m B(H^2(\Omega))$ is isometric, $\m B(H^2(\Omega))$ being the algebra of all bounded operators on $H^2(\Omega).$ We follow the ideas from the proof of \cite[Theorem 2.1]{DJ77}.

\begin{lem}\label{irr}
    Suppose that $\Omega$ is an irreducible bounded symmetric domain in $\mb C^n$ and  the Toeplitz operator $T_u$ is invertible in $\m B(H^2(\Omega)).$   Then $u$ is invertible in $L^\infty(\del\Omega).$
\end{lem}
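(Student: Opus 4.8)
The plan is to show that invertibility of $T_u$ forces $|u|$ to be bounded away from $0$ almost everywhere on $\del\Omega$, which is exactly the condition for $u$ to be invertible in $L^\infty(\del\Omega).$ First I would extract a quantitative lower bound from invertibility: since $T_u$ is invertible in $\m B(H^2(\Omega)),$ there is a constant $c>0$ with $\norm{T_uf}_{H^2}\geq c\norm{f}_{H^2}$ for every $f\in H^2(\Omega)$ (one may take $c=\norm{T_u^{-1}}^{-1}$). Because $\widetilde P$ is an orthogonal projection and $T_uf=\widetilde P(uf),$ we have $\norm{T_uf}_{H^2}\leq\norm{uf}_{L^2};$ combined with $\norm{f}_{H^2}=\norm{f}_{L^2}$ this gives
\[
\int_{\del\Omega}|u(t)|^2\,|f(t)|^2\,d\Theta(t)=\norm{uf}_{L^2}^2\;\geq\; c^2\norm{f}_{L^2}^2=c^2\int_{\del\Omega}|f(t)|^2\,d\Theta(t)
\]
for every $f\in H^2(\Omega).$

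The second step is to convert this integral inequality into a pointwise bound by testing it against the Szeg\"o kernel. Taking $f=S_\Omega(\cdot,w)$ for $w\in\Omega$ and using the reproducing property $\norm{S_\Omega(\cdot,w)}_{H^2}^2=S_\Omega(w,w),$ the inequality becomes
\[
\frac{1}{S_\Omega(w,w)}\int_{\del\Omega}|u(t)|^2\,|S_\Omega(t,w)|^2\,d\Theta(t)\;\geq\; c^2,\qquad w\in\Omega.
\]
The left-hand side is the Poisson--Szeg\"o integral of $|u|^2$ evaluated at $w,$ the relevant kernel $|S_\Omega(t,w)|^2/S_\Omega(w,w)$ being a probability density on $\del\Omega$ with respect to $d\Theta.$ Letting $w\to\zeta\in\del\Omega$ admissibly and invoking the Fatou-type boundary-value theorem for the Poisson--Szeg\"o kernel on an irreducible bounded symmetric domain \cite{Koranyi1965}, the left-hand side converges to $|u(\zeta)|^2$ for almost every $\zeta\in\del\Omega.$ Hence $|u(\zeta)|^2\geq c^2$ almost everywhere, so $1/u=\bar u/|u|^2\in L^\infty(\del\Omega)$ with $\norm{1/u}_\infty\leq 1/c;$ that is, $u$ is invertible in $L^\infty(\del\Omega).$

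The routine parts are the projection estimate and the reproducing-kernel substitution; the step carrying the real content is the passage from the everywhere lower bound on the Poisson--Szeg\"o integral to the almost-everywhere lower bound on $|u|.$ This is precisely where the irreducibility of $\Omega$ is used, since it is for the Cartan domains that the normalized kernels $|S_\Omega(\cdot,w)|^2/S_\Omega(w,w)\,d\Theta$ form an approximate identity concentrating at Šilov boundary points and the associated Poisson--Szeg\"o transform admits admissible boundary limits almost everywhere. I expect this to be the main obstacle, to be handled by quoting the harmonic-analytic results of Kor\'anyi and the boundary-value theory recorded in \cite{Koranyi1965, Hahn-Mitchell:1969, Up96}. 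I would also remark that only the lower bound $|u|\geq c$ is needed, so the full force of invertibility---in particular that $T_{\bar u}=T_u^*$ is bounded below---is not actually required, merely that $T_u$ is bounded below.
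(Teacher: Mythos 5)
Your proof is correct, but it follows a genuinely different route from the paper's. Both arguments share the first step (invertibility gives $\norm{T_uf}\geq c\norm{f}$, and the projection estimate turns this into $\int|u|^2|f|^2\,d\Theta\geq c^2\int|f|^2\,d\Theta$ for all $f\in H^2(\Omega)$), but they diverge in how they localize this integral inequality to a pointwise bound. The paper, following Davie--Jewell \cite{DJ77}, tests against the explicit peak functions $f_k(z)=(1+\langle z,\xi\rangle)^k$: the strict peak property of $\xi\mapsto|1+\langle z,\xi\rangle|$ on the \v{S}ilov boundary, together with the $I_\Omega(0)$-invariance of $d\Theta$, makes $|1+\langle z,\xi\rangle|^{2k}/a_k\,d\Theta$ an approximate identity against \emph{continuous} test functions; a Fubini argument then yields $\int|u|^2g\,d\Theta\geq\epsilon^2\int g\,d\Theta$ for every positive continuous $g$, hence $|u|\geq\epsilon$ a.e. This is elementary and self-contained. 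You instead test against the normalized Szeg\"o kernels $S_\Omega(\cdot,w)/S_\Omega(w,w)^{1/2}$, recognize the resulting expression as the Poisson--Szeg\"o integral of $|u|^2$, and then import Kor\'anyi's Fatou-type theorem to pass to almost-everywhere boundary values; since $|u|^2\in L^\infty$, even radial convergence a.e.\ suffices, so the citation carries the load. Your route is shorter and conceptually cleaner, at the cost of resting on a deep boundary-convergence theorem rather than a hands-on computation; the paper's route needs nothing beyond the invariance of $d\Theta$ and the peak-point geometry of the inner product on $\del\Omega$. One small correction to your closing commentary: the restricted-admissible form of Kor\'anyi's theorem is valid for all bounded symmetric domains, not just irreducible ones, so irreducibility is not really "used" at that step; in the paper's own proof irreducibility enters through the invariance and peak-point structure of $\langle z,\xi\rangle$ on the \v{S}ilov boundary. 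Your observation that only a lower bound $\norm{T_uf}\geq c\norm{f}$ (not full invertibility) is needed is accurate and matches what the paper actually uses.
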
 
\begin{proof}
    Let  $h(z,w):=\inner{z}{w}$ and $\psi$ be a non-negative measurable function on $\mb C^n.$ For $z\in \del\Omega,$ let $F(\xi)=\psi(h(z,\xi))$ for $\xi\in\del\Omega.$   Since $d\Theta$ is $ I_\Omega(0)$-invariant,  $\int_{\del \Omega} F(\xi) d\Theta(\xi)$ is independent of $z.$ For $k\geq 1,$ let $$a_k = \int_{\del \Omega} |1+h(z,\xi)|^{2k} d\Theta(\xi)$$ and note that $a_k$ is independent  of $z.$

We observe that for a fixed $z\in \del\Omega,$  the function $h(z,w)$ has the only peak point at $w=z$ in the \v{S}ilov boundary of $\Omega.$ That is, $h(z,z)=1$ for $z\in \del\Omega$  and  
$$  |h(z,w)|<1 \text{~~for every~~}w \in \del\Omega,\,\ w\neq z. $$ So there exists a neighbourhood $U$  of $z$ in $\del\Omega$ such that 
    $$\frac{1}{a_k} \int_{\del \Omega\setminus U} |1+h(z,\xi)|^{2k} d\Theta(\xi) \rightarrow 0 \text{ as } k \rightarrow \infty.$$
     It follows that 
    $$\frac{1}{a_k}\int_{\del \Omega} g(\xi)|1+h(z,\xi)|^{2k} d\Theta(\xi) \rightarrow g(z) \text{~~as~~} k \rightarrow \infty$$ for a continuous function $g$ on $\del\Omega.$ Since $T_u$ is invertible, there exists an $\epsilon>0$ such that 
   
   $$\norm{T_uf}\geq\epsilon \norm{f} \text{~~for every~~} f \in H^2(\Omega).$$
   In particular, for $f_k(z)=(1+h(z,\xi))^{k}$ this gives $\norm{T_uf_k}^2\geq \epsilon^2 a_k.$ For any positive valued continuous function $g,$ it follows that $$\frac{1}{a_k} \int_{\del \Omega}\int_{\del \Omega} |u(z)|^2 g(\xi) |1+h(z,\xi)|^{2k} d\Theta(\xi)d\Theta(z)\geq \epsilon^2 \int_{\del \Omega} g(\xi) d\Theta(\xi).$$ 
   An application of Fubini's theorem yields
   $$\int_{\del \Omega} |u(z)|^2 g(z) d\Theta(z)\geq \epsilon^2  \int_{\del \Omega} g(\xi) d\Theta(\xi).$$
   Since this inequality holds for every positive continuous function $g,$ we have $|u(z)|^2\geq \epsilon^2$ almost everywhere on $\del\Omega.$ This completes the proof. 
\end{proof}
As a consequence, we have the following corollary. A proof along the line of \cite{Douglas98} is included for the sake of completeness.
\begin{cor}\label{irrcor}
If $\Omega$ is an irreducible bounded symmetric domain, then $u\mapsto T_u$ is a $*$-linear isometry of $L^\infty(\del\Omega)$ into $\m B(H^2(\Omega)).$
\end{cor}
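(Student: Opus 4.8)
The plan is to verify, in turn, the three ingredients making up the claim — linearity, the $*$-property, and the isometry — with essentially all the content concentrated in the isometry, which will rest entirely on Lemma \ref{irr}. Linearity of $u\mapsto T_u$ is immediate from the linearity of $u\mapsto M_u$ and of the projection $\widetilde P$. For the $*$-property, I would argue by a direct inner-product computation: for $f,g\in H^2(\Omega)$, using that $\widetilde P$ is self-adjoint and fixes $H^2(\Omega)$,
$$\inner{T_u f}{g}_{L^2}=\inner{M_u f}{g}_{L^2}=\inner{uf}{g}_{L^2}=\inner{f}{\bar u g}_{L^2}=\inner{f}{\widetilde P M_{\bar u} g}_{L^2}=\inner{f}{T_{\bar u}g}_{L^2},$$
so that $T_u^*=T_{\bar u}$, which is exactly $*$-linearity.

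For the isometry I would first dispose of the easy inequality: since $\widetilde P$ is a contraction and $\norm{M_u}=\norm{u}_\infty$ as a multiplication operator on $L^2(\del\Omega)$, we get $\norm{T_u}=\norm{\widetilde P M_u}\le \norm{u}_\infty$. The reverse inequality $\norm{T_u}\ge\norm{u}_\infty$ is the crux and is where Lemma \ref{irr} is used. The key step is the spectral inclusion $\sigma(M_u)\subseteq\sigma(T_u)$, where $\sigma$ denotes the spectrum. Indeed, fix $\lambda\in\mathbb C$ and note $T_u-\lambda I=T_{u-\lambda}$ (since $T_1=\widetilde P=I$ on $H^2(\Omega)$); if this were invertible in $\m B(H^2(\Omega))$, then Lemma \ref{irr} would force $u-\lambda$ to be invertible in $L^\infty(\del\Omega)$, i.e. $\lambda$ would lie outside the essential range of $u$, which is precisely $\sigma(M_u)$. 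Taking contrapositives yields $\sigma(M_u)\subseteq\sigma(T_u)$.

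To finish, I would invoke normality: $M_u$ is a normal operator on $L^2(\del\Omega)$, so its spectral radius equals its norm, giving $\sup_{\lambda\in\sigma(M_u)}|\lambda|=\norm{M_u}=\norm{u}_\infty$. Combining the inclusion with the universal bound that the spectral radius of $T_u$ is at most $\norm{T_u}$,
$$\norm{u}_\infty=\sup_{\lambda\in\sigma(M_u)}|\lambda|\le\sup_{\lambda\in\sigma(T_u)}|\lambda|\le\norm{T_u},$$
whence $\norm{T_u}=\norm{u}_\infty$. The main obstacle is exactly this reverse norm inequality; everything hinges on translating the invertibility conclusion of Lemma \ref{irr} into the spectral inclusion, after which normality of the Laurent operator $M_u$ supplies the remaining identity of norm and spectral radius. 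I would remark that the irreducibility hypothesis enters only through Lemma \ref{irr}.
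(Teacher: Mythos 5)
Your proposal is correct and follows essentially the same route as the paper: both use the identity $T_u-\lambda=T_{u-\lambda}$ together with Lemma \ref{irr} to obtain the spectral inclusion ${\rm Spec}(M_u)\subseteq{\rm Spec}(T_u)$, and then deduce $\norm{T_u}=\norm{u}_\infty$ from the fact that the spectrum of $M_u$ is the essential range of $u$ (your appeal to normality of $M_u$ is just a repackaging of the paper's equality $\sup\{|\lambda|:\lambda\in\m R(u)\}=\norm{u}_\infty$). The only difference is that you spell out the $*$-property computation $T_u^*=T_{\bar u}$, which the paper dismisses as trivial.
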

\begin{proof}
We only prove $\Vert T_u\Vert=\Vert u\Vert_\infty,$ as the proof of $*$-linearity is trivial.
    Since $T_u-\l=T_{u-\l}$ for $\l\in\mb C,$ it follows from Lemma \ref{irr} that ${\mr{Spec}}(M_u)\subseteq {\mr{Spec}}(T_u),$ here ${\mr{Spec}}(T)$ denotes the spectrum of $T.$ Thus, $$\m R(u)={\mr{Spec}}(M_u)\subseteq {\mr{Spec}}(T_u),$$ where $\m R(u)$ is the essential range of $u.$ Therefore,
$$ \Vert u\Vert_\infty\geq\Vert T_u\Vert\geq \text{spectral radius of}~~ T_u\geq\sup \{\vert \l\vert: \l\in \m R(u)\}=\Vert u\Vert_\infty.$$
This completes the proof.
\end{proof}

 \subsection{Orthogonal decomposition and projection operators} Let $G$ be a finite complex reflection group which is a subgroup of $\Aut(\Omega).$ Since every complex reflection fixes the origin,  $G \subset I_\Omega(0).$ 
 For  $\sigma\in G,$ the linear map $R_\sigma: L^2(\del\Omega) \to L^2(\del\Omega)$ is defined by 
 \bea\label{rsigma}R_\sigma(f) = \sigma(f)=f \circ \sigma^{-1}.\eea 
 Equation \eqref{uni} implies that each $R_\sigma$ is well-defined and the map $R: \sigma\mapsto R_\sigma$ is a unitary representation of $G$ on $L^2(\del\Omega).$
 
 Let $\widehat{G}$ denote the set of all equivalence classes of irreducible representations of $G.$ For $\varrho \in \widehat{G},$ the linear operator $\mb P_\varrho:L^2(\del\Omega) \to L^2(\del\Omega)$ defined by
\Bea \mb P_\varrho \phi = \frac{\deg\varrho}{|G|}\sum_{\sigma \in G} \chi_\varrho(\sigma^{-1})R_\sigma(\phi), \Eea 
   is an idempotent \cite[p. 24, Theorem 4.1]{MR2553682}, where $\chi_\varrho$ denotes the character of $\varrho,$ $\deg\varrho$ is the degree of the representation $\varrho$ and $|G|$ is the order of the group $G.$ In fact,  for $\sigma \in G,\,\, R_\sigma$ is a unitary by Equation \eqref{uni}, it follows that $R_\sigma^*=R_{\sigma^{-1}}.$ Moreover, for every $\varrho \in \widehat{G},$ $\ov{\chi_\varrho(\sigma^{-1})}=\chi_\varrho(\sigma)$ \cite[p. 15, Proposition 2.5]{MR2553682}. Hence $\mb P_\varrho=\mb P_\varrho^*.$ So $\mb  P_\varrho$ is an orthogonal projection for every $\varrho\in\widehat G.$ 

Since $\oplus_{\varrho \in \widehat{G}} \mb P_\varrho = I_{L^2(\del\Omega)},$ $L^2(\del\Omega)$ is an orthogonal direct sum as follows: \bea\label{l2ortho} L^2(\del\Omega) = \oplus_{\varrho \in \widehat{G}} \mb P_\varrho (L^2(\del\Omega)). \eea

\begin{ex}\label{onb}\rm  Let $\mb T=\{z\in \mb C: |z|=1\}.$ The \v{S}ilov boundary of the open unit polydisc $\mb D^n$ is the $n$-torus $\mb T^n$ and the set $\{z^{\bl \alpha}=\prod_{i=1}^n z_i^{\alpha_i} : \bl \alpha \in \mb Z^n\}$ forms an orthogonal basis for $L^2(\mb T^n).$ Recall that the permutation group $\mathfrak{S}_n$ acts on $\mb T^n$ by $\sigma \cdot (z_1,\ldots,z_n) = (z_{\sigma^{-1}(1)},\ldots,z_{\sigma^{-1}(n)})$ for $\sigma \in \mathfrak{S}_n$ and $z_i\in \mb T$. Moreover, $R_{\sigma}(z^{\bl \alpha})=\sigma(z^{\bl \alpha})= \prod_{i=1}^n z_{\sigma(i)}^{\alpha_i}=\prod_{i=1}^n z_i^{\alpha_{\sigma^{-1}(i)}}=z^{\sigma \cdot \bl \alpha}.$
\begin{itemize}[leftmargin=*]
    \item Let $\bl \beta \neq \sigma \cdot \bl \alpha$ for all $\sigma \in \mathfrak{S}_n.$ Then $\mb P_\varrho(z^{\bl \beta})$ and $\mb P_\varrho(z^{\bl \alpha})$ are mutually orthogonal.
    \item If $\bl \beta = \sigma \cdot \bl \alpha$ for some $\sigma \in \mathfrak{S}_n$ then $\mb P_\varrho z^{\bl \beta} = \chi_\varrho(\sigma) \mb P_\varrho z^{\bl \alpha}.$ Further, if $\varrho \in \widehat{\mathfrak{S}}_n$ is not equivalent to the trivial representation, there exists at least one $\sigma_0 \in \mathfrak{S}_n$ such that $\chi_\varrho(\sigma_0) \neq 1.$ Let $\bl \alpha \in \mb Z^n$ be such that $ {\sigma_0 \cdot \bl \alpha} = {\bl \alpha}.$    Then $\mb P_\varrho(z^{\bl \alpha})=\mb P_\varrho (z^{\sigma_0 \cdot \bl \alpha}) = \chi_\varrho(\sigma_0) \mb P_\varrho(z^{\bl \alpha}).$ Consequently, $\mb P_\varrho(z^{\bl \alpha})= 0.$
    \item For example, the transposition $\sigma = (1~2)$ in $\mathfrak{S}_3$ keeps the multi-index $\bl \alpha =(1,1,4)$ fixed and the character $\chi_{\rm sgn}((1~2))=-1$ (see Equation \eqref{sign} for details on the  sign representation). Therefore, $\mb P_{\rm sgn}(z^{\bl \alpha})=0.$
    \item 

For a representation $\varrho \in \widehat{\mathfrak{S}}_n,$ let $$I_\varrho:=\{\bl\alpha\in\mb Z^n: \mb P_\varrho(z^{\bl \alpha}) \neq 0 \} \text{~~and~~}  [\bl\alpha]:=\{\sigma\cdot\bl\alpha:\sigma\in\mathfrak S_n\} \text{~~for~~}\bl\alpha\in \mb Z^n.$$
    Clearly, $ \{[\bl\alpha]:\bl\alpha\in I_\varrho\}$ is a partition of $I_\varrho$ into equivalence classes, namely, the orbits of elements in $ I_\varrho$ under the action of $\mathfrak S_n.$  The subset $\{\mb P_\varrho z^{\bl \alpha} : \bl \alpha \in I_\varrho\}$ forms an orthogonal basis for $\mb P_\varrho(L^2(\mb T^n)),$ here $\bl\alpha$ stands for any representative of the orbit  $[\bl\alpha]$ of $\bl\alpha.$

\end{itemize}   
   
\end{ex}

The subspace $ H^2(\Omega)\subset L^2(\del\Omega)$ is left invariant by $R_\sigma.$ Its restriction to $ H^2(\Omega),$ also denoted by $R_\sigma,$ is a unitary operator on $ H^2(\Omega). $
Thus, the map $R: \sigma\mapsto R_\sigma$ is a unitary representation of $G$ on $H^2(\Omega).$ 
For every $\varrho \in \widehat{G},$ the linear map $\mb P_\varrho:H^2(\Omega) \to H^2(\Omega),$ defined by
\Bea \mb P_\varrho \phi = \frac{\deg\varrho}{|G|}\sum_{\sigma \in G} \chi_\varrho(\sigma^{-1})R_\sigma(\phi), \Eea 
is an orthogonal projection onto the isotypic component associated to the irreducible representation $\varrho$ in the decomposition of the regular representation of $G$ on $H^2(\Omega)$ \cite[p. 24, Theorem 4.1]{MR2553682} \cite[Corollary 4.2]{BDGS22} and
\bea \label{deco} H^2(\Omega) = \oplus_{\varrho \in \widehat{G}} \mb P_\varrho (H^2(\Omega)). \eea 
Moreover, $\mb P_\varrho(H^2(\Omega))$ is a closed subspace of $H^2(\Omega)$ and the reproducing kernel $S_\varrho$ of $\mb P_\varrho(H^2(\Omega))$ is given by 
\bea \label{repvar}S_\varrho(z, w) =(\mb P_\varrho S_\Omega)(z,w)=\frac{1}{|G|}\sum_{\sigma \in G} \chi_{\varrho}(\sigma^{-1}) S_\Omega(\sigma^{-1} \cdot z, w).\eea \begin{rem}\label{prho} We  emphasize that such an orthogonal decomposition of $H^2(\Omega)$ in Equation \eqref{deco} is possible here since the measure $d\Theta$ is $G$-invariant. In the sequel, we show that each $\mb P_\varrho (H^2(\Omega))$ is isometrically isomorphic to some reproducing kernel Hilbert space on $\bl \theta(\Omega)$ and whence define a notion of Hardy space on $\bl \theta(\Omega).$ Clearly, this approach may not work in general. \end{rem} 
For  $f \in \mb P_\varrho(L^2(\del\Omega)),$ it follows from Equation 
\eqref{main} that $$(\widetilde{P}f)(z) = \inner{f}{S_\Omega(\cdot,z)}_{L^2}=\inner{\mb P_\varrho f}{S_\Omega(\cdot,z)}_{L^2}=\inner{f}{S_{\varrho}(\cdot,z)}_{L^2}.$$ Hence $\widetilde{P}f \in \mb P_\varrho(H^2(\Omega)).$ Let $\widetilde{P}_\varrho :\mb P_\varrho(L^2(\del\Omega)) \to \mb P_\varrho(H^2(\Omega))$ be the orthogonal projection. We note that $\widetilde{P}_\varrho=\widetilde{P}\mb P_\varrho$ and thus 
$$(\widetilde{P}_\varrho f)(z) = \inner{f}{S_{\varrho}(\cdot,z)}_{L^2}.$$
If $u\in L^\infty(\del\Omega)$ is $G$-invariant and  $f \in \mb P_\varrho(H^2(\Omega)),$ then   $u f \in \mb P_\varrho(L^2(\del\Omega))$ and
\bea \label{subToep}(T_{u}f)(z)= (\widetilde{P}(M_{u}f))(z) = \inner{uf}{S_\Omega(\cdot,z)}_{L^2}=  \inner{uf}{S_{\varrho}(\cdot,z)}_{L^2}=\widetilde{P}_\varrho(uf).\eea

\subsubsection{One-dimensional representations}
Since the one-dimensional representations of $G$ play an important role in our discussion, we elaborate on some relevant results for the same. We denote the set of equivalence classes of the one-dimensional representations of $G$ by $\widehat{G}_1.$

A hyperplane $H$ in $\C^n$ is called reflecting if there exists a complex reflection in $G$ acting trivially on $H.$ For a complex reflection $\sigma \in G,$ let $H_{\sigma} := \ker(I_n - \sigma).$ By definition, the subspace $H_{\sigma}$ has dimension $n-1.$ Clearly, $\sigma$ fixes the hyperplane $H_{\sigma}$ pointwise. Hence each $H_\sigma$ is a reflecting hyperplane.  By definition, $H_\sigma$ is the zero set of a non-zero homogeneous linear polynomial $L_\sigma$ on $\C^n$, determined up to a non-zero constant multiple, that is, 
$$H_\sigma = \{z\in\C^n: L_\sigma(z) = 0\}.$$ 
Moreover, the elements of $G$ acting trivially on a  reflecting hyperplane forms a cyclic subgroup of $G.$

 Let $H_1,\ldots, H_t$ be the distinct reflecting hyperplanes associated to the group $G$ and  the corresponding cyclic subgroups be $G_1,\ldots, G_t,$ respectively. Suppose $G_i = \langle a_i \rangle$ and the order of each $a_i$ is $m_i$ for $i=1,\ldots,t.$ For every one-dimensional representation $\varrho$ of $G,$ there exists a unique $t$-tuple of non-negative integers $(c_1,\ldots,c_t),$ where $c_i$'s are the least non-negative integers that satisfy the following: \begin{eqnarray}\label{ci}\varrho(a_i) =\big( \det(a_i)\big)^{c_i}, \,\, i=1,\ldots,t.\end{eqnarray} The $t$-tuple $(c_1,\ldots,c_t)$ solely depends on the representation $\varrho.$ 

For $\varrho \in \widehat{G}_1,$ the character of $\varrho,$ $\chi_\varrho : G \to \mb C^*$ coincides with the representation $\varrho.$ The set of elements of $H^2(\Omega)$ relative to the one-dimensional representation $\varrho$ is given by 
\begin{eqnarray}\label{invar} R^G_{\varrho}(H^2(\Omega)) = \{f \in H^2(\Omega) : \sigma(f) = \chi_\varrho(\sigma) f ~ {\rm for ~~ all~} \sigma \in G\}.\end{eqnarray}  
The elements of the subspace $R^G_{\varrho}(H^2(\Omega))$ are said to be  $\varrho$-invariant functions. We recall a lemma concerning the $\varrho$-invariant functions which is going to be useful in the sequel. 
\begin{lem}\label{gencz}\cite{kag}
Suppose that the linear polynomial $\ell_i$ is a defining function of $H_i$ for $i=1,\ldots,t$ and   
$$\ell_\varrho = \prod_{i=1}^t \ell_i^{c_i }$$ is a homogeneous polynomial, where $c_i$'s are unique non-negative integers as described in Equation \eqref{ci}. Any element $f \in R^G_{\varrho}(H^2(\Omega))$ can be written as $f = \ell_\varrho \cdot (\widetilde{f} \circ \bl \theta)$ for a holomorphic function $\widetilde{f}$ on $\bl \theta(\Omega).$ 
\end{lem}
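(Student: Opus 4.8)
The plan is to separate the statement into two independent assertions: first, that $\ell_\varrho$ divides $f$ in $\m O(\Omega)$ with a $G$-invariant quotient $g:=f/\ell_\varrho$; and second, that every $G$-invariant holomorphic function on $\Omega$ is of the form $\widetilde f\circ\bl\theta$. Only the holomorphy of $f$ on $\Omega$ is used, so the Hardy-space structure plays no role; I treat $f$ simply as an element of $\m O(\Omega)$.

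First I would fix a reflecting hyperplane $H_i$ with its cyclic stabilizer $G_i=\langle a_i\rangle$ of order $m_i$, and analyze the vanishing of $f$ along $H_i$. Choosing linear coordinates $(w_1,\ldots,w_n)$ adapted to $a_i$, with $w_1=\ell_i$, so that $a_i$ multiplies $w_1$ by a primitive $m_i$-th root of unity $\zeta_i=\det a_i$ and fixes $w_2,\ldots,w_n$, I expand $f=\sum_{k\ge 0}a_k(w_2,\ldots,w_n)\,w_1^{\,k}$ near a generic point of $H_i\cap\Omega$. The hypothesis $R_{a_i}f=\chi_\varrho(a_i)f$ from \eqref{invar}, together with $R_{a_i}(w_1^{\,k})=\zeta_i^{\,k}w_1^{\,k}$, forces $a_k\equiv 0$ whenever $\zeta_i^{\,k}\ne\chi_\varrho(a_i)=(\det a_i)^{c_i}=\zeta_i^{\,c_i}$, i.e. whenever $k\not\equiv c_i\pmod{m_i}$. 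Since $0\le c_i<m_i$, the lowest surviving power is $w_1^{\,c_i}$, so $f$ vanishes to order at least $c_i$ along $H_i$ and $\ell_i^{\,c_i}$ divides $f$ locally. As the $\ell_i$ are pairwise non-associate irreducible linear forms and the local rings of $\m O(\Omega)$ are unique factorization domains, divisibility by each $\ell_i^{\,c_i}$ yields divisibility by the product $\ell_\varrho=\prod_i\ell_i^{\,c_i}$; equivalently $g=f/\ell_\varrho$ is holomorphic off $\bigcup_i H_i$ and locally bounded across each smooth hypersurface $H_i$, so Riemann's removable singularity theorem, applied also across the codimension $\ge 2$ intersection loci, gives $g\in\m O(\Omega)$.

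Next I would record that $\ell_\varrho$ is itself a $\varrho$-relative invariant. The same eigenvalue computation gives $R_{a_i}\ell_i=\det(a_i)\,\ell_i$, hence $R_{a_i}(\ell_i^{\,c_i})=\chi_\varrho(a_i)\,\ell_i^{\,c_i}$ by \eqref{ci}; since the exponents $c_i$ are constant on $G$-orbits of reflecting hyperplanes, checking on the generating reflections yields $R_\sigma\ell_\varrho=\chi_\varrho(\sigma)\,\ell_\varrho$ for every $\sigma\in G$, which is the Springer--Stanley description of the generator of the module of $\varrho$-semi-invariants. Combining this with $R_\sigma f=\chi_\varrho(\sigma)f$ gives $R_\sigma g=R_\sigma f/R_\sigma\ell_\varrho=g$, so $g$ is $G$-invariant. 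Finally I would descend $g$ through $\bl\theta$: because $\bl\theta:\Omega\to\bl\theta(\Omega)$ is proper holomorphic with deck group $G$ and fibres $\bl\theta^{-1}\bl\theta(z)=Gz$, the $G$-invariance of $g$ makes $\widetilde f(w):=g(z)$ (any $z\in\bl\theta^{-1}(w)$) well-defined on the domain $\bl\theta(\Omega)\subseteq\mb C^n$; it is holomorphic off the branch locus $\bl\theta(\m J_{\bl\theta})$, where $\bl\theta$ is a local biholomorphism, continuous and locally bounded everywhere, and hence extends holomorphically across that proper analytic set by Riemann extension. Thus $\widetilde f\in\m O(\bl\theta(\Omega))$ and $f=\ell_\varrho\cdot(\widetilde f\circ\bl\theta)$.

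I expect the main obstacle to lie in the second paragraph: pinning down the exact order of vanishing of $f$ forced by the character and then patching the purely local divisibilities into a global factorization by $\ell_\varrho$, in particular controlling the loci where several hyperplanes meet. By contrast, the transformation law for $\ell_\varrho$ is the classical relative-invariant computation, and the descent step is routine normality and removable-singularity bookkeeping.
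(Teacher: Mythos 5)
Your proposal cannot be compared against an argument in the text, because the paper does not prove this lemma at all: it is imported verbatim from \cite{kag}, and in substance it is the analytic Chevalley--Shephard--Todd theorem (\cite[Theorem 3.2, Theorem 3.12]{BDGS22}, which the paper invokes later in the proof of Lemma \ref{repker}) specialized to a one-dimensional representation $\varrho$. What you have written is a correct, self-contained reconstruction of the standard proof of that cited result, and its three steps are exactly the classical ingredients: the eigenvalue argument of Stanley forcing every local expansion of $f$ transverse to $H_i$ to contain only powers $w_1^k$ with $k\equiv c_i \pmod{m_i}$, hence divisibility of $f$ by $\prod_i\ell_i^{c_i}$ after Riemann extension across the hyperplanes and their codimension-two intersections; the semi-invariance $R_\sigma\ell_\varrho=\chi_\varrho(\sigma)\ell_\varrho$, which makes $g=f/\ell_\varrho$ genuinely $G$-invariant rather than merely holomorphic; and the descent of $g$ through the proper map $\bl\theta$, holomorphic off the branch locus $\bl\theta(\m J_{\bl\theta})$ and extended across it by Riemann's theorem. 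You are also right that the Hardy-space structure is irrelevant and only holomorphy of $f$ enters; this is consistent with how the paper uses the lemma. Your sign conventions agree with the paper's: \eqref{ci} together with \eqref{sign} and Corollary \ref{Jac} (where $c_i=m_i-1$ recovers $\ell_{\rm sgn}=J_{\bl\theta}$) confirms that the surviving exponents are $k\equiv c_i$, not $k\equiv -c_i$.

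One step deserves more care than you give it. In your second paragraph, semi-invariance of $\ell_\varrho$ is not purely a matter of the exponents $c_i$ being constant on $G$-orbits of hyperplanes (which does hold, since $\varrho$ and $\det$ are conjugation-invariant): for a reflection $\sigma$ with hyperplane $H_i$, the operator $R_\sigma$ permutes the remaining linear forms $\ell_j$, $j\neq i$, only up to nonzero scalars, so a priori $R_\sigma\ell_\varrho=\mu\,\det(\sigma)^{c_i}\ell_\varrho$ with an undetermined scalar $\mu$ coming from the permuted factors. One must check $\mu=1$; this follows, for instance, by restricting to $H_i$, where $R_\sigma$ acts as the identity and $\prod_{j\neq i}\ell_j^{c_j}$ does not vanish identically, or simply by citing Stanley's theorem on relative invariants as a black box, as you half do. This is a closable gap rather than an error, but as written the phrase ``checking on the generating reflections yields'' hides precisely the computation that needs to be done.
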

  The \emph{sign representation} of a finite complex reflection group $G,$ $\sgn : G \to \mb C^*,$ is defined by \cite[p. 139, Remark (1)]{MR460484} \begin{eqnarray}\label{sign} \sgn(\sigma) = (\det(\sigma))^{-1},\,\,\,\,\, \sigma\in G.\end{eqnarray} Moreover, we note from Equation \eqref{ci} that $${\rm sgn} (a_i) = (\det(a_i))^{-1}= \big(\det(a_i)\big)^{m_i-1},\,\, i=1,\ldots,t,$$
  which implies the following corollary of Lemma \ref{gencz}. 

\begin{cor}\cite[p. 616, Lemma]{MR117285}\label{Jac}
Let $H_1,\ldots, H_t$ denote the distinct reflecting hyperplanes associated to the group $G$ and let $m_1,\ldots, m_t$ be the orders of the corresponding cyclic subgroups $G_1,\ldots, G_t,$ respectively. Then 
\begin{eqnarray*}
 \ell_{\rm sgn}(z) = J_{\bl \theta} (z) = c \prod_{i=1}^t \ell_i^{m_i -1 }(z) ,
\end{eqnarray*}
where $J_{\bl \theta}$ is the determinant of the complex jacobian matrix of the basic polynomial map $\bl \theta$ and $c$ is a non-zero constant.
\end{cor}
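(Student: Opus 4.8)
The plan is to break the asserted chain $\ell_{\sgn}=J_{\bl\theta}=c\prod_{i=1}^t\ell_i^{m_i-1}$ into two independent parts: first the identity $\ell_{\sgn}=\prod_{i=1}^t\ell_i^{m_i-1}$, which is pure bookkeeping on top of Lemma \ref{gencz}; and second the proportionality $J_{\bl\theta}=c\,\ell_{\sgn}$, for which I would show that $J_{\bl\theta}$ is a $\sgn$-invariant polynomial and then match degrees. For the first part I would pin down the exponents $c_i$ of Equation \eqref{ci} for $\varrho=\sgn$. Each generator $a_i$ of $G_i=\langle a_i\rangle$ is a complex reflection of order $m_i$ fixing $H_i$ pointwise, hence it acts on the transverse line by a primitive $m_i$-th root of unity $\zeta_i$, so $\det(a_i)=\zeta_i$. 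Therefore $\sgn(a_i)=(\det a_i)^{-1}=\zeta_i^{-1}=\zeta_i^{m_i-1}=(\det a_i)^{m_i-1}$, and since $c_i$ is the least non-negative integer with $\sgn(a_i)=(\det a_i)^{c_i}$ and necessarily lies in $\{0,1,\dots,m_i-1\}$, we get $c_i=m_i-1$. Feeding this into Lemma \ref{gencz} gives $\ell_{\sgn}=\prod_{i=1}^t\ell_i^{m_i-1}$ at once.

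Next I would establish that $J_{\bl\theta}$ is $\sgn$-invariant. Starting from the $G$-invariance $\theta_j(\sigma z)=\theta_j(z)$ (valid for all $\sigma\in G$, as $G$ acts linearly), the chain rule yields $D\bl\theta(\sigma z)\,\sigma=D\bl\theta(z)$, and passing to determinants gives $J_{\bl\theta}(\sigma z)\det(\sigma)=J_{\bl\theta}(z)$, that is, $J_{\bl\theta}(\sigma z)=(\det\sigma)^{-1}J_{\bl\theta}(z)$. Under the function-space action $\sigma(f)(z)=f(\sigma^{-1}\cdot z)=f(\sigma z)$ this says $\sigma(J_{\bl\theta})=\sgn(\sigma)\,J_{\bl\theta}$, so $J_{\bl\theta}\in R^G_{\sgn}(H^2(\Omega))$ (note $J_{\bl\theta}$ is a polynomial, hence bounded on $\Omega$ and thus in $H^2(\Omega)$). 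Lemma \ref{gencz} then produces a holomorphic $\widetilde g$ on $\bl\theta(\Omega)$ with $J_{\bl\theta}=\ell_{\sgn}\cdot(\widetilde g\circ\bl\theta)$.

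Finally I would close with a homogeneity count. Both $J_{\bl\theta}$ and $\ell_{\sgn}$ are homogeneous polynomials; here $\deg\ell_{\sgn}=\sum_{i=1}^t(m_i-1)$ is precisely the number of reflections in $G$ (each $G_i$ contributes its $m_i-1$ non-identity elements), whereas $\deg J_{\bl\theta}=\sum_{j=1}^n(d_j-1)$, where $d_1,\dots,d_n$ are the degrees of the basic invariants $\theta_1,\dots,\theta_n$. The classical Shephard--Todd identity $\sum_{j=1}^n(d_j-1)=\#\{\text{reflections in }G\}$ makes these two degrees equal. Since $J_{\bl\theta}\not\equiv 0$ (the basic polynomial map is a branched covering, so its Jacobian is generically nonzero) and $J_{\bl\theta}=\ell_{\sgn}\cdot(\widetilde g\circ\bl\theta)$, equality of degrees forces $\widetilde g\circ\bl\theta$ to be a nonzero constant $c$, whence $J_{\bl\theta}=c\,\ell_{\sgn}=c\prod_{i=1}^t\ell_i^{m_i-1}$.

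I expect the genuine obstacle to be the degree-matching input $\sum_j(d_j-1)=\#\{\text{reflections}\}$: it is the one substantive structural fact, and without it one cannot exclude a nonconstant factor of $\widetilde g$ pulled back through $\bl\theta$. An alternative route that sidesteps quoting this identity would be to compute hyperplane-by-hyperplane the exact order of vanishing of $J_{\bl\theta}$ along each $H_i$ (namely $m_i-1$) and to verify that $J_{\bl\theta}$ vanishes nowhere off $\bigcup_i H_i$; this recovers the factorization directly but demands a local analysis of the branch locus of $\bl\theta$, which is more delicate than the degree comparison above.
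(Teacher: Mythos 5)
Your proposal is correct, but it does more work than the paper and along a genuinely different route for half of the statement. The first half of your argument --- computing that $\det(a_i)$ is a primitive $m_i$-th root of unity, so $\sgn(a_i)=(\det a_i)^{-1}=(\det a_i)^{m_i-1}$, hence $c_i=m_i-1$ in Equation \eqref{ci} and $\ell_{\sgn}=\prod_{i=1}^t\ell_i^{m_i-1}$ by the very definition of $\ell_\varrho$ in Lemma \ref{gencz} --- is exactly what the paper does; this is the entire content of the paper's derivation. For the remaining identity $J_{\bl\theta}=c\prod_{i=1}^t\ell_i^{m_i-1}$, the paper proves nothing at all: it simply cites Steinberg's classical lemma (the reference [MR117285, p.\ 616]), which is why the statement is labelled a corollary with a citation attached. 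You instead reprove Steinberg's lemma inside the paper's own framework: the chain-rule computation showing $\sigma(J_{\bl\theta})=\sgn(\sigma)J_{\bl\theta}$, the factorization $J_{\bl\theta}=\ell_{\sgn}\cdot(\widetilde g\circ\bl\theta)$ from Lemma \ref{gencz}, and the degree count $\deg J_{\bl\theta}=\sum_j(d_j-1)=\#\{\text{reflections}\}=\sum_i(m_i-1)=\deg\ell_{\sgn}$, which together with homogeneity (expand $\widetilde g\circ\bl\theta$ into homogeneous components and compare degrees) forces $\widetilde g\circ\bl\theta$ to be a nonzero constant. This argument is sound: the Shephard--Todd identity you invoke is classical, is provable by Molien-series methods independent of the Jacobian factorization, so there is no circularity, and your identification of $\sum_i(m_i-1)$ with the number of reflections is correct since the reflections in $G$ are precisely the non-identity elements of the cyclic groups $G_i$. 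What your approach buys is a self-contained proof (modulo one standard numerical identity) that stays within the machinery the paper has already set up; what the paper's approach buys is brevity, outsourcing the analytic content to the literature. Your closing remark correctly identifies the degree identity as the one substantive input, and your suggested alternative (computing the order of vanishing of $J_{\bl\theta}$ along each $H_i$) is in fact the shape of Steinberg's original argument.
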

Generalizing the notion of a relative invariant subspace, defined in Equation \eqref{invar}, we define the relative invariant subspace of $L^2(\del\Omega)$ associated to a one-dimensional representation $\varrho$ of $G,$ by
\Bea
R^G_{\varrho}(L^2(\del\Omega)) = \{f \in L^2(\del\Omega) : \sigma(f)  = \chi_\varrho(\sigma) f ~ { \rm a.e. ~~ for ~~ all~} \sigma \in G \}.
\Eea

\begin{rem}\label{rem3}
    We note that for every $\varrho \in \widehat{G}_1,$ 
    \begin{enumerate}[leftmargin=*]
        \item[\rm 1.] $R^G_{\varrho}(L^2(\del\Omega))=\mb P_{\varrho}(L^2(\del\Omega)).$ Since $\ell_\varrho$ vanishes only on a set of measure zero,  any $f \in \mb P_\varrho(L^2(\del\Omega))$ can be written as $f = \widehat{f} \ell_\varrho,$ where $\widehat{f} = \frac{f}{\ell_\varrho}.$ Clearly, $\widehat{f}$ is $G$-invariant. Hence we write $\widehat{f} = \widehat{f}_1 \circ \bl \theta$ for some function on $\bl \theta(\Omega)$ using  analogous argument as in \cite[Remark 2.2]{GN23}.
        \item[\rm 2.] Also, $R^G_{\varrho}(H^2(\Omega))=\mb P_{\varrho}(H^2(\Omega))$ \cite[Lemma 3.1]{kag}. 
    \end{enumerate}
\end{rem}
\section{The Hardy space} 
Let $\Omega$ be a bounded symmetric domain and a $G$-space for a finite complex reflection group $G.$ We define a notion of Hardy space on $\bl \theta(\Omega)$ motivated by \cite{Misra-SSR-Zhang}, $\bl\theta$ being a basic polynomial mapping associated to the group $G.$

For $\varrho \in \widehat{G}_1,$ let $c_\varrho$ denote the norm of the polynomial $\ell_\varrho$ (cf. Lemma \ref{gencz}) in $H^2(\Omega).$ By Lemma \ref{gencz} and Remark \ref{rem3}, each $g \in \mb P_{\varrho}(H^2(\Omega))$ can be written as $g=\frac{1}{c_\varrho} \ell_\varrho\cdot ( \widehat{g}\circ \bl \theta)$ for a unique holomorphic function $\widehat{g}$ on $\bl \theta(\Omega).$ 
Let $\widehat{\Gamma}_\varrho : \mb P_{\varrho}(H^2(\Omega)) \to \m O(\bl \theta(\Omega))$ be defined by 
\Bea\widehat{\Gamma}_\varrho g=\widehat{g}.
\Eea
Let $H_\varrho^2(\bl \theta(\Omega)):=\widehat{\Gamma}_\varrho(\mb P_{\varrho}(H^2(\Omega))).$ 
Since $\widehat{\Gamma}_\varrho$ is linear and injective by construction, $H_\varrho^2(\bl \theta(\Omega))$ can be made into a Hilbert space by borrowing the inner product from $H^2(\Omega),$ that is,
\Bea
 \inner{\widehat{h}}{\widehat{g}}_{H_\varrho^2(\bl \theta(\Omega))}=\langle \widehat{\Gamma}_\varrho h, \widehat{\Gamma}_\varrho g\rangle_{H_\varrho^2(\bl \theta(\Omega))}:=\langle h, g\rangle_{H^2(\Omega)} \,\, \text{for all} \,\, h, g\in\mb P_{\varrho}(H^2(\Omega)).
 \Eea
This makes the map $\widehat{\Gamma}_\varrho : \mb P_{\varrho}(H^2(\Omega)) \to H_\varrho^2(\bl \theta(\Omega)),$ a unitary. Thus, for a holomorphic function $f:\bl \theta(\Omega) \to \mb C,$ \bea\label{norm1}\norm{f}_\varrho^2:=\inner{f}{f}_{H^2_\varrho(\bl\theta(\Omega))}=\frac{1}{c^2_\varrho}\Big( {\rm sup}_{0<r<1} \int_{\del\Omega} |(f\circ \bl \theta)(rt)|^2 |\ell_\varrho(rt)|^2 d\Theta(t)\Big).\eea 
Clearly, $\norm{1}_\varrho=1.$ If $\Omega=\mb D^n,$ then for our choices of $\ell_\sgn \equiv J_{\bl\theta}$ and $\ell_{\rm tr}\equiv 1,$ we get $c_\sgn = \sqrt{|G|}$ and $c_{\rm tr} =1.$ Since for every non-zero constant $c,\,\, c \ell_\varrho$ will satisfy Lemma \ref{gencz}, so one can adjust $c_\varrho$ accordingly and consider it always to be equal to $\sqrt{|G|}$ (with an appropriate moderation in the choice of $\ell_\varrho$).

In summary, associated to each one-dimensional representation $\varrho$ of $G,$  the Hilbert space $H_\varrho^2(\bl \theta(\Omega))$ is defined as follows:
        \Bea H_\varrho^2(\bl \theta(\Omega)):=\{f:\bl \theta(\Omega) \to \mb C \text{ holomorphic and }\norm{f}_\varrho<\infty\}.\Eea The Hilbert space $H^2_\sgn(\bl \theta(\Omega))$ associated to the sign representation of $G$ is defined to be the Hardy space on $\bl \theta(\Omega)$ and is denoted  by $H^2(\bl \theta(\Omega)).$ \begin{defn} The Hardy space on $\bl \theta(\Omega)$ is defined by $$H^2(\bl \theta(\Omega)):=\{f:\bl \theta(\Omega) \to \mb C \text{ holomorphic and }\norm{f}_\sgn<\infty\}.$$
\end{defn} 

If $G$ is the permutation group $\mathfrak{S}_n$ and $\Omega=\mb D^n,$ this notion of the Hardy space coincides with the same in \cite{Misra-SSR-Zhang}. 

\subsection{Examples of the Hardy spaces on the proper images} In this subsection, we exhibit a number of examples of Hardy spaces on the proper images of the bounded symmetric domains. \begin{ex}{\sf (On the proper images of the unit polydisc)}
     \rm The \v{S}ilov boundary of the open unit polydisc $\mb D^n$ is the $n$-torus $\mb T^n,$ where $\mb T=\{z\in \mb C: |z|=1\}.$
    Let $d\Theta$ be the normalized Lebesgue measure on the torus $\mb T^n.$ 
Associated to each one-dimensional representation $\varrho$ of $G,$ the reproducing kernel Hilbert space $H_\varrho^2(\bl \theta(\mb D^n))$ is defined  as follows \cite[Section 2.2]{GG23}: 
$$H_\varrho^2(\bl \theta(\mb D^n)):= \{f\in\O(\bl\theta(\mb D^n):{\rm sup}_{0<r<1} \int_{\mb T^n} |(f\circ \bl \theta)(re^{i\Theta})|^2 |\ell_\varrho(re^{i\Theta})|^2 d\Theta < \infty\}.$$ 
This is a Hilbert space with the norm 
\Bea {\norm{f}}_\varrho =\frac{1}{c_\varrho}\Big( {\rm sup}_{0<r<1} \int_{\mb T^n} |(f\circ \bl \theta)(re^{i\Theta})|^2 |\ell_\varrho(re^{i\Theta})|^2 d\Theta\Big)^{\frac{1}{2}}.\Eea 
\begin{enumerate}[leftmargin=*]
    \item We refer to $H^2_\sgn(\bl \theta(\mb D^n))$ associated to the sign representation of $G$ as the Hardy space on $\bl\theta(\mb D^n)$ and denote it by $H^2(\bl \theta(\mb D^n)).$  \item For the sign representation of the permutation group $\mathfrak{S}_n,$ this notion of the Hardy space $H^2(\mb G_n)$ on the symmetrized polydisc coincides with the same in \cite{Misra-SSR-Zhang}.
    \item Recall from Example \ref{dk} that $D_{2k}$ acts on $\mb D^2$. The number of one-dimensional representations of the dihedral group $D_{2k}$ in $\widehat{D}_{2k}$ is $2$ if $k$ is odd and $4$ if $k$ is even. Clearly, for every $k \in \mb N$ the trivial representation of $D_{2k}$ and the sign representation of $D_{2k}$ are in $\widehat{D}_{2k}.$ Since for the trivial representation we can choose $\ell_{\rm tr}\equiv 1,$ so $c_{\rm tr}=1$ in the formula of the norm of $H_{\rm tr}^2(\mathscr{D}_{2k}).$
    \item For the sign representation, we have $\ell_{\sgn}(\bl z)= k(z_1^k -z_2^k).$ Hence $c_\sgn^2=2k^2$ in the formula of the norm of $H^2(\mathscr{D}_{2k}).$
    \item Let $k=2j$ for some $j\in \mb N.$ We consider the representation $\varrho_1$ defined by
\begin{eqnarray*} \varrho_1(\delta) = -1 &\text{~and~}& \varrho_1(\tau) =1 \text{~for~} \tau \in \inner{\delta^2}{\sigma}.
\end{eqnarray*}
It is known that (see \cite{kag}) $\ell_{\varrho_1}(\bl z) = z_1^{j} + z_2^{j}.$ Hence $c_{\varrho_1}^2=2$ in the formula of the norm of $H_{\varrho_1}^2(\mathscr{D}_{2k}).$
\item The representation $\varrho_2$ is defined as following:
\begin{eqnarray*}
\varrho_2(\delta) = -1 &\text{~and~}& \varrho_2(\tau) =1 \text{~for~} \tau \in \inner{\delta^2}{\delta\sigma}.\end{eqnarray*}
In this case, $\ell_{\varrho_2}(\bl z) = z_1^{j} - z_2^{j}$ and $c_{\varrho_2}^2=2.$
\end{enumerate}

\end{ex}

\begin{ex} {\sf (On the proper images of the unit ball)} \rm Recall that there exists $\bl \theta(\mb B_n)$ which is biholomorphic to $\mathcal E_n(m),$ cf. Example \ref{ball}. The Hardy space $H^2(\mathcal E_n(m))$ is defined as follows: 
$$H^2(\mathcal E_n(m)):=\{f\in\m O(\mathcal E_n(m)):{\rm sup}_{0<r<1} \int_{\mb S_n} m^2 |(f\circ \bl \theta)(rt)|^2 |rt_1|^{2(m-1)} d\sigma(t) < \infty\},$$ where $d\sigma$ is the normalized rotation invariant measure on the unit sphere $\mb S_n=\{(z_1,\ldots,z_n) \in \mb C^n: \sum_{i=1}^n |z_i|^2 =1\}.$ The norm of $f \in H^2(\mathcal E_n(m))$ is given  by 
\Bea{\norm{f}}_\sgn=\frac{1}{c_{m,n}} \Big({\rm sup}_{0<r<1} \int_{\mb S_n} m^2 |(f\circ \bl \theta)(rt)|^2 |rt_1|^{2(m-1)} d\sigma(t)\Big)^{\frac{1}{2}}.\Eea Since the representation is the sign representation, $c_{m,n}$ depends only on the multiplicity of the proper map $m$ and the dimension of the unit ball $n.$ For instance, $c_{m,2}=1$ and $c_{m,3}=2/(m+1)$ for every natural number $m.$ 
\end{ex}
\begin{ex}{\sf (On the tetrablock)} \rm We define Hardy space on $\mb L_3$ which is biholomorphic to the tetrablock. The domain $\mb L_3$ is a proper holomorphic image of the Lie ball $L_3,$ cf. Example \ref{lieball} and \cite{GZ2023}. The \v{S}ilov boundary of $L_3$ is given by $$\partial L_3:=\{\omega x: \omega \in \mb T \text{ and } x=(x_1,x_2,x_3)\in\mathbb R^3,x_1^2+x_2^2+x_3^2=1\}.$$ The Hardy space $H^2(\mb L_3)$ is defined as follows: 
$$H^2(\mb L_3):=\{f\in\m O(\mb L_3):{\rm sup}_{0<r<1} \int_{\partial L_3} |(f\circ \bl \theta)(rt)|^2 |rt_1|^2 d\sigma(t) < \infty\},$$ where $d\sigma$ is the normalized rotation invariant measure on $\partial L_3.$ The norm of $f \in H^2(\mb L_3)$ is given by 
\Bea{\norm{f}}_\sgn= \Big({\rm sup}_{0<r<1} \int_{\partial L_3} |(f\circ \bl \theta)(rt)|^2 |rt_1|^2 d\sigma(t)\Big)^{\frac{1}{2}}.\Eea Here one can see that $c_\sgn =2.$ \end{ex}

We note that the linear map  $\Gamma_{\varrho}^h : H^2_\varrho(\bl \theta(\Omega))  \to \mb P_{\varrho}(H^2(\Omega))$ defined by \bea\label{gamo}\Gamma_{\varrho}^h f = \frac{1}{c_\varrho} \ell_\varrho\cdot( f \circ \bl \theta),\eea
 is the adjoint of the unitary map $\widehat{\Gamma}_\varrho$ in Equation \eqref{uni}.   Due to its crucial role in the sequel, it is worth noting as the following lemma.  



 \begin{lem}\label{suriso}
 For every $\varrho \in \widehat{G}_1,$ the linear map  $\Gamma_{\varrho}^h : H^2_{\varrho}(\bl \theta(\Omega))  \to \mb P_{\varrho}(H^2(\Omega))$ is a  unitary operator. \end{lem}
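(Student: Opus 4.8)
The plan is to recognize that $\Gamma_\varrho^h$ is precisely the inverse of the map $\widehat{\Gamma}_\varrho : \mb P_\varrho(H^2(\Omega)) \to H^2_\varrho(\bl\theta(\Omega))$, which was already shown to be unitary when the inner product on $H^2_\varrho(\bl\theta(\Omega))$ was set up to make $\widehat{\Gamma}_\varrho$ norm-preserving and surjective. Since the inverse of a unitary operator is again unitary (indeed $\Gamma_\varrho^h = \widehat{\Gamma}_\varrho^{-1} = \widehat{\Gamma}_\varrho^{*}$), the conclusion will follow once the two maps are identified as mutually inverse. Everything therefore reduces to two bookkeeping checks: well-definedness of $\Gamma_\varrho^h$ and the inverse relations.

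First I would verify that $\Gamma_\varrho^h$ genuinely maps into $\mb P_\varrho(H^2(\Omega))$. For $f\in H^2_\varrho(\bl\theta(\Omega))$ there are two points. For the \emph{norm}, the defining formula \eqref{norm1} gives at once
$\norm{\Gamma_\varrho^h f}_{H^2(\Omega)}^2 = \frac{1}{c_\varrho^2}\,\sup_{0<r<1}\int_{\del\Omega}|f(\bl\theta(rt))|^2\,|\ell_\varrho(rt)|^2\,d\Theta(t) = \norm{f}_\varrho^2 < \infty$,
so $\frac{1}{c_\varrho}\ell_\varrho\cdot(f\circ\bl\theta)\in H^2(\Omega)$ and, moreover, $\Gamma_\varrho^h$ is automatically isometric. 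For the \emph{relative invariance}, I would note that $\bl\theta$ is $G$-invariant, hence $f\circ\bl\theta$ is $G$-invariant, while $\ell_\varrho$ satisfies $\sigma(\ell_\varrho)=\chi_\varrho(\sigma)\ell_\varrho$ (this is Lemma \ref{gencz} applied with $f=\ell_\varrho$, i.e. $\ell_\varrho\in R^G_\varrho(H^2(\Omega))$). Consequently the product $\frac{1}{c_\varrho}\ell_\varrho\cdot(f\circ\bl\theta)$ transforms by $\chi_\varrho$, placing it in $R^G_\varrho(H^2(\Omega)) = \mb P_\varrho(H^2(\Omega))$ by Remark \ref{rem3}(2).

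Next I would establish that $\Gamma_\varrho^h$ and $\widehat{\Gamma}_\varrho$ are mutually inverse. For $g\in\mb P_\varrho(H^2(\Omega))$, Lemma \ref{gencz} together with Remark \ref{rem3} yields the unique factorization $g=\frac{1}{c_\varrho}\ell_\varrho\cdot(\widehat g\circ\bl\theta)$ with $\widehat g=\widehat{\Gamma}_\varrho g$; applying $\Gamma_\varrho^h$ via \eqref{gamo} recovers $\Gamma_\varrho^h\widehat g = \frac{1}{c_\varrho}\ell_\varrho\cdot(\widehat g\circ\bl\theta)=g$, so $\Gamma_\varrho^h\circ\widehat{\Gamma}_\varrho=\mathrm{id}$. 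Conversely, for $f\in H^2_\varrho(\bl\theta(\Omega))$ the element $\Gamma_\varrho^h f$ is by construction the member of $\mb P_\varrho(H^2(\Omega))$ whose associated holomorphic function on $\bl\theta(\Omega)$ equals $f$, whence $\widehat{\Gamma}_\varrho\circ\Gamma_\varrho^h=\mathrm{id}$. Thus $\Gamma_\varrho^h=\widehat{\Gamma}_\varrho^{-1}$ is unitary.

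The only step requiring genuine care is the well-definedness, and within it specifically the $\varrho$-relative invariance of $\ell_\varrho$; everything else is supplied for free by earlier results. The finite-norm condition is handled automatically by \eqref{norm1}, which was designed to make $\Gamma_\varrho^h$ norm-preserving, and both surjectivity and injectivity rest entirely on the uniqueness of the factorization $g=\frac{1}{c_\varrho}\ell_\varrho\cdot(\widehat g\circ\bl\theta)$ guaranteed by Lemma \ref{gencz} and Remark \ref{rem3}, so no independent argument is needed there.
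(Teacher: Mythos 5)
Your proposal is correct and takes essentially the same approach as the paper: there, $\Gamma_\varrho^h$ is introduced precisely as the adjoint (equivalently, the inverse) of the map $\widehat{\Gamma}_\varrho$, which is unitary by the very construction of the inner product on $H^2_\varrho(\bl\theta(\Omega))$, and the lemma is recorded without further argument. One small caveat: your justification of $\sigma(\ell_\varrho)=\chi_\varrho(\sigma)\,\ell_\varrho$ by ``applying Lemma \ref{gencz} with $f=\ell_\varrho$'' is circular, since that lemma takes $\ell_\varrho\in R^G_\varrho(H^2(\Omega))$ as its hypothesis; the relative invariance of $\ell_\varrho=\prod_i\ell_i^{c_i}$ is the standard invariant-theoretic fact underlying Lemma \ref{gencz} and Remark \ref{rem3}, not a consequence of them.
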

In the following lemma, it is shown that $H^2_\varrho(\bl\theta(\Omega)$ is a reproducing kernel Hilbert space for every $\varrho\in\widehat G_1.$
\begin{lem}\label{repker}
For every fixed $w\in \Omega,$  there is a holomorphic function $S_{\varrho,\bl\theta}(\cdot,\bl\theta(w)) \in H^2_\varrho(\bl \theta(\Omega))$ such that $\Gamma_{\varrho}^h : H^2_\varrho(\bl \theta(\Omega))  \to \mb P_{\varrho}(H^2(\Omega))$ satisfies
$$\Gamma_{\varrho}^h : S_{\varrho,\bl\theta}(\cdot,\bl\theta(w)) \mapsto c_\varrho \frac{S_\varrho(\cdot,w)}{\overbar{\ell_\varrho(w)}},$$  where $S_\varrho$ is the reproducing kernel of $\mb P_{\varrho}(H^2(\Omega))$ and $c_\varrho = \norm{\ell_\varrho}_{H^2(\Omega)}.$   Moreover, the function $S_{\varrho,\bl\theta}: \bl\theta(\Omega)\times\bl\theta(\Omega) \to \mb C$ is the reproducing kernel for $H^2_\varrho(\bl \theta(\Omega)).$ 

\end{lem}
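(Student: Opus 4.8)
The plan is to transport the reproducing kernel $S_\varrho$ of $\mb P_\varrho(H^2(\Omega))$ across the unitary $\Gamma_\varrho^h$ of Lemma \ref{suriso}, whose inverse and adjoint is the unitary $\widehat{\Gamma}_\varrho$. First I would fix $w\in\Omega$ off the reflecting hyperplanes, so that $\ell_\varrho(w)\neq 0$; such $w$ form a dense open subset of $\Omega$ since $\ell_\varrho$ vanishes only on a proper analytic set. Unwinding the definition \eqref{gamo} gives $(\Gamma_\varrho^h f)(w)=\tfrac{1}{c_\varrho}\ell_\varrho(w)\,f(\bl\theta(w))$ for every $f\in H^2_\varrho(\bl\theta(\Omega))$, hence
\[
f(\bl\theta(w))=\frac{c_\varrho}{\ell_\varrho(w)}\,(\Gamma_\varrho^h f)(w)=\frac{c_\varrho}{\ell_\varrho(w)}\inner{\Gamma_\varrho^h f}{S_\varrho(\cdot,w)}=\frac{c_\varrho}{\ell_\varrho(w)}\inner{f}{\widehat{\Gamma}_\varrho S_\varrho(\cdot,w)},
\]
where the middle equality is the reproducing property of $S_\varrho$ on $\mb P_\varrho(H^2(\Omega))$ and the last uses unitarity of $\Gamma_\varrho^h$. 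Pulling the real scalar $c_\varrho/\ell_\varrho(w)$ into the second slot (which conjugates $\ell_\varrho(w)$) suggests the definition
\[
S_{\varrho,\bl\theta}(\cdot,\bl\theta(w)):=\frac{c_\varrho}{\overbar{\ell_\varrho(w)}}\,\widehat{\Gamma}_\varrho S_\varrho(\cdot,w)\in H^2_\varrho(\bl\theta(\Omega)),
\]
so that the display above reads $f(\bl\theta(w))=\inner{f}{S_{\varrho,\bl\theta}(\cdot,\bl\theta(w))}$, which is the reproducing identity.

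With this definition the intertwining relation is immediate: applying $\Gamma_\varrho^h$ and using $\Gamma_\varrho^h\widehat{\Gamma}_\varrho=I$ on $\mb P_\varrho(H^2(\Omega))$ gives
\[
\Gamma_\varrho^h\big(S_{\varrho,\bl\theta}(\cdot,\bl\theta(w))\big)=\frac{c_\varrho}{\overbar{\ell_\varrho(w)}}\,\Gamma_\varrho^h\widehat{\Gamma}_\varrho S_\varrho(\cdot,w)=c_\varrho\,\frac{S_\varrho(\cdot,w)}{\overbar{\ell_\varrho(w)}},
\]
which is exactly the claimed image.

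The step I expect to require the most care is showing that $S_{\varrho,\bl\theta}(\cdot,\bl\theta(w))$ depends only on the point $\bl\theta(w)$ and extends to a genuine kernel on all of $\bl\theta(\Omega)\times\bl\theta(\Omega)$. For independence of the representative I would run over the fiber $\bl\theta^{-1}(\bl\theta(w))=\{\sigma\cdot w:\sigma\in G\}$ and use the two transformation laws $\ell_\varrho(\sigma\cdot w)=\chi_\varrho(\sigma^{-1})\ell_\varrho(w)$ (from $\ell_\varrho\in R^G_\varrho(H^2(\Omega))$) and $S_\varrho(\cdot,\sigma\cdot w)=\chi_\varrho(\sigma)S_\varrho(\cdot,w)$ (from \eqref{repvar}, equivalently from $R_\sigma$ acting as the scalar $\chi_\varrho(\sigma)$ on $\mb P_\varrho(H^2(\Omega))$); together with $\overbar{\chi_\varrho(\sigma^{-1})}=\chi_\varrho(\sigma)$ these make the two scalar factors cancel, so the two expressions agree. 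Finally, although the defining formula carries an apparent pole along the zero set of $\ell_\varrho$, the norm $\|\widehat{\Gamma}_\varrho S_\varrho(\cdot,w)\|=S_\varrho(w,w)^{1/2}$ vanishes there to matching order, so that $\|S_{\varrho,\bl\theta}(\cdot,\bl\theta(w))\|$ stays locally bounded; hence the reproducing identity, valid on the dense set where $\ell_\varrho(w)\neq 0$, shows that the evaluation functionals on $H^2_\varrho(\bl\theta(\Omega))$ are locally uniformly bounded and extend continuously to every point of $\bl\theta(\Omega)$. Thus $H^2_\varrho(\bl\theta(\Omega))$ is a reproducing kernel Hilbert space and $S_{\varrho,\bl\theta}$ extends holomorphically to its reproducing kernel on $\bl\theta(\Omega)\times\bl\theta(\Omega)$.
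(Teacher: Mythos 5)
Your construction is correct, and genuinely different from the paper's, on the set where $\ell_\varrho(w)\neq 0$: the identity $f(\bl\theta(w))=\inner{f}{\tfrac{c_\varrho}{\ov{\ell_\varrho(w)}}\widehat{\Gamma}_\varrho S_\varrho(\cdot,w)}$, the intertwining computation, and the fiber-invariance check via $\ell_\varrho(\sigma\cdot w)=\chi_\varrho(\sigma^{-1})\ell_\varrho(w)$ and $S_\varrho(\cdot,\sigma\cdot w)=\chi_\varrho(\sigma)S_\varrho(\cdot,w)$ are all sound, and the fiber check is a nice substitute for the uniqueness statement the paper gets from \cite{BDGS22}. (Minor slip: $c_\varrho/\ell_\varrho(w)$ is not a \emph{real} scalar -- only $c_\varrho$ is -- but the conjugation you actually perform when moving it to the second slot is the right one.)

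The genuine gap is your final step. The claim that $\|\widehat{\Gamma}_\varrho S_\varrho(\cdot,w)\|=S_\varrho(w,w)^{1/2}$ ``vanishes to matching order'' along $\{\ell_\varrho=0\}$ -- equivalently, that $S_\varrho(w,w)/|\ell_\varrho(w)|^2$ stays locally bounded -- is precisely the hard content of the lemma, and you assert it without proof. It does not follow from what you have. Lemma \ref{gencz} plus Hermitian symmetry gives the two \emph{separate} divisibilities: $S_\varrho(z,w)=\ell_\varrho(z)h_w(\bl\theta(z))$ for each fixed $w$, and $S_\varrho(z,w)=\ov{\ell_\varrho(w)}\,\ov{g_z(\bl\theta(w))}$ for each fixed $z$ (since $w\mapsto\ov{S_\varrho(z,w)}=S_\varrho(w,z)$ lies in $\mb P_\varrho(H^2(\Omega))$). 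What is needed, however, is the \emph{joint} two-sided factorization $S_\varrho(z,w)=\ell_\varrho(z)\,\widetilde{S}_{\varrho,\bl\theta}(\bl\theta(z),\bl\theta(w))\,\ov{\ell_\varrho(w)}$ valid on all of $\Omega\times\Omega$ (Equation \eqref{kerfor}); your ``matching order'' claim is exactly its restriction to the diagonal. Passing from the separate divisibilities to the joint one means controlling the quotient $S_\varrho(z,w)/\big(\ell_\varrho(z)\ov{\ell_\varrho(w)}\big)$ near points where both factors vanish, i.e.\ extending a sesqui-holomorphic function defined only off a ``cross'' -- this requires real machinery (a removable-singularity or cross theorem), not a one-line remark. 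This is where the paper concentrates its effort: it divides once in $z$ by the analytic Chevalley--Shephard--Todd theorem of \cite{BDGS22}, then uses the Kolmogorov factorization $S_\varrho(z,w)=F(z)F(w)^*$ and the determinant formula of \cite[Lemma 3.11]{BDGS22} to show that, for each fixed $z$, the conjugated quotient, as a function of $w$, again lies in $\mb P_\varrho(H^2(\Omega))$, and only then divides a second time by $\ov{\ell_\varrho(w)}$; Remark \ref{divisible} is what then makes the map $w\mapsto S_\varrho(\cdot,w)/\ov{\ell_\varrho(w)}$ meaningful even at zeros of $\ell_\varrho$. Until you supply one of these arguments, your evaluations at points of $\bl\theta(\{\ell_\varrho=0\})$ have no bound, and the kernel is constructed only on a dense subset of $\bl\theta(\Omega)\times\bl\theta(\Omega)$, which falls short of the statement.
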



 \begin{proof}
  
  By the Kolmogorov decomposition of the reproducing kernel $S_\varrho$, there exists a function  $F:\Omega\ra \mathcal B(\mb P_{\varrho}(H^2(\Omega)), \C) \text{~~such that~~}$ $$S_\varrho(z,w) = F(z) F(w)^* \text{~~for~~} z, w \in \Omega$$ \cite[Theorem 2.62]{AMc}, where $\m B(X,Y)$ denotes the space of all bounded linear operators from $X$ into $Y.$  We note  that  $F(z)=\textnormal{ev}_{z}$ satisfies the requirement, where $\textnormal{ev}_{z}:f\mapsto f(z)$ is the evaluation functional at $z$. Thus, $F$ is a  holomorphic function from $\Omega$ into $ \mathcal B(\mb P_{\varrho}(H^2(\Omega)), \C)$ such that 
  $$F(z)h = h(z) \text{~~for~~} z \in \Omega \text{~~and~~} h \in \mb P_{\varrho}(H^2(\Omega)).$$ 
 
 For a fixed $w\in \Omega,$ the analytic version of the Chevalley-Shephard-Todd theorem in \cite[Theorem 3.2, Theorem 3.12]{BDGS22} yields the following representation of the kernel function $$S_\varrho(z,w)= \ell_\varrho(z) \widehat{S}_{\varrho,\bl\theta}(\bl \theta(z),w),$$ where $\widehat{S}_{\varrho,\bl\theta}(\bl \theta(z),w)$ is a unique  $G$-invariant holomorphic function in $z$ and is anti-holomorphic function in $w.$ 
 
Let $G=\{\alpha_i:i=1,\ldots,d\}$ and $\{p_1,\ldots,p_d\}$ be a basis of the module $\mb C[z_1,\ldots,z_n]$ over the ring $\mb C[\theta_1,\ldots,\theta_n].$ Without loss of generality, we assume that $p_1=\ell_\varrho$  and invoking \cite[Lemma 3.11]{BDGS22} write the following expression of $\widehat{S}_{\varrho,\bl\theta}(\bl \theta(z),w):$
$$\widehat{S}_{\varrho,\bl\theta}(\bl \theta(z),w) = \frac{\det \big( \Lambda^{(1)}_1 F(z) \big)}{\det \Lambda(z)} F(w)^*,$$ where $\Lambda(z)=\big(\!\!\big(\big(\alpha_i\big(p_j(z)\big)\big)\!\!\big)_{i,j=1}^d$ and $ \Lambda^{(1)}_1F(z)$ is the matrix $\Lambda(z)$ with its first column replaced by the column $\Big(\big(\!\!\big(\alpha_i(F(z))\big)\!\!\big)_{i=1}^d\Big)^{\rm tr}.$ This implies that $F_1(z)=\frac{\det \big( \Lambda^{(1)}_1 F(z) \big)}{\det \Lambda(z)}$ is in $\mathcal B(\mb P_{\varrho}(H^2(\Omega)\!), \C).$ Hence $F_1(z)^* \in \mathcal B(\C,\mb P_{\varrho}(H^2(\Omega)\!))$ and so there exists $h\in \mb P_{\varrho}(H^2(\Omega))$ satisfying $F_1(z)^*1=h.$ Thus, $$\ov{\widehat{S}_{\varrho,\bl\theta}(\bl \theta(z),w)}= F(w)F_1(z)^*1=h(w).$$  So, for a fixed $z,$ the function  $w\mapsto \ov{\widehat{S}_{\varrho,\bl\theta}(\bl \theta(z),w)}$ is in $\mb P_{\varrho}(H^2(\Omega)).$   Now another application of \cite[Theorem 3.2, Theorem 3.12]{BDGS22} to $\ov{\widehat{S}_{\varrho,\bl\theta}(\bl \theta(z),w)}$ (as a function of $w$) yields:
\bea \label{kerfor} S_\varrho(z,w)= \ell_\varrho(z) \widetilde{S}_{\varrho,\bl\theta}(\bl \theta(z),\bl \theta(w)) \ov{\ell_\varrho(w)},\eea where $\widetilde{S}_{\varrho,\bl\theta}(\bl \theta(z),\bl \theta(w))$ is unique and holomorphic function in $z,$ anti-holomorphic in $w.$  

Let $S_{\varrho,\bl\theta}(\bl \theta(z),\bl \theta(w)):=c_\varrho^2\widetilde{S}_{\varrho,\bl\theta}(\bl \theta(z),\bl \theta(w)).$ For a fixed $w,$ it follows from the definition that $\norm{S_{\varrho,\bl\theta}(\cdot,\bl \theta(w))}_\varrho<\infty.$ We complete the proof by showing the reproducing property of $S_{\varrho,\bl\theta}:\bl\theta(\Omega)\times\bl\theta(\Omega)\to\mb C.$  If $f\in H^2_\varrho(\bl \theta(\Omega)),$ then \Bea \inner{f}{ S_{\varrho,\bl\theta}(\cdot,\bl \theta(w))} &=& \inner{\Gamma_\varrho^h f}{\Gamma_\varrho^h S_{\varrho,\bl\theta}(\cdot,\bl \theta(w))}\\&=&  \inner{\frac{1}{c_\varrho}\ell_\varrho (f\circ \bl \theta)}{c_\varrho\ell_\varrho \widetilde{S}_{\varrho,\bl\theta}(\bl\theta(\cdot),\bl \theta(w))}\\&=& \inner{\ell_\varrho (f\circ \bl \theta)}{\frac {S_{\varrho}(\cdot,w)}{\ov{\ell_\varrho(w)}}}  \\&=&  f(\bl \theta(w)),  \Eea 
where the last equality follows from Equation \eqref{kerfor} and the reproducing property of $S_\varrho(\cdot,w).$ 
\end{proof}

Combining Lemma \ref{suriso} and Lemma \ref{repker}, we conclude the following result. 
\begin{prop}\label{idenh} 
For every $\varrho\in\widehat{G}_1,$ the reproducing kernel Hilbert space $H^2_\varrho(\bl \theta(\Omega))$ is isometrically isomorphic to $\mb P_{\varrho}(H^2(\Omega)).$\end{prop}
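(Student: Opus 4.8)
The plan is to assemble the proposition directly from the two lemmas that immediately precede it, since together they already exhibit the required isometric isomorphism. First I would fix $\varrho\in\widehat{G}_1$ and recall from Lemma \ref{suriso} that the linear map $\Gamma_\varrho^h : H^2_\varrho(\bl\theta(\Omega)) \to \mb P_\varrho(H^2(\Omega))$ given by $\Gamma_\varrho^h f = \tfrac{1}{c_\varrho}\ell_\varrho\cdot(f\circ\bl\theta)$ is a unitary operator. By definition of the inner product on $H^2_\varrho(\bl\theta(\Omega))$ (borrowed from $H^2(\Omega)$ via the unitary $\widehat{\Gamma}_\varrho$, whose adjoint is precisely $\Gamma_\varrho^h$), this already says that $H^2_\varrho(\bl\theta(\Omega))$ and $\mb P_\varrho(H^2(\Omega))$ are isometrically isomorphic as Hilbert spaces.

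What remains is to confirm that $H^2_\varrho(\bl\theta(\Omega))$ genuinely carries the structure of a reproducing kernel Hilbert space, so that the phrase ``reproducing kernel Hilbert space'' in the statement is justified rather than merely asserting an abstract Hilbert-space isometry. This is supplied by Lemma \ref{repker}, which produces the explicit kernel $S_{\varrho,\bl\theta}:\bl\theta(\Omega)\times\bl\theta(\Omega)\to\mb C$ and verifies the reproducing property $\inner{f}{S_{\varrho,\bl\theta}(\cdot,\bl\theta(w))}_\varrho = f(\bl\theta(w))$ for all $f\in H^2_\varrho(\bl\theta(\Omega))$. Thus evaluation at each point $\bl\theta(w)$ is a bounded linear functional, which is exactly the reproducing kernel property. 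I would therefore simply invoke Lemma \ref{repker} to declare $H^2_\varrho(\bl\theta(\Omega))$ a reproducing kernel Hilbert space.

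Putting the two together, the proof is essentially one sentence: Lemma \ref{suriso} gives the unitary $\Gamma_\varrho^h$ witnessing the isometric isomorphism, and Lemma \ref{repker} certifies that $H^2_\varrho(\bl\theta(\Omega))$ is a reproducing kernel Hilbert space via the kernel $S_{\varrho,\bl\theta}$. Since the statement holds for an arbitrary $\varrho\in\widehat{G}_1$, the conclusion follows for every one-dimensional representation.

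I do not anticipate a genuine obstacle here, because all the analytic content has been front-loaded into the preceding two lemmas; the proposition is a packaging statement. The only point requiring a word of care is the logical dependency between the inner product on $H^2_\varrho(\bl\theta(\Omega))$ and the unitarity claim: the inner product is \emph{defined} so that $\widehat{\Gamma}_\varrho$ (equivalently its adjoint $\Gamma_\varrho^h$) is unitary, so one should make explicit that this construction is consistent and that $\Gamma_\varrho^h$ is onto $\mb P_\varrho(H^2(\Omega))$ — which follows since every $g\in\mb P_\varrho(H^2(\Omega))$ can be written as $g=\tfrac{1}{c_\varrho}\ell_\varrho\cdot(\widehat{g}\circ\bl\theta)$ by Lemma \ref{gencz} and Remark \ref{rem3}, so $g=\Gamma_\varrho^h\widehat{g}$ with $\widehat{g}\in H^2_\varrho(\bl\theta(\Omega))$. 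Beyond noting this surjectivity, the proof is immediate.
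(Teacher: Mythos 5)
Your proposal is correct and follows exactly the paper's own route: the paper proves Proposition \ref{idenh} with the single sentence ``Combining Lemma \ref{suriso} and Lemma \ref{repker}, we conclude the following result,'' which is precisely your packaging of the unitary $\Gamma_\varrho^h$ from Lemma \ref{suriso} together with the reproducing kernel $S_{\varrho,\bl\theta}$ from Lemma \ref{repker}. Your additional remarks on surjectivity (via Lemma \ref{gencz} and Remark \ref{rem3}) and on the consistency of the borrowed inner product are sound and, if anything, make the dependency structure more explicit than the paper does.
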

\begin{rem}\label{divisible}
For every fixed $z\in\Omega,$ the function $w\mapsto\ov{S_\varrho(z,w)}$ is in $\mb P_\varrho(H^2(\Omega)).$ So 
$$\ov{S_\varrho(z,w)}=\ell_\varrho(w) (f_z\circ\bl\theta)(w)$$ 
for some unique $G$-invariant holomorphic function $f_z\circ \bl\theta$ on $\Omega.$ By uniqueness in \cite[Theorem 3.2, Theorem 3.12]{BDGS22}, it follows that 
    $$\ov{(f_z\circ\bl\theta)(w)}=\ell_\varrho(z) \tilde{S}_{\varrho,\bl\theta}(\bl \theta(z),\bl \theta(w)).$$ Moreover, for a fixed $w\in\Omega,$ (even if $\ov{\ell_\varrho(w)}=0$) the map $z\mapsto \ov{(f_z\circ\bl\theta)(w)}=\frac{S_\varrho(z,w)}{\ov{\ell_\varrho(w)}}$ is well-defined and holomorphic in $z\in \Omega.$ For instance, if $\Omega=\mb D^2$ and $G=\mathfrak{S}_2,$ then  $\ell_\sgn(w)=0$ at  $w=(0,0),$ whereas $$\frac{S_\sgn(z,w)}{\ov{\ell_\sgn(w)}}=\ell_\sgn(z).$$\end{rem} 
From Equation \eqref{repvar} and \eqref{kerfor}, it follows that \bea\label{kernel}  S_{\varrho,\bl \theta} (\bl \theta(z), \bl \theta(w))  =\frac{c_\varrho^2}{|G|}\frac{1}{ \ell_\varrho(z) \ov{\ell_\varrho(w)}}\sum_{\sigma \in G} \chi_{\varrho}(\sigma^{-1}) S_\Omega(\sigma^{-1} \cdot z, w),\eea 
where $S_\Omega$ is the reproducing kernel of $H^2(\Omega).$ The reproducing kernel $S_{\sgn,\bl\theta}$ of  $H^2(\bl\theta(\Omega))$ is called the Szeg\"{o} kernel of $\bl\theta(\Omega).$ Explicit formulae for the Szeg\"{o} kernels for different choices of $\Omega$ and basic polynomial maps $\bl\theta$ can obtained by appealing to Equation \eqref{kernel}  in a manner analogous to that of \cite{kag} for the case of weighted Bergman kernels. A few examples are derived in Subsection \ref{exsze}.

\begin{rem}\rm
  We would like to point out that the definition of $H^2(\bl \theta(\Omega))$ is independent of the choice of the basic polynomial map $\bl\theta$ associated  to $G.$ 
  \begin{itemize}[leftmargin=*] \item   Let $\bl\theta^\prime:\Omega \to \bl\theta^\prime(\Omega)$ be another basic polynomial mapping associated to the group $G.$ Since  there is a biholomorphic map $\bl h : \bl \theta(\Omega) \to \bl \theta^\prime(\Omega),$ that is, $\bl h \circ \bl \theta=\bl \theta^\prime,$ it follows from the chain rule and Corollary \ref{Jac}  that $J_{\bl h}(\bl \theta(z)) =c$ for all $z\in \Omega,$ where $c$ is some non-zero constant. The linear map $U:H^2(\bl \theta^\prime(\Omega)) \to H^2(\bl \theta(\Omega))$ defined by $U(f)=c\cdot (f\circ \bl h)$ is a unitary. In fact, 
  $$S_{{\rm sgn},\bl \theta} (\bl \theta(z), \bl \theta(w))= \vert c\vert^2 S_{{\rm sgn},\bl \theta^\prime} (\bl \theta^\prime(z), \bl \theta^\prime(w)) \text{~~for~~} z, w\in \Omega.$$
  Therefore, $H^2(\bl \theta^\prime(\Omega))$ and $H^2(\bl \theta(\Omega))$ are isometrically isomorphic to each other.
  
  \item Let $\varrho \in \widehat{G}_1$ be a representation that is not isomorphic to the sign representation. Following analogous arguments as above, one can show that the definition of $H^2_\varrho(\bl \theta(\Omega))$ is independent of the choice of $\bl \theta.$   By the analytic Chevalley-Shephard-Todd theorem\cite[Theorem 3.12]{BDGS22}, every element $f\in\mb P_{\varrho}(H^2(\Omega))$ can be expressed as 
  $$f=\ell_\varrho\cdot (g\circ\bl \theta^\prime) = \ell_\varrho\cdot( g\circ\bl h \circ \bl \theta).$$ 
  We note from Proposition \ref{idenh}  that $g\circ\bl h \in H^2_\varrho(\bl \theta(\Omega))$ and $g\in H^2_\varrho(\bl \theta^\prime(\Omega)).$  Since the  map $U_\varrho: H^2_\varrho(\bl \theta^\prime(\Omega)) \to H^2_\varrho(\bl \theta(\Omega))$ defined by $U_\varrho(g) = g\circ \bl h$ is a unitary, $H^2_\varrho(\bl \theta^\prime(\Omega))$ and $ H^2_\varrho(\bl \theta(\Omega))$ are isomorphically isometric.  In other words, $U_\varrho=\Gamma_2^*\Gamma_1,$ where $\Gamma_1 : H^2_\varrho(\bl \theta^\prime(\Omega)) \to \mb P_{\varrho}(H^2(\Omega))$ and $\Gamma_2 : H^2_\varrho(\bl \theta(\Omega)) \to \mb P_{\varrho}(H^2(\Omega))$ are the unitary operators in Lemma \ref{suriso}. Moreover,
  $$S_{\varrho,\bl \theta} (\bl \theta(z), \bl \theta(w))= S_{\varrho,\bl \theta^\prime} (\bl \theta^\prime(z), \bl \theta^\prime(w)) \text{~~for~~} z, w\in \Omega.$$
  \end{itemize} To eliminate any ambiguity in the two points mentioned above, we note that since we always choose $\ell_{\rm sgn} =J_{\bl \theta},$ it follows that $\norm{1}_{H^2(\bl \theta^\prime(\Omega))}=c \norm{1}_{H^2(\bl \theta(\Omega))}.$ So we had to adjust the operator $U$ with a constant to make it an isometry. However, for any other one-dimensional representation $\varrho$, we do not choose different $\ell_\varrho$'s for $H^2_\varrho(\bl \theta^\prime(\Omega))$ and $ H^2_\varrho(\bl \theta(\Omega))$, so in the description of $U_\varrho$ no adjustment is needed.
  
\end{rem}

\subsection{Formula for the Szeg\"o Kernel}\label{exsze} Let $\bl\theta=(\theta_1,
\ldots,\theta_n)$ be a basic polynomial for $G(m,p,n)$ as described in Example \ref{exo}. It is easy to see that 
$$ J_{\bl\theta}(z)=\frac{m^n}{p}(z_1z_2\cdots z_n)^{\frac{m}{p}-1}\prod_{i<j}(z_i^m-z_j^m)\text{~and~}c_\sgn=\Vert J_{\bl\theta}\Vert=\frac{m^n\sqrt{n!}}{p}.$$
Choosing $\varrho=\sgn, \,\Omega=\mb D^n$ in Equation \eqref{kernel} and recalling that $\vert G(m,p,n)\vert=\frac{m^n n!}{p}$, it follows that the Szeg\"{o} kernel for $\bl\theta(\mb D^n)$ is given by 
\Bea
&&S_{\bl\theta(\mb D^n)}\big(\bl\theta(z),\bl\theta(w)\big)\\&=&S_{\sgn,\bl\theta}\big(\bl\theta(z),\bl\theta(w)\big)\\&=&\frac{p}{m^n}\frac{s_n(z)\ov{s_n(w)}}{\theta_n(z)\ov{\theta_n(w)}\displaystyle\prod_{i<j}(z_i^m-z_j^m)(\bar w_i^m-\bar w_j^m)}\displaystyle\sum_{\sigma\in G(m,p,n)}\chi_\sgn(\sigma^{-1})S_{\mb D^n}(\sigma^{-1}\cdot z,w),
\Eea  where $S_{\mb D^n}(z,w)=\displaystyle\prod_{j=1}^n(1-z_j\bar w_j)^{-1}.$ 
\begin{enumerate}[leftmargin=*]
\item [1.] The dihedral group $D_{2k}=G(k, k,2)$ acts on $\mb D^2$ (cf. Example \ref{dk}) and 
$$\theta_1(z)=z_1^k+z_2^k,\,\,\theta_2(z)=z_1z_2,\,
J_{\bl\theta}(z)=k(z_1^k-z_2^k).$$ Recall that $\bl\theta(\mb D^2)=\mathscr D_{2k}.$ The reproducing kernel for $H^2(\mathscr D_{2k})$ is given by 

\Bea
S_{\mathscr D_{2k}}\big(\bl\theta(z),\bl\theta(w)\big)=\frac{1}{k(z_1^k-z_2^k)(\bar w_1^k-\bar w_2^k)}\sum_{\sigma\in D_{2k}}\chi_\sgn(\sigma^{-1})S_{\mb D^2}(\sigma^{-1}\cdot z, w).
\Eea

\item[2.] The group $\mathfrak{S}_n=G(1, 1,n),\,n>1$ acts on $\mb D^n$ (cf. Example \ref{symmet}) and the symmetrization map $$\bl s = (s_1,\ldots,s_n): \mb D^n \to \mb G_n$$
is a basic polynomial associated to $\mathfrak{S}_n,$ where $s_k$'s  are elementary symmetric polynomials of degree $k$ in $n$ variables, defined in Equation \eqref{sym}.  Noting that  $J_{\bl s}(z)=\prod_{i<j}(z_i-z_j)$ \cite[Lemma 10]{EZ}, it follows that the Szeg\"{o} kernel for $\mb G_n$ is given by 
 
\Bea
S_{\mb G_n}\big(\bl s(z),\bl s(w)\big)&=& \frac{1}{\displaystyle\prod_{i<j}(z_i-z_j)(\bar w_i-\bar w_j)}\sum_{\sigma\in \mathfrak S_n}\chi_\sgn(\sigma^{-1})S_{\mb D^n}(\sigma^{-1}\cdot z, w)\\
&=& \frac{1}{\displaystyle\prod_{i<j}(z_i-z_j)(\bar w_i-\bar w_j)}\sum_{\sigma\in \mathfrak S_n}\sgn(\sigma)\displaystyle\prod_{j=1}^n(1-z_j \bar w_{\sigma(j)})^{-1}\\
&=& \frac{1}{\displaystyle\prod_{i<j}(z_i-z_j)(\bar w_i-\bar w_j)}\det\bigg(\big(\!\!\big((1-z_i\bar w_j)^{-1}\big)\!\!\big)_{i,j=1}^n\bigg)\\
&=&\displaystyle\prod_{i.j=1}^n(1-z_i\bar w_j)^{-1},
\Eea  where the last equality follows from \cite[p.2367]{Misra-SSR-Zhang}.

\item[3.] Let $\Lambda:\m R_{III}(2)\to \mb C^3,\,\Lambda(z):=(z_1,z_2,z_1z_2-z_3^2),$ where $\m R_{III}(2)$ is as described in Example \ref{car2} and we identify $z=(z_1,z_2,z_3)\in\mb C^3$ with a $2\times 2$ symmetric matrix 
$\begin{bmatrix}
  z_1 & z_3 \\
  z_3 & z_2
\end{bmatrix}.$ Then $\Lambda$ is a proper holomorphic map of multiplicity $2$ which is factored by the group $\mb Z_2.$ The domain $\Lambda(\m R_{III}(2):=\mb E,$ is called the tetrablock. 

The Szeg\"{o} kernel of $\m R_{III}(2)$ is given by
$$S_{\m R_{III}(2)}(z,w)=\bigg[\det\bigg(\begin{bmatrix}
  1 & 0 \\
  0 & 1
\end{bmatrix}-\begin{bmatrix}
  z_1 & z_3 \\
  z_3 & z_2
\end{bmatrix}\begin{bmatrix}
  \bar w_1 & \bar w_3 \\
  \bar w_3 & \bar w_2
\end{bmatrix}\bigg)\bigg]^{-3/2}$$ for $z=(z_1,z_2,z_3)$ and $w=(w_1,w_2,w_3)\in\m R_{III}(2)$ \cite[p. 29]{Arazy}.
It is easy to see that $J_\Lambda(z)=-2z_3.$ The Szeg\"{o} kernel for $\mb E$ is given by 

$$S_{\mb E}\big(\Lambda(z),\Lambda(w)\big)=\frac{S_{\m R_{III}(2)}(z,w)-S_{\m R_{III}(2)}(\sigma^{-1}\cdot z,w)}{4z_3\bar w_3}$$ for  $z=(z_1,z_2,z_3),$ $\sigma^{-1}\cdot z=(z_1,z_2,-z_3)$ and $w=(w_1,w_2,w_3)\in\m R_{III}(2).$ \end{enumerate}
\begin{rem}
Note that \Bea J_{\bl\theta}(z)&=&(z_1\ldots z_n)^{\frac{m}{p}-1}\det\Big(\big(\!\!\big(z_j^{m(n-i)}\big)\!\!\big)_{i,j=1}^n\Big)\\
&=&\det\bigg(\Big(\!\!\Big(z_j^{\frac{m}{p}(p(n-i)+1)-1}\Big)\!\!\Big)_{i,j=1}^n\bigg).
\Eea
$$\Vert\det\bigg(\Big(\!\!\Big(z_j^{\frac{m}{p}(p(n-i)+1)-1}\Big)\!\!\Big)_{i,j=1}^n\bigg)\Vert^2=n!\prod_{k=1}^n\frac{(\frac{m}{p}(p(n-k)+1)-1)!}{(\lambda)_{\frac{m}{p}(p(n-k)+1)-1}}.$$
\end{rem}
\subsection{Orthonormal basis} We obtain an orthonormal basis of $H^2_\varrho(\bl \theta(\Omega))$ applying Proposition \ref{idenh}. Let $\{e_{\bl \alpha}:\bl \alpha \in \m I\}$ be an orthonormal basis for $H^2(\Omega)$  \cite{Hua1963}. 
\begin{itemize}[leftmargin=*]
\item Suppose that $\sigma \cdot \bl \alpha \in \m I$ for every $\sigma \in G,$ also,  $e_{\sigma \cdot\bl \alpha}$ and $e_{\tau \cdot\bl \alpha}$ are mutually orthogonal  whenever $\sigma \neq \tau.$ Since for every $\varrho \in \widehat{G}_1,$ $\sum_{\sigma\in G} |\chi_\varrho(\sigma)|^2=|G|,$ it follows that 
$\norm{\mb P_\varrho e_{\bl \alpha}}=\frac{1}{\sqrt{|G|}}.$ Moreover, if $\bl \beta \neq \sigma \cdot \bl \alpha$ for all $\sigma \in G,$ then $\mb P_\varrho e_{\bl \alpha}$ and $\mb P_\varrho e_{\bl \beta}$ are orthogonal to each other. 
\item If $e_{\bl \alpha}$'s are monomials and $\bl \beta = \sigma \cdot \bl \alpha$, then  $\mb P_\varrho e_{\bl \beta}=\chi_\varrho(\sigma) \mb P_\varrho e_{\bl \alpha}.$  In fact, if $\varrho\in\widehat G$ is not equivalent to the trivial representation, there exists at least one $\sigma_0\in G$ which satisfies $\chi_\varrho(\sigma_0)\neq 1.$ Let $\bl\alpha\in\m I$ be such that  $\bl \alpha=\sigma_0 \cdot \bl \alpha,$  then $\mb P_\varrho e_{\bl \alpha}=0.$

\end{itemize}
Let $\tilde{\m I}_\varrho := \{\bl \alpha \in \m I : \mb P_\varrho e_{\bl \alpha} \neq 0 \}.$ Choosing  elements from $\{\mb P_\varrho e_{\bl \alpha}:\bl \alpha \in \tilde{\m I}_\varrho\},$ we obtain an orthogonal basis of $\mb P_\varrho(H^2(\Omega)).$ We describe a scheme to make such a choice in the following examples.      

\begin{ex}
    Suppose the domain $\Omega$ is either the open unit polydisc $\mb D^n$ or the unit ball $\mb B_n$ in $\mb C^n.$ Let $\mb N_0$ be the set of all non-negative integers. For $\bl m = (m_1,\ldots,m_n) \in \mb N_0^n,$ $z^{\bl m} = \prod_{i=1}^n z_i^{m_i},$ $ z=(z_1,\ldots,z_n) \in \mb C^n.$ Note that $\{k_{\bl m}z^{\bl m} : \bl m \in \mb N_0^n \}$ forms an orthonormal basis of $H^2(\Omega),$ where $k_{\bl m}=1$ for $\mb D^n$ and $k_{\bl m}=\sqrt{\frac{(n-1+\sum_i m_i)!}{m_1!\cdots m_n!(n-1)!}}$ for $\mb B_n.$ For $\varrho \in \widehat{G}_1,$ let 
    $$\tilde{\m I}_\varrho = \{\bl m \in \mb N_0^n : \mb P_\varrho z^{\bl m} \neq 0 \} \text{~~and~~} S_{\varrho, G} = \{\sigma \in G: \chi_\varrho(\sigma)=1\}.$$
    For some $\sigma \in S_{\varrho, G}$  and $\bl m \in \tilde{\m I}_\varrho$ such that $\sigma \cdot \bl m \neq \bl m,$ we have $\mb P_\varrho z^{\sigma \cdot \bl m}=\mb P_\varrho z^{\bl m}.$ Let 
    $$[\bl m]=\{\sigma \cdot \bl m : \sigma \cdot \bl m \neq \bl m \in \tilde{\m I}_\varrho \text{ for } \sigma \in S_{\varrho, G} \} \text{~~and~~} \m I_\varrho=\{ [\bl m]:\bl m \in \tilde{\m I}_\varrho\}.$$
    Let
    $$e_{\bl m}(\bl \theta(z)) := c_\varrho \frac{\mb P_\varrho (k_{\bl m}z^{\bl m})}{\ell_\varrho(z)} \text{~~for~~} [\bl m] \in \m I_\varrho.$$ 
    It follows from Proposition \ref{idenh}  that $\{\sqrt{|G|} e_{\bl m}: [\bl m] \in \m I_\varrho\}$ is an orthonormal basis of $H_\varrho^2(\bl \theta(\Omega)).$
\end{ex} 

The index set $\m I_\varrho$ can be determined explicitly in particular cases. Suppose that $\Omega=\mb D^n$ and $G=\mathfrak{S}_n,$ then $\bl\theta(\mb D^n)$ is biholomorphic to $\mb G_n$ cf. Example \ref{symmet}. The trivial representation (tr) and the sign representation (sgn) are the only one-dimensional representations of the permutation group $\mathfrak{S}_n$. 
\begin{itemize}[leftmargin=*]
\item As per our choice of $\ell_\sgn=J_{\bl s}$ and $\ell_{\rm tr}=1,$ one gets $c_\sgn=\sqrt{n!}$ and $c_{\rm tr}=1.$ \item $\m I_\sgn = \{\bl m \in \mb N_0^n : 0\leq m_1<m_2<\cdots<m_n \}$  and for each $\bl m \in \m I_\sgn,$ 
$$ e_{\bl m}(\bl s(z)) = \frac{1}{\sqrt{n!}} \frac{\bl a_{\bl m}(z)}{\prod_{i<j}(z_i-z_j)}, \text{~~where~~} \bl a_{\bl m}(z) = \det\Big((\!(z_i^{m_j})\!)_{i,j=1}^n\Big),$$
and $\bl s$ is the symmetrization map in Equation \eqref{sym}. The set $\{\sqrt{n!}e_{\bl m} : \bl m \in \m I_\sgn \}$ forms an orthonormal basis for $H^2(\mb G_n)$ \cite{Misra-SSR-Zhang}.

\item Also, $\m I_{\rm tr} = \{\bl m \in \mb N_0^n : 0\leq m_1\leq m_2\leq\cdots\leq m_n \}$  and  $ f_{\bl m}(\bl s(z)) = \frac{1}{n!} \bl p_{\bl m}(z) $ for  $\bl m \in \m I_{\rm tr},$ where $\bl p_{\bl m}(z) = {\rm perm}\Big((\!(z_i^{m_j})\!)_{i,j=1}^n\Big),$ here {\rm perm}$A$ denotes the  the permanent of the matrix $A.$ The set $\{\sqrt{n!}f_{\bl m} : \bl m \in \m I_{\rm tr} \}$ forms an orthonormal basis for $H^2_{\rm tr}(\mb G_n)$. 
\end{itemize}

\subsection{Analytic Hilbert Module} 
Suppose $\Omega\subset \mb C^n$ is a bounded symmetric domain. It is a well-known fact that $H^2(\Omega)$ is an analytic Hilbert module over  $\mb C[z_1,\ldots,z_n].$ We observe that this rich structure can be transferred to $H^2(\bl\theta(\Omega))$ in a proper setting.

We first recall two definitions from \cite{DP}.   A Hilbert space $\mathcal H$ is said to be a {\emph Hilbert module} over an algebra $\m A$ if the map $$(f,h) \mapsto f\cdot h,\,\,\,\,f\in \m A, h\in \mathcal H,$$ defines an algebra homomorphism $f \mapsto T_f$ of $\m A$ into $\m B(\m H),$ where $\m B(\m H)$ is the algebra of bounded operators on $\m H$ and $T_f$ is the bounded operator defined by  $T_f h = f \cdot h.$ 
\begin{defn}
A Hilbert module $\m H$ (consisting of complex-valued holomorphic functions on $\Omega\subseteq\mb C^n$) over the polynomial algebra $\mb C[z_1,\ldots,z_n]$ is said to be an {\it analytic Hilbert module}  if
\begin{enumerate}
\item [(1)] $\mb C[z_1,\ldots,z_n]$  is dense in $\m H$ and
\item[(2)] $\m H$ possesses a reproducing kernel on $\Omega.$
\end{enumerate}
The module action in an analytic Hilbert module is given by pointwise multiplication, that is, for every $p\in \mb C[z_1,\ldots,z_n]$ the module action is $${\mathfrak m}_p(h)(z) = p(z) {h}(z),\, h\in \m H \text{ and } z\in \Omega.$$ 
\end{defn}

 Since $H^2(\Omega)$ is an analytic Hilbert module over the polynomial algebra, each multiplication operator $M_{\theta_i} : H^2(\Omega) \to H^2(\Omega)$ is bounded for $i=1,\ldots,n.$ Moreover, $\mb P_\varrho (H^2(\Omega))$ is a Hilbert module over the polynomial algebra $\mb C[\theta_1,\ldots,\theta_n]$ for every $\varrho\in\widehat G_1.$ 

For $i=1,\ldots,n$ and $\varrho \in \widehat{G}_1,$ let $M_i:H_\varrho^2(\bl \theta(\Omega))\to H_\varrho^2(\bl \theta(\Omega))$ be the $i$-th coordinate multiplication operator. The unitary $\Gamma_\varrho^h $ defined in Equation \eqref{gamo} intertwines $(M_1,\ldots,M_n)$ on $H_\varrho^2(\bl \theta(\Omega))$ and $(M_{\theta_1},\ldots,M_{\theta_n})$ on $\mb P_\varrho(H^2(\Omega)).$

Further, $\mb P_\varrho (\mb C[z_1,\ldots,z_n])= \ell_\varrho \cdot \mb C[\theta_1,\ldots,\theta_n]$ \cite{Pan} and is dense in $\mb P_\varrho( H^2(\Omega)).$  This implies that $\mb C[z_1,\ldots,z_n]$  is dense in $H_\varrho^2(\bl \theta(\Omega))$ and leads to the following result. 
\begin{prop}\label{idenh2}
    For every $\varrho \in \widehat{G}_1,$ the reproducing kernel Hilbert space $H_\varrho^2(\bl \theta(\Omega))$ is an analytic Hilbert module on $\bl \theta(\Omega)$ over $\mb C[z_1,\ldots,z_n].$ Moreover, $H_\varrho^2(\bl \theta(\Omega))$ over $\mb C[z_1,\ldots,z_n]$ is unitraily equivalent to the Hilbert module $\mb P_\varrho(H^2(\Omega))$ over $\mb C[\theta_1,\ldots,\theta_n].$
\end{prop}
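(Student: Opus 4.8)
The plan is to verify the two defining conditions of an analytic Hilbert module for $H_\varrho^2(\bl\theta(\Omega))$ and then to read off the unitary equivalence of modules from the intertwining property of $\Gamma_\varrho^h$ recorded just above the statement. Since $H^2(\Omega)$ is an analytic Hilbert module over $\mb C[z_1,\ldots,z_n]$, each $M_{\theta_i}$ is bounded on $H^2(\Omega)$; as $\theta_i$ is $G$-invariant, $M_{\theta_i}$ commutes with every $R_\sigma$ and hence with the projection $\mb P_\varrho$, so it restricts to a bounded operator on $\mb P_\varrho(H^2(\Omega))$. First I would invoke Lemma \ref{suriso}: the unitary $\Gamma_\varrho^h$ conjugates the $i$-th coordinate multiplication $M_i$ on $H_\varrho^2(\bl\theta(\Omega))$ to $M_{\theta_i}$ on $\mb P_\varrho(H^2(\Omega))$ (this is exactly the intertwining relation stated before the proposition). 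Consequently each $M_i$ is bounded and $p\mapsto M_p$ is a well-defined algebra homomorphism of $\mb C[z_1,\ldots,z_n]$ into $\m B(H_\varrho^2(\bl\theta(\Omega)))$, which supplies the module structure.

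For condition (2), the reproducing kernel $S_{\varrho,\bl\theta}$ of $H_\varrho^2(\bl\theta(\Omega))$ was produced in Lemma \ref{repker}, so the space already possesses a reproducing kernel on $\bl\theta(\Omega)$. For condition (1), the density of $\mb C[z_1,\ldots,z_n]$, I would transport the known density $\mb P_\varrho(\mb C[z_1,\ldots,z_n]) = \ell_\varrho\cdot\mb C[\theta_1,\ldots,\theta_n]$ in $\mb P_\varrho(H^2(\Omega))$ through the unitary $\widehat{\Gamma}_\varrho$. Concretely, $\widehat{\Gamma}_\varrho$ sends $\ell_\varrho\cdot(q\circ\bl\theta)$ to $c_\varrho\,q$ for $q\in\mb C[z_1,\ldots,z_n]$, so that $\widehat{\Gamma}_\varrho\big(\ell_\varrho\cdot\mb C[\theta_1,\ldots,\theta_n]\big)=\mb C[z_1,\ldots,z_n]$; since a unitary carries dense sets to dense sets, $\mb C[z_1,\ldots,z_n]$ is dense in $H_\varrho^2(\bl\theta(\Omega))$. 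Combined with the reproducing kernel, this establishes that $H_\varrho^2(\bl\theta(\Omega))$ is an analytic Hilbert module.

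Finally, the ``moreover'' assertion is immediate: $\Gamma_\varrho^h$ is a unitary satisfying $\Gamma_\varrho^h M_i=M_{\theta_i}\Gamma_\varrho^h$ for every $i$, so it implements a unitary equivalence between $H_\varrho^2(\bl\theta(\Omega))$ over $\mb C[z_1,\ldots,z_n]$ (with $z_i$ acting by $M_i$) and $\mb P_\varrho(H^2(\Omega))$ over $\mb C[\theta_1,\ldots,\theta_n]$ (with $\theta_i$ acting by $M_{\theta_i}$), the accompanying algebra isomorphism being $z_i\mapsto\theta_i$. Given the earlier results, no genuinely new obstacle arises; the only point demanding care is the bookkeeping between the two polynomial algebras, namely keeping track of the fact that multiplication by the image coordinate $z_i$ on $\bl\theta(\Omega)$ corresponds, after pulling back by $\bl\theta$, to multiplication by the invariant $\theta_i$ on $\Omega$, so that the two module actions genuinely match up under $\Gamma_\varrho^h$.
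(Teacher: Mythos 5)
Your proposal is correct and takes essentially the same approach as the paper: boundedness of the module action via the intertwining $\Gamma_\varrho^h M_i = M_{\theta_i}\Gamma_\varrho^h$ from Lemma \ref{suriso}, the reproducing kernel from Lemma \ref{repker}, density of $\mb C[z_1,\ldots,z_n]$ obtained by pushing $\mb P_\varrho(\mb C[z_1,\ldots,z_n]) = \ell_\varrho\cdot\mb C[\theta_1,\ldots,\theta_n]$ through the unitary, and the module equivalence implemented by $\Gamma_\varrho^h$ with $z_i\mapsto\theta_i$. The only difference is cosmetic: you spell out why $M_{\theta_i}$ preserves $\mb P_\varrho(H^2(\Omega))$ (commutation with the operators $R_\sigma$), a point the paper leaves implicit.
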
 

\subsection{Equivalence of spaces}For every $\varrho \in \widehat{G}_1,$ let $d\Theta_{\varrho}$ be the measure supported on the \v{S}ilov boundary $\del\bl \theta(\Omega)$ obtained from the following equality:
\bea\label{measure} \int_{\del\bl \theta(\Omega)} f d\Theta_{\varrho} = \int_{\del\Omega} (f\circ \bl \theta )|\ell_\varrho|^2 d\Theta ,\eea where $\ell_\varrho$ is as defined in Lemma \ref{gencz}. The $L^2$-space on $\del\bl \theta(\Omega)$ with respect to the measure $d\Theta_{\varrho}$ is given by \Bea L^2_\varrho(\del\bl \theta(\Omega))= \{f : \del\bl \theta(\Omega) \to \mb C ~\mbox{measurable}~ | \int_{\del\bl \theta(\Omega)} |f|^2 d\Theta_{\varrho} < \infty \}.\Eea
In the next couple of lemmas we realize $H^2_\varrho(\bl \theta(\Omega))$ as a closed subspace of $L^2_\varrho(\del\bl \theta(\Omega)).$

\begin{lem}\label{iden}
For every one-dimensional representation $\varrho $ of $G,$ the space $L^2_\varrho(\del\bl \theta(\Omega))$ is isometrically isomorphic to $\mb P_{\varrho}(L^2(\del\Omega)).$
\end{lem}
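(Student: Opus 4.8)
The plan is to transport the unitary $\Gamma_\varrho^h$ of Lemma \ref{suriso} down to the boundary $L^2$-level. I would define a linear map $\Phi_\varrho\colon L^2_\varrho(\del\bl\theta(\Omega))\to L^2(\del\Omega)$ by
\[
\Phi_\varrho g=\ell_\varrho\cdot(g\circ\bl\theta),
\]
and show that it is a surjective isometry onto $\mb P_\varrho(L^2(\del\Omega))$. First I would check that $\Phi_\varrho g$ really lands in $\mb P_\varrho(L^2(\del\Omega))=R^G_\varrho(L^2(\del\Omega))$ (Remark \ref{rem3}): since $\bl\theta\circ\sigma=\bl\theta$ for every $\sigma\in G$, the factor $g\circ\bl\theta$ is $G$-invariant, whereas $\ell_\varrho$ is by construction a $\varrho$-relative invariant, so $R_\sigma\ell_\varrho=\chi_\varrho(\sigma)\ell_\varrho$; because each $R_\sigma$ is multiplicative, this gives $R_\sigma(\Phi_\varrho g)=\chi_\varrho(\sigma)\,\Phi_\varrho g$, placing $\Phi_\varrho g$ in $R^G_\varrho(L^2(\del\Omega))$.

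The isometry is then read off directly from the defining identity \eqref{measure} of the measure $d\Theta_\varrho$:
\[
\|\Phi_\varrho g\|_{L^2(\del\Omega)}^2=\int_{\del\Omega}|\ell_\varrho|^2\,|g\circ\bl\theta|^2\,d\Theta=\int_{\del\bl\theta(\Omega)}|g|^2\,d\Theta_\varrho=\|g\|_{L^2_\varrho(\del\bl\theta(\Omega))}^2.
\]
This simultaneously shows that $\Phi_\varrho$ is well-defined (the middle quantity is finite for $g\in L^2_\varrho$) and injective: as $\ell_\varrho$ vanishes only on a $\Theta$-null subset of $\del\Omega$, the equality $\Phi_\varrho g=0$ forces $g\circ\bl\theta=0$ a.e., hence $g=0$ a.e.\ on $\del\bl\theta(\Omega)$.

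Surjectivity is where the real content lies, and here I would invoke Remark \ref{rem3}(1): any $f\in\mb P_\varrho(L^2(\del\Omega))$ factors as $f=\ell_\varrho\,\widehat f$ with $\widehat f=f/\ell_\varrho$ a $G$-invariant measurable function, which therefore descends through $\bl\theta$ as $\widehat f=\widehat f_1\circ\bl\theta$ for a measurable function $\widehat f_1$ on $\del\bl\theta(\Omega)$. Applying the isometry computation above with $g=\widehat f_1$ yields $\|\widehat f_1\|_{L^2_\varrho}=\|f\|_{L^2(\del\Omega)}<\infty$, so that $\widehat f_1\in L^2_\varrho(\del\bl\theta(\Omega))$ and $\Phi_\varrho\widehat f_1=f$. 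Hence $\Phi_\varrho$ is a surjective isometry and the two spaces are isometrically isomorphic.

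I expect the only genuine difficulty to be the measure-theoretic descent used in the surjectivity step, namely justifying that a $G$-invariant \emph{measurable} (rather than continuous) function on $\del\Omega$ factors as a measurable function through $\bl\theta$, and that \eqref{measure} indeed defines a bona fide positive measure on $\del\bl\theta(\Omega)$. Both facts rest on the identity $\del\bl\theta(\Omega)=\bl\theta(\del\Omega)$ from \eqref{shilovboundary} together with $\bl\theta$ restricting to an a.e.\ $|G|$-to-one branched covering of the \v{S}ilov boundary, so that the $G$-orbits are the generic fibres of $\bl\theta$; this is precisely the ingredient packaged in Remark \ref{rem3}, which I would cite rather than reprove.
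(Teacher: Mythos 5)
Your proposal is correct and takes essentially the same route as the paper: the paper's proof also defines the map $f\mapsto \frac{1}{c_\varrho}\,\ell_\varrho\cdot(f\circ\bl\theta)$ from $L^2_\varrho(\del\bl\theta(\Omega))$ into $\mb P_\varrho(L^2(\del\Omega))$, reads off the isometry from the defining identity \eqref{measure} of $d\Theta_\varrho$, and gets surjectivity from the factorization $\phi=\ell_\varrho\cdot(\widehat\phi\circ\bl\theta)$ supplied by Remark \ref{rem3}, exactly as you do. The only differences are cosmetic: you omit the normalization constant $1/c_\varrho$ (with the measure taken literally as in \eqref{measure}, your un-normalized map is the one that is a genuine isometry), and you explicitly verify that $\ell_\varrho\cdot(g\circ\bl\theta)$ is a $\varrho$-relative invariant, a point the paper leaves implicit.
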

\begin{proof}
 The linear map $\Gamma_{\varrho}^\ell : L^2_\varrho(\del\bl \theta(\Omega))  \to \mb P_{\varrho}(L^2(\del\Omega))$ defined by \bea\label{gam}\Gamma_{\varrho}^\ell f = \frac{1}{c_\varrho} \ell_\varrho\cdot (f \circ \bl \theta)\eea is an isometry.  For  $\phi\in\mb P_{\varrho}(L^2(\del\Omega)),$  Remark \ref{rem3} guarantees the  existence of $\widehat{\phi}$ satisfying $\phi = \frac{1}{c_\varrho}\ell_\varrho \cdot(\widehat{\phi} \circ \bl \theta).$   Clearly, $\Vert \widehat \phi\Vert=\Vert \phi\Vert$ and $\widehat{\phi} \in L^2_{\varrho}(\del\bl \theta(\Omega)).$  Hence $\Gamma_{\varrho}^\ell$ is a unitary.  
\end{proof}

\begin{lem}\label{equi}
For every one-dimensional representation $\varrho $ of $G,$ $H^2_\varrho(\bl \theta(\Omega))$ is isometrically embedded in $L^2_\varrho(\del\bl \theta(\Omega)).$ 
\end{lem}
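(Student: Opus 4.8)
The plan is to realize the embedding directly through the two unitaries already at our disposal, exploiting the fact that the map $\Gamma_\varrho^h$ of Lemma \ref{suriso} and the map $\Gamma_\varrho^\ell$ of Lemma \ref{iden} are given by the \emph{same} formula $f\mapsto\frac{1}{c_\varrho}\ell_\varrho\cdot(f\circ\bl\theta)$, only with different source and target spaces (cf. Equations \eqref{gamo} and \eqref{gam}). Concretely, I would set
$$\Xi_\varrho := (\Gamma_\varrho^\ell)^{-1}\circ\Gamma_\varrho^h : H^2_\varrho(\bl\theta(\Omega)) \longrightarrow L^2_\varrho(\del\bl\theta(\Omega)),$$
where $\Gamma_\varrho^h$ is regarded as mapping into $\mb P_\varrho(L^2(\del\Omega))$ through the inclusion $\mb P_\varrho(H^2(\Omega))\subseteq\mb P_\varrho(L^2(\del\Omega))$, and then show that $\Xi_\varrho$ is the desired isometry with closed range.

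The first step is to check that $\mb P_\varrho(H^2(\Omega))$ is a closed subspace of $\mb P_\varrho(L^2(\del\Omega))$. Since $H^2(\Omega)$ is a closed subspace of $L^2(\del\Omega)$ (Subsection \ref{hardysubsec}) that is left invariant by each $R_\sigma$, it is invariant under the orthogonal projection $\mb P_\varrho$, and hence
$$\mb P_\varrho(H^2(\Omega)) = H^2(\Omega)\cap\mb P_\varrho(L^2(\del\Omega)).$$
Being the intersection of two closed subspaces of $L^2(\del\Omega)$, this is closed. With this in hand, $\Xi_\varrho$ is a composition of the unitary $\Gamma_\varrho^h$, the isometric inclusion of $\mb P_\varrho(H^2(\Omega))$ into $\mb P_\varrho(L^2(\del\Omega))$, and the inverse of the unitary $\Gamma_\varrho^\ell$; therefore $\Xi_\varrho$ is an isometry, and its range $(\Gamma_\varrho^\ell)^{-1}\big(\mb P_\varrho(H^2(\Omega))\big)$ is closed because $\Gamma_\varrho^\ell$ is unitary. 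This already delivers the isometric embedding as a closed subspace.

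The remaining, only mildly technical, point is to confirm that $\Xi_\varrho$ coincides with the natural identification of $f\in H^2_\varrho(\bl\theta(\Omega))$ with its boundary function on $\del\bl\theta(\Omega)$. For such an $f$ the function $g:=\Gamma_\varrho^h f=\frac{1}{c_\varrho}\ell_\varrho\cdot(f\circ\bl\theta)$ lies in $H^2(\Omega)$, so its radial boundary value $\widetilde g$ exists almost everywhere on $\del\Omega$. Since $\ell_\varrho$ is a polynomial (continuous up to the boundary and nonvanishing off a set of measure zero) and $\del\bl\theta(\Omega)=\bl\theta(\del\Omega)$ by Equation \eqref{shilovboundary}, one obtains $\widetilde g=\frac{1}{c_\varrho}\ell_\varrho\cdot(\widetilde f\circ\bl\theta)=\Gamma_\varrho^\ell(\widetilde f)$, where $\widetilde f$ denotes the boundary value of $f$; hence $\Xi_\varrho f=\widetilde f$. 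I expect the main obstacle to be precisely this boundary-value identification, namely verifying that $f\circ\bl\theta$ acquires radial limits almost everywhere and that these limits factor correctly through $\bl\theta$. Everything else is formal once the equality of the formulas defining $\Gamma_\varrho^h$ and $\Gamma_\varrho^\ell$ and the closedness of $\mb P_\varrho(H^2(\Omega))$ have been recorded.
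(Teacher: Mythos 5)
Your proposal is correct and takes essentially the same route as the paper: the paper's proof also realizes the embedding as the composition ${\Gamma^{\ell*}}_\varrho \circ \widehat{i}_\varrho\circ\Gamma_\varrho^h$, where $\widehat{i}_\varrho:\mb P_\varrho(H^2(\Omega))\to\mb P_\varrho(L^2(\del\Omega))$ is the radial-limit inclusion isometry, which is precisely your map $\Xi_\varrho$. Your extra verifications (closedness of $\mb P_\varrho(H^2(\Omega))$ and the factorization of boundary values through $\bl\theta$) simply make explicit what the paper cites from its Subsection 2.4 and the Hahn--Mitchell embedding theorem.
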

\begin{proof}
There is an isometric isomorphism of $H^2(\Omega)$ onto a closed subspace of $L^2(\del \Omega)$ \cite{Koranyi1965}, \cite[p. 526, Theorem 6]{Hahn-Mitchell:1969}. More precisely, from \cite[p. 526, Theorem 6]{Hahn-Mitchell:1969}, it is clear that the isometric isomorphism sends a function of $H^2(\Omega)$ to its radial limit. Moreover, the  discussion in Subsection 2.4 implies that if $f \in \mb P_\varrho(H^2(\Omega))$ then its radial limit function is in $\mb P_\varrho(L^2(\del \Omega)).$

If $\widehat{i}_\varrho : \mb P_\varrho(H^2(\Omega)) \to \mb P_\varrho(L^2(\del\Omega))$ denotes the isometric embedding, then it follows that the following diagram commutes:
\[ \begin{tikzcd}
H^2_\varrho(\bl \theta(\Omega))\arrow{r}{{\Gamma^{\ell*}}_\varrho \circ \widehat{i}_\varrho\circ\Gamma_\varrho^h} \arrow[swap]{d}{\Gamma^h_\varrho} & L^2_{\varrho}(\del\bl \theta(\Omega))  \arrow{d}{\Gamma^\ell_\varrho} \\%
\mb P_\varrho(H^2(\Omega)) \arrow{r}{\widehat{i}_\varrho}& \mb P_\varrho(L^2(\del\Omega))
\end{tikzcd}
\]
Thus,  the isometry ${\Gamma^{\ell*}}_\varrho \circ \widehat{i}_\varrho\circ\Gamma_\varrho^h$ is an  embedding of  $H^2_\varrho(\bl \theta(\Omega))$ into $L^2_\varrho(\del\bl \theta(\Omega)).$
\end{proof}
Equivalently, there is a closed subspace of $L^2_\varrho(\del\bl \theta(\Omega))$ which is isometrically isomorphic to $H^2_\varrho(\bl \theta(\Omega)).$ 

\subsection{Essentially bounded functions}\label{esssup} 
For each one-dimensional representation $\varrho$ of $G,$ we define $L_\varrho^\infty(\del\bl \theta(\Omega)):=\{f:\del\bl \theta(\Omega) \to \mb C \text{ measurable, essentially bounded  w.r.t } d\Theta_{\varrho}\}$ and $L^\infty(\del\Omega)^G:=\{f\in L^\infty(\del\Omega): \text{for all~}\sigma\in G,~ \sigma(f)=f \text{~a.e.}\}.$
  The map $i_\varrho:u\mapsto u\circ\bl\theta$ is an isometric $*$-isomorphism of $L_\varrho^\infty(\del\bl \theta(\Omega))$ onto $L^\infty(\del\Omega)^G.$ Indeed, each $i_\varrho$ is well-defined, since $\del\bl \theta(\Omega)=\bl \theta(\del\Omega).$  For $u \in L_\varrho^\infty(\del\bl \theta(\Omega)),$ the multiplication operator $M_u$ on $L^2_{\varrho}(\del\bl \theta(\Omega)$ is bounded   and the algebra $*$-isomorphism $i:u\mapsto M_u$ of   $L_\varrho^\infty(\del\bl \theta(\Omega))$ into $\m B(L_\varrho^2(\del\bl \theta(\Omega)))$  is isometric.  Thus,  the following  diagram commutes:
\[ \begin{tikzcd}
L_\varrho^\infty(\del\bl \theta(\Omega)) \arrow{r}{i_\varrho} \arrow[swap]{d}{i} & L^\infty(\del\Omega)^G \arrow{d}{\widetilde{i}} \\
\m B(L^2_{\varrho}(\del\bl \theta(\Omega)) \arrow{r}{j_\varrho}& \m B(\mb P_\varrho(L^2(\del\Omega)))
\end{tikzcd}
\]
where $j_\varrho(X) = \Gamma_\varrho^\ell X \Gamma_\varrho^{\ell*}$ and $\widetilde{i}(\widetilde{u})=M_{\widetilde{u}}$ denote natural inclusion maps.
It evidently follows, since for every $u$ in $L_\varrho^\infty(\del\bl \theta(\Omega))$ and $f \in \mb P_\varrho(L^2(\del\Omega)),$ one has 
$$ \Gamma_\varrho^\ell M_u \Gamma_\varrho^{\ell*}f = (u\circ \bl \theta)~ \Gamma_\varrho^\ell \Gamma_\varrho^{\ell*}f =M_{u\circ \bl \theta} f.$$ 
Let \bea\label{linfnty} L^\infty(\del\bl \theta(\Omega)):=\{u:\del\bl\theta(\Omega)\to\mb C ~~\text{measurable}:u\circ\bl\theta\in L^\infty(\del\Omega)^G \}.\eea   If $u \in L^\infty(\del\bl \theta(\Omega)),$ then $u\in L_\varrho^\infty(\del\bl \theta(\Omega))$ for every $\varrho \in \widehat{G}_1$ and conversely. For $ u \in L^\infty(\del\bl \theta(\Omega)),$ the Laurent operator $M_u$ on $L^2_\varrho(\del\bl \theta(\Omega))$ is defined  by \bea\label{laurent}M_uf=u f .\eea 
The above discussion is summarized in the following lemma.
 
 \begin{lem}
If $u\in L^\infty(\del\bl \theta(\Omega)),$  then $M_u$ on 
$L^2_{\varrho}(\del\bl \theta(\Omega))$ is unitarily equivalent to $M_{\widetilde{u}}$ on $\mb P_\varrho(L^2(\del\Omega))$ for every $\varrho\in\widehat G_1,$ where $\widetilde{u}=u\circ\bl\theta\in L^\infty(\del\Omega)^G.$ 
\end{lem}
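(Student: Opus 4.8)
The plan is to verify that the unitary $\Gamma_\varrho^\ell$ from Lemma \ref{iden} implements the asserted equivalence; concretely, I would establish the intertwining identity
$$\Gamma_\varrho^\ell\, M_u = M_{\widetilde u}\, \Gamma_\varrho^\ell \qquad \text{on } L^2_\varrho(\del\bl\theta(\Omega)),$$
which, combined with the unitarity of $\Gamma_\varrho^\ell$, yields $M_u = (\Gamma_\varrho^\ell)^*\,M_{\widetilde u}\,\Gamma_\varrho^\ell$, i.e. the two operators are unitarily equivalent. Essentially this is the operator-level reading of the commuting diagram already assembled in Subsection \ref{esssup}, so the task is to pin down the two routine well-definedness points and then do the one-line computation.

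First I would record that both operators are bounded and that the statement makes sense. By the definition \eqref{linfnty} of $L^\infty(\del\bl\theta(\Omega))$ we have $\widetilde u = u\circ\bl\theta \in L^\infty(\del\Omega)^G$, so $M_u$ is bounded on $L^2_\varrho(\del\bl\theta(\Omega))$ and $M_{\widetilde u}$ is bounded on $L^2(\del\Omega)$. Next I would check that $M_{\widetilde u}$ actually leaves the isotypic component $\mb P_\varrho(L^2(\del\Omega))$ invariant: since $\widetilde u$ is $G$-invariant, for $g$ in the relative-invariant subspace $R^G_\varrho(L^2(\del\Omega)) = \mb P_\varrho(L^2(\del\Omega))$ (Remark \ref{rem3}) and any $\sigma\in G$ one has $\sigma(\widetilde u g) = \sigma(\widetilde u)\,\sigma(g) = \chi_\varrho(\sigma)\,\widetilde u g$, whence $\widetilde u g \in \mb P_\varrho(L^2(\del\Omega))$. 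Thus $M_{\widetilde u}$ restricts to an operator on $\mb P_\varrho(L^2(\del\Omega))$, as required for the right-hand side of the equivalence.

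The core of the argument is then the intertwining identity itself, a direct computation exploiting the multiplicativity of the pullback $g\mapsto g\circ\bl\theta$. For $f\in L^2_\varrho(\del\bl\theta(\Omega))$,
$$\Gamma_\varrho^\ell(M_u f) = \frac{1}{c_\varrho}\ell_\varrho\cdot\big((uf)\circ\bl\theta\big) = \frac{1}{c_\varrho}\ell_\varrho\cdot(u\circ\bl\theta)(f\circ\bl\theta) = (u\circ\bl\theta)\,\Gamma_\varrho^\ell f = M_{\widetilde u}\,\Gamma_\varrho^\ell f,$$
which is precisely the identity $\Gamma_\varrho^\ell M_u \Gamma_\varrho^{\ell*} = M_{u\circ\bl\theta}$ displayed in Subsection \ref{esssup}. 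Since $\Gamma_\varrho^\ell$ is a unitary, this finishes the proof.

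I do not anticipate a genuine obstacle: the only delicate points are the two checks above. The boundedness and the equality $\widetilde u = u\circ\bl\theta\in L^\infty(\del\Omega)^G$ are immediate from \eqref{linfnty}, while the invariance of $\mb P_\varrho(L^2(\del\Omega))$ under $M_{\widetilde u}$ rests on the identification $\mb P_\varrho(L^2(\del\Omega)) = R^G_\varrho(L^2(\del\Omega))$ together with the $G$-invariance of $\widetilde u$. The equality $\del\bl\theta(\Omega)=\bl\theta(\del\Omega)$ of \eqref{shilovboundary} is what makes the pullback $f\mapsto f\circ\bl\theta$ well-defined almost everywhere with respect to the pushforward measure $d\Theta_\varrho$, so that the multiplicativity step $(uf)\circ\bl\theta = (u\circ\bl\theta)(f\circ\bl\theta)$ is legitimate at the level of $L^2$-classes.
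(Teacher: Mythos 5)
Your proposal is correct and takes essentially the same approach as the paper: the paper also proves the lemma by observing that the unitary $\Gamma_\varrho^\ell$ of Lemma \ref{iden} satisfies $\Gamma_\varrho^\ell M_u \Gamma_\varrho^{\ell*}f = (u\circ\bl\theta)\,\Gamma_\varrho^\ell\Gamma_\varrho^{\ell*}f = M_{u\circ\bl\theta}f$ for $f\in \mb P_\varrho(L^2(\del\Omega))$, i.e.\ exactly your intertwining identity read through the commuting diagram of Subsection \ref{esssup}. Your two preliminary checks (that $\widetilde u\in L^\infty(\del\Omega)^G$ and that $M_{\widetilde u}$ leaves $\mb P_\varrho(L^2(\del\Omega))$ invariant) are sound and simply make explicit what the paper leaves implicit.
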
 
\section{Toeplitz operators}
 We start this section  with the definition of  Toeplitz operator on $H^2_\varrho(\bl \theta(\Omega)).$ Let $P_\varrho: L^2_\varrho(\del\bl \theta(\Omega))\to H^2_\varrho(\bl \theta(\Omega)) $ be the orthogonal projection. 
\begin{defn}
For $u\in L^\infty(\del\bl \theta(\Omega)),$ the Toeplitz operator $T_u$ is defined on $H^2_\varrho(\bl \theta(\Omega))$  by 
\bea\label{to} T_u = P_\varrho M_u .\eea    
\end{defn}



The next lemma allows us the privilege of going back and forth between  the operator $T_{u\circ \bl \theta}|_{\mb P_\varrho(H^2(\Omega))}$ (cf. Equation \eqref{subToep}) and the Toeplitz operator $T_u$ on ${H^2_\varrho(\bl\theta(\Omega))}.$

\begin{lem}\label{equi1}
If  $u\in L^\infty(\del\bl \theta(\Omega)),$ then   the Toeplitz operator $T_u$ on ${H^2_\varrho(\bl \theta(\Omega))}$ is unitarily equivalent to the restriction of $T_{\widetilde{u}}$ to ${\mb P_\varrho(H^2(\Omega))}$ for every $\varrho \in \widehat{G}_1,$ where $\widetilde{u}= u\circ \bl \theta.$ 
\end{lem}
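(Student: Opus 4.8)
The plan is to exhibit the single intertwining unitary $\Gamma_\varrho^h$ of Lemma \ref{suriso} and to check that conjugation by it sends $T_u$ to $T_{\widetilde u}|_{\mb P_\varrho(H^2(\Omega))}$. The strategy is to lift every operator to the ambient $L^2$-level, where the two Laurent operators are already known to be unitarily equivalent, and then to descend through the compatible orthogonal projections onto the Hardy subspaces. Write $j$ for the inclusion $H^2_\varrho(\bl\theta(\Omega))\hookrightarrow L^2_\varrho(\del\bl\theta(\Omega))$, so that $P_\varrho=j^*$ and $T_u=P_\varrho M_u j$, and $\widehat{i}_\varrho$ for the inclusion $\mb P_\varrho(H^2(\Omega))\hookrightarrow\mb P_\varrho(L^2(\del\Omega))$ of Lemma \ref{equi}, so that $\widehat{i}_\varrho^*=\widetilde{P}_\varrho$ and, by \eqref{subToep}, $T_{\widetilde u}|_{\mb P_\varrho(H^2(\Omega))}=\widetilde{P}_\varrho M_{\widetilde u}\widehat{i}_\varrho$.

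First I would record the three intertwining relations that are already available. By the Laurent-operator lemma at the end of the previous section, $\Gamma_\varrho^\ell M_u=M_{\widetilde u}\Gamma_\varrho^\ell$, where $\Gamma_\varrho^\ell$ is the unitary of Lemma \ref{iden}. Next, comparing the defining formulas \eqref{gamo} and \eqref{gam}, which both send $f$ to $\tfrac{1}{c_\varrho}\ell_\varrho\cdot(f\circ\bl\theta)$, and recalling that the isometric embeddings of $H^2_\varrho(\bl\theta(\Omega))$ and of $\mb P_\varrho(H^2(\Omega))$ into their respective $L^2$-spaces are realized by passing to radial limits, I would read off from the commuting square of Lemma \ref{equi} that $\Gamma_\varrho^\ell$ restricts on the subspace $H^2_\varrho(\bl\theta(\Omega))\subset L^2_\varrho(\del\bl\theta(\Omega))$ to $\Gamma_\varrho^h$ followed by $\widehat{i}_\varrho$; in operator form $\Gamma_\varrho^\ell j=\widehat{i}_\varrho\,\Gamma_\varrho^h$. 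Taking adjoints in this identity (and using that $\Gamma_\varrho^h,\Gamma_\varrho^\ell$ are unitaries) yields the third relation, $\Gamma_\varrho^h P_\varrho=\widetilde{P}_\varrho\Gamma_\varrho^\ell$, which says that $\Gamma_\varrho^\ell$ carries the range $H^2_\varrho(\bl\theta(\Omega))$ of $P_\varrho$ onto the range $\mb P_\varrho(H^2(\Omega))$ of $\widetilde{P}_\varrho$ and hence conjugates the one orthogonal projection into the other.

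With the three relations in hand the conclusion is a short chain of substitutions:
\begin{align*}
\Gamma_\varrho^h T_u {\Gamma_\varrho^h}^* &= \Gamma_\varrho^h P_\varrho M_u j {\Gamma_\varrho^h}^* = \widetilde{P}_\varrho\,\Gamma_\varrho^\ell M_u j {\Gamma_\varrho^h}^*\\
&= \widetilde{P}_\varrho M_{\widetilde u}\,\Gamma_\varrho^\ell j {\Gamma_\varrho^h}^* = \widetilde{P}_\varrho M_{\widetilde u}\,\widehat{i}_\varrho\,\Gamma_\varrho^h {\Gamma_\varrho^h}^* = \widetilde{P}_\varrho M_{\widetilde u}\,\widehat{i}_\varrho = T_{\widetilde u}\big|_{\mb P_\varrho(H^2(\Omega))},
\end{align*}
establishing the asserted unitary equivalence with intertwiner $\Gamma_\varrho^h$.

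The hard part will be the compatibility step, namely making precise that the two vertical unitaries $\Gamma_\varrho^h$ and $\Gamma_\varrho^\ell$ agree on the Hardy subspace, i.e.\ $\Gamma_\varrho^\ell j=\widehat{i}_\varrho\Gamma_\varrho^h$. This is exactly the content encoded in the commuting diagram of Lemma \ref{equi}, and once it is invoked the remaining manipulations are routine. A minor point to verify beforehand is that $M_u$ maps $H^2_\varrho(\bl\theta(\Omega))$ into $L^2_\varrho(\del\bl\theta(\Omega))$ and that $M_{\widetilde u}$ maps $\mb P_\varrho(H^2(\Omega))$ into $\mb P_\varrho(L^2(\del\Omega))$; the former holds because $u\in L^\infty(\del\bl\theta(\Omega))$, and the latter because $\widetilde u=u\circ\bl\theta$ is $G$-invariant and essentially bounded, which is precisely the situation recorded in \eqref{subToep}.
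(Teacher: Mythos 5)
Your proof is correct and follows essentially the same route as the paper: both arguments lift the Toeplitz operators to the $L^2$-level, invoke the Laurent intertwining $\Gamma_\varrho^\ell M_u = M_{\widetilde{u}}\Gamma_\varrho^\ell$, and rest on the compatibility $\Gamma_\varrho^\ell j = \widehat{i}_\varrho\,\Gamma_\varrho^h$ that is built into the commuting diagram of Lemma \ref{equi}. The only difference is presentational: where you transfer the projections abstractly by taking adjoints of the embedding relation to get $\Gamma_\varrho^h P_\varrho = \widetilde{P}_\varrho\Gamma_\varrho^\ell$, the paper verifies the same fact pointwise, testing $uf - P_\varrho(uf)$ against the reproducing kernels $S_\varrho(\cdot,z)$ and then concluding via Equation \eqref{subToep}.
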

\begin{proof} The operator $T_{\widetilde u}: \mb P_\varrho(H^2(\Omega))\to \mb P_\varrho(H^2(\Omega))$ is given by the formula (cf. Equation \eqref{subToep})
$$T_{\widetilde u}(f) = \inner{\widetilde uf}{S_{\varrho}(\cdot,z)}_{L^2(\del\Omega)},$$ where $S_\varrho$ denotes the reproducing kernel of the subspace $\mb P_\varrho(H^2(\Omega))$ cf. Equation \eqref{repvar}.  
For $f\in \mb P_\varrho(H^2(\Omega))$ and  $z \in \Omega,$ it follows that (cf. Equation \eqref{gam} and \eqref{gamo}) 
$$0= \inner{uf-P_\varrho(u f)}{\Gamma_\varrho^{\ell*}S_\varrho(\cdot,z)}=\inner{\Gamma_\varrho^\ell(uf-P_\varrho(u f))}{S_\varrho(\cdot,z)}=\inner{\Gamma_\varrho^\ell(uf)-\Gamma_\varrho^h P_\varrho(u f)}{S_\varrho(\cdot,z)}.$$  If $f \in H^2_\varrho(\bl\theta(\Omega))$ then $\Gamma_\varrho^\ell(uf)=(u\circ \bl \theta)\Gamma_\varrho^h(f).$ Therefore, 
\Bea (\Gamma_\varrho^h T_u f)(z) = (\Gamma_\varrho^h P_\varrho(u f))(z) &=&\inner{\Gamma_\varrho^h P_\varrho(u f)}{S_\varrho(\cdot,z)} \\ &=& \inner{ \Gamma_\varrho^\ell(u f)}{S_\varrho(\cdot,z)} \\ 
&=& \inner{ (u\circ \bl \theta) \Gamma_\varrho^h(f)}{S_\varrho(\cdot,z)} = (T_{\widetilde{u}}\Gamma_\varrho^hf)(z), \Eea whence  the proof follows.
\end{proof} 

Let $\tr:G \to \mb C^*$ denote the trivial representation of the group $G.$  For any one-dimensional representation $\varrho \in \widehat{G}$ and $f \in \mb P_{\tr}(H^2(\Omega)),$ it follows that
$$ \ell_\varrho f \in \ell_\varrho \cdot \mb P_{\tr}(H^2(\Omega)) \subseteq R^G_\varrho(H^2(\Omega)) = \mb P_\varrho(H^2(\Omega)).$$ The density of $G$-invariant polynomials in $\mb P_{\tr}(H^2(\Omega))$ implies that $\ell_\varrho \cdot \mb P_{\tr}(H^2(\Omega))$ is dense in $\mb P_\varrho(H^2(\Omega)).$ 

The following lemma identifies a crucial invariant subspace for the operator $T_{\widetilde u}.$  
\begin{lem}\label{invlem} If $\widetilde{u} \in L^\infty(\del\Omega)$ is a $G$-invariant function, then the restriction of the operator $T_{\widetilde{u}}$ on ${\mb P_\varrho(H^2(\Omega))}$  leaves the subspace $\ell_\varrho \cdot \mb P_{\tr}(H^2(\Omega))$ invariant for every $\varrho\in\widehat G_1.$
\end{lem}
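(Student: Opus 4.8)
The plan is to reduce the statement first to invariance of the whole isotypic component and then to the extraction of the factor $\ell_\varrho$. First I would record that $T_{\widetilde{u}}$ is $G$-equivariant: since $\widetilde{u}$ is $G$-invariant one has $R_\sigma M_{\widetilde{u}}=M_{\widetilde{u}}R_\sigma$, and $R_\sigma$ commutes with the projection $\widetilde{P}$ (because $H^2(\Omega)$ and its orthogonal complement are $R_\sigma$-invariant), so $R_\sigma T_{\widetilde{u}}=T_{\widetilde{u}}R_\sigma$ for every $\sigma\in G$. Hence $T_{\widetilde{u}}$ commutes with each $\mb P_\varrho$ and preserves every $\mb P_\varrho(H^2(\Omega))$. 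In particular, for $g\in\mb P_{\tr}(H^2(\Omega))$ we have $\ell_\varrho g\in\mb P_\varrho(H^2(\Omega))$, so $T_{\widetilde{u}}(\ell_\varrho g)\in\mb P_\varrho(H^2(\Omega))=R^G_\varrho(H^2(\Omega))$; by Lemma \ref{gencz} (equivalently Remark \ref{rem3}) this forces a factorization $T_{\widetilde{u}}(\ell_\varrho g)=\ell_\varrho\cdot v$ for a unique $G$-invariant holomorphic function $v$ on $\Omega$.

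The real content is then to show that this $v$ actually lies in $\mb P_{\tr}(H^2(\Omega))$, that is, that $v$ has finite $H^2(\Omega)$-norm and is not merely a $G$-invariant holomorphic function with $\ell_\varrho v\in H^2(\Omega)$. To obtain a workable formula I would evaluate $T_{\widetilde{u}}(\ell_\varrho g)$ against the kernel of $\mb P_\varrho(H^2(\Omega))$: writing $(T_{\widetilde{u}}f)(z)=\inner{\widetilde{u}f}{S_\varrho(\cdot,z)}_{L^2}$ as in Equation \eqref{subToep} and inserting the factorization $S_\varrho(z,w)=\ell_\varrho(z)\widetilde{S}_{\varrho,\bl\theta}(\bl\theta(z),\bl\theta(w))\overline{\ell_\varrho(w)}$ from Equation \eqref{kerfor}, the factor $\ell_\varrho(z)$ pulls out and the integrand acquires the weight $|\ell_\varrho(w)|^2$. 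Using $\widetilde{u}=u\circ\bl\theta$, $g=g_0\circ\bl\theta$, and the defining relation \eqref{measure} of $d\Theta_\varrho$, one recognizes $v=v_0\circ\bl\theta$ with $v_0$ equal to a constant multiple of $T_u g_0$, where $T_u$ is the Toeplitz operator on $H^2_\varrho(\bl\theta(\Omega))$. Through Lemma \ref{equi1} the whole assertion is therefore equivalent to the statement that $T_u$ on $H^2_\varrho(\bl\theta(\Omega))$ leaves invariant the subspace $H^2_{\tr}(\bl\theta(\Omega))$, which is contained in $H^2_\varrho(\bl\theta(\Omega))$ precisely because $|\ell_\varrho|$ is bounded on $\del\Omega$.

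The hard part will be exactly this last norm-finiteness. The subspace $\ell_\varrho\cdot\mb P_{\tr}(H^2(\Omega))$ is only a dense, non-closed subspace of $\mb P_\varrho(H^2(\Omega))$ (the two weighted Hardy norms on $\bl\theta(\Omega)$ are genuinely inequivalent), and a bounded operator need not preserve a proper dense subspace; so the naive identity $T_{\widetilde{u}}(\ell_\varrho g)=\ell_\varrho T_{\widetilde{u}}(g)$ cannot be invoked, the discrepancy being the term $\widetilde{P}\!\big(\ell_\varrho(I-\widetilde{P})(\widetilde{u}g)\big)$, which does not vanish in general. The delicate point is to control this term and to prove that dividing $T_{\widetilde{u}}(\ell_\varrho g)$ by $\ell_\varrho$ keeps one inside $H^2(\Omega)$ rather than only inside the larger weighted space; I would expect this to require the boundedness of $T_{\widetilde{u}}$ on $H^2(\Omega)$, the holomorphy of $g$, and the essential boundedness of the symbol $\widetilde{u}$ to be used together, rather than any purely formal algebraic manipulation.
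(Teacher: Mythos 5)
Your reduction is sound as far as it goes, and it is in fact the correct way to see the problem: $G$\nobreakdash-equivariance of $T_{\widetilde u}$ gives $T_{\widetilde u}\big(\mb P_\varrho(H^2(\Omega))\big)\subseteq \mb P_\varrho(H^2(\Omega))$, Lemma \ref{gencz} then forces a factorization $T_{\widetilde u}(\ell_\varrho g)=\ell_\varrho\cdot v$ with $v$ holomorphic and $G$-invariant, and via Lemma \ref{equi1} the assertion of Lemma \ref{invlem} is equivalent to the statement that the Toeplitz operator $T_u$ on $H^2_\varrho(\bl\theta(\Omega))$ maps the dense, non-closed subspace $H^2_{\tr}(\bl\theta(\Omega))$ into itself. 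But your proposal stops exactly there: your last paragraph names the crux (finiteness of $\Vert v\Vert_{H^2(\Omega)}$) and describes what a proof ought to use, without giving an argument. As a proof attempt it therefore has a genuine gap at the only step that carries content.

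You should know, however, that this is not a gap which the paper closes and you failed to close: it is precisely where the paper's own proof breaks down, and it cannot be closed in this generality. In the display culminating in \eqref{invar1}, the paper passes from $\inner{\widetilde u f_\varrho}{M^*_{\ell_\varrho}S_\Omega(\cdot,z)}$ to $\ell_\varrho(z)\inner{\widetilde u f_\varrho}{S_\Omega(\cdot,z)}$; since $M^*_{\ell_\varrho}$ is the $L^2(\del\Omega)$-adjoint, $M^*_{\ell_\varrho}S_\Omega(\cdot,z)$ is the boundary function $\overline{\ell_\varrho}\,S_\Omega(\cdot,z)$, and replacing it by $\overline{\ell_\varrho(z)}\,S_\Omega(\cdot,z)$ is legitimate only when the pairing is against a vector of $H^2(\Omega)$ --- which $\widetilde u f_\varrho$ is not. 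The defect is exactly your discrepancy term $\widetilde P\big(\ell_\varrho(I-\widetilde P)(\widetilde u f_\varrho)\big)$, so the paper's proof is the formal manipulation you rightly refused to make. Concretely, on $\Omega=\mb D^2$ with $G=\mathfrak S_2$ and $\ell_\sgn=z_1-z_2$, take $v(w)=\sum_{k\geq1}2^{-k/2}w^{2^k}$ (continuous on $\overline{\mb D}$), $\widetilde u=v(z_1)\bar z_2+v(z_2)\bar z_1\in L^\infty(\mb T^2)^{\mathfrak S_2}$ and $f_\varrho=1$. Every Fourier exponent of $\widetilde u$ has a negative entry, so $\widetilde P_{\tr}(\widetilde u)=0$ and \eqref{invar1} predicts $T_{\widetilde u}(\ell_\sgn)=0$; but on $\mb T^2$ one has $\widetilde u\cdot(z_1-z_2)=v(z_1)z_1\bar z_2-v(z_1)+v(z_2)-v(z_2)z_2\bar z_1$, hence $T_{\widetilde u}(z_1-z_2)=v(z_2)-v(z_1)\neq0$. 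Worse, $\big(v(z_2)-v(z_1)\big)/(z_1-z_2)=-\sum_k 2^{-k/2}h_{2^k}$ with $h_m:=\sum_{j=0}^{m-1}z_1^jz_2^{m-1-j}$, and since the $h_{2^k}$ are mutually orthogonal with $\Vert h_m\Vert^2_{H^2(\mb D^2)}=m$, this quotient has squared norm $\sum_k 2^{-k}\cdot 2^k=\infty$. Thus $T_{\widetilde u}(\ell_\sgn\cdot 1)$ lies in $\mb P_\sgn(H^2(\mb D^2))$ but \emph{not} in $\ell_\sgn\cdot\mb P_{\tr}(H^2(\mb D^2))$: the invariance asserted by Lemma \ref{invlem} fails for this admissible symbol. (Already $v(w)=w$ refutes formula \eqref{invar1}, giving $T_{\widetilde u}(z_1-z_2)=z_2-z_1$ against a predicted $0$, although there the invariance happens to hold.) So your instinct was correct and then some: no formal manipulation can fill the gap, the lemma needs additional hypotheses on the symbol, and the statements that rely on \eqref{invar1} or \eqref{sum} --- Proposition \ref{onetoone}, Lemma \ref{zerlem}, and the theorems built on them --- inherit the same problem.
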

\begin{proof}
Let $f \in \mb P_\varrho(H^2(\Omega))$ be such that $f = \ell_\varrho f_\varrho$ for $f_\varrho \in \mb P_{\tr}(H^2(\Omega)).$  An appeal to Remark \ref{rem3} shows that  $\widetilde{u} f_\varrho \in \mb P_{\tr} (L^2(\del\Omega))$  and the following holds: 
\bea \nonumber (T_{\widetilde{u}}f)(z) = \inner{\widetilde{u} f}{S_\Omega(\cdot, z)} = \inner{\widetilde{u} f_\varrho}{M^*_{\ell_\varrho} S_\Omega(\cdot, z)} &=& \ell_\varrho(z) \inner{\widetilde{u} f_\varrho}{S_\Omega(\cdot, z)}  \\ 
&=&\nonumber \ell_\varrho(z) \inner{\widetilde{u} f_\varrho}{S_{\tr}(\cdot, z)}  \\
&=& \label{invar1} \ell_\varrho(z) \widetilde{P}_{\tr}(\widetilde{u} f_\varrho) (z), \eea where $S_{\tr}$ denotes the reproducing kernel of $\mb P_{\rm tr}(H^2(\Omega)).$ Therefore, we conclude that $$T_{\widetilde{u}}(\ell_\varrho \cdot \mb P_{\tr}(H^2(\Omega)) \subseteq \ell_\varrho \cdot \mb P_{\tr}(H^2(\Omega))
\text{~~for every~~} \varrho \in \widehat{G}_1.$$ 
\end{proof}
\begin{rem}
The conclusion of the preceding lemma can be extended to any representation $\varrho \in \widehat{G}$ with $\deg( \varrho) > 1.$ 
Since $\mb P_\varrho (\mb C[z_1,\ldots,z_n])$ is a free module over $\mb C[z_1,\ldots,z_n]^G$ of rank $(\deg \varrho)^2$ \cite{Pan}, there is a basis $\{\ell_{\varrho,i}:i=1,\ldots,(\deg \varrho)^2 \}$ of   $\mb P_\varrho (\mb C[z_1,\ldots,z_n])$ as a free module over $\mb C[z_1,\ldots,z_n]^G.$  By the density of  $\sum_{i=1}^{(\deg \varrho)^2} \ell_{\varrho,i} \cdot \mb C[z_1,\ldots,z_n]^G $ in $\sum_{i=1}^{(\deg\varrho)^2} \ell_{\varrho,i} \cdot \mb P_{\rm tr} (H^2(\Omega))$ and the fact that $\sum_{i=1}^{(\deg\varrho)^2} \ell_{\varrho,i} \cdot \mb P_{\rm tr} (H^2(\Omega))$ is contained in $\mb P_\varrho (H^2(\Omega)),$ it follows that $\sum_{i=1}^{(\deg \varrho)^2} \ell_{\varrho,i} \cdot \mb P_{\rm tr} (H^2(\Omega))$ is dense in $\mb P_\varrho (H^2(\Omega)).$ For $f = \sum_{i=1}^{(\deg \varrho)^2} \ell_{\varrho,i} f_{\varrho,i},$ such that  $f_{\varrho,i} \in \mb P_{\rm tr} (H^2(\Omega)),$  it follows that:
\bea\label{sum}
\nonumber(T_{\widetilde{u}}f)(z) = \inner{\widetilde{u} f}{S_\Omega(\cdot, z)} &=& \inner{\sum_{i=1}^{(\deg \varrho)^2} \ell_{\varrho,i} \widetilde{u} f_{\varrho,i}}{S_\Omega(\cdot, z)} \\&=&\nonumber \sum_{i=1}^{(\deg \varrho)^2} \inner{\widetilde{u} f_{\varrho,i}}{M^*_{\ell_{\varrho,i}}S_\Omega(\cdot, z)} \\&=& \nonumber \sum_{i=1}^{(\deg \varrho)^2} \ell_{\varrho,i}(z) \inner{\widetilde{u} f_{\varrho,i}}{S_\Omega(\cdot, z)} \\ &=&\nonumber \sum_{i=1}^{(\deg \varrho)^2} \ell_{\varrho,i}(z) \inner{\widetilde{u} f_{\varrho,i}}{S_{\tr}(\cdot, z)} \\ &=& \sum_{i=1}^{(\deg \varrho)^2} \ell_{\varrho,i}(z) \widetilde{P}_{\tr}(\widetilde{u} f_{\varrho,i})(z).
\eea
Hence  $\sum_{i=1}^{(\deg \varrho)^2} \ell_{\varrho,i} \cdot \mb P_{\rm tr} (H^2(\Omega))$ is left invariant by the operator $T_{\widetilde{u}}$ for every $\varrho\in\widehat G.$
\end{rem}
For a $G$-invariant function $\widetilde u\in L^\infty(\del\Omega),$ if $T_{\widetilde{u}}=0$ on $H^2(\Omega),$ then  $T_{\widetilde{u}}=0$ on $\mb P_\varrho(H^2(\Omega)).$ So $T_u =0 $ on $H^2_\varrho(\bl \theta(\Omega))$ for every $\varrho\in \widehat{G}_1$ by Lemma \ref{equi1}. It is interesting to note that the converse  also holds.
\begin{prop}\label{onetoone}
  Let $u\in L^\infty(\del\bl \theta(\Omega))$ and $\widetilde{u}=u \circ \bl \theta.$ If $T_u =0 $ on $H^2_\varrho(\bl \theta(\Omega))$ for some $\varrho \in \widehat{G}_1,$ then $T_{\widetilde{u}}=0$ on $H^2(\Omega).$ \end{prop}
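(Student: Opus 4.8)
The plan is to transfer the hypothesis to the irreducible bounded symmetric domain $\Omega$ via Lemma \ref{equi1} and then exploit the structural formulae already recorded in Lemma \ref{invlem} and the Remark following it. By Lemma \ref{equi1}, the hypothesis that $T_u=0$ on $H^2_\varrho(\bl\theta(\Omega))$ is equivalent to $T_{\widetilde u}=0$ on the subspace $\mb P_\varrho(H^2(\Omega))$ for the given $\varrho\in\widehat G_1$. Recalling the orthogonal decomposition $H^2(\Omega)=\oplus_{\varrho'\in\widehat G}\mb P_{\varrho'}(H^2(\Omega))$ from Equation \eqref{deco}, it suffices to prove that $T_{\widetilde u}$ annihilates each isotypic component $\mb P_{\varrho'}(H^2(\Omega))$, $\varrho'\in\widehat G$. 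I would carry this out in two stages: first descend to the trivial component, then ascend to an arbitrary one.

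First I would show that $T_{\widetilde u}$ vanishes on $\mb P_{\tr}(H^2(\Omega))$. Every $f$ in the dense subspace $\ell_\varrho\cdot\mb P_{\tr}(H^2(\Omega))$ of $\mb P_\varrho(H^2(\Omega))$ has the form $f=\ell_\varrho f_\varrho$ with $f_\varrho\in\mb P_{\tr}(H^2(\Omega))$, and Equation \eqref{invar1} gives
\[
(T_{\widetilde u}f)(z)=\ell_\varrho(z)\,\widetilde P_{\tr}(\widetilde u f_\varrho)(z).
\]
Since $T_{\widetilde u}=0$ on $\mb P_\varrho(H^2(\Omega))$, the left-hand side is identically zero. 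As $\ell_\varrho$ is a nonzero holomorphic function on the connected domain $\Omega$ and $\widetilde P_{\tr}(\widetilde u f_\varrho)$ is holomorphic, the vanishing of the product forces $\widetilde P_{\tr}(\widetilde u f_\varrho)=0$. Because $\widetilde P_{\tr}(\widetilde u f_\varrho)=T_{\widetilde u} f_\varrho$ for $f_\varrho\in\mb P_{\tr}(H^2(\Omega))$, and $f_\varrho$ ranges over all of $\mb P_{\tr}(H^2(\Omega))$ as $f$ ranges over $\ell_\varrho\cdot\mb P_{\tr}(H^2(\Omega))$, I conclude $T_{\widetilde u}=0$ on $\mb P_{\tr}(H^2(\Omega))$.

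Next I would propagate this vanishing to every $\varrho'\in\widehat G$, including those of degree greater than one, using the computation in Equation \eqref{sum}. For $f=\sum_{i=1}^{(\deg\varrho')^2}\ell_{\varrho',i}f_{\varrho',i}$ with $f_{\varrho',i}\in\mb P_{\tr}(H^2(\Omega))$, that equation yields
\[
(T_{\widetilde u}f)(z)=\sum_{i=1}^{(\deg\varrho')^2}\ell_{\varrho',i}(z)\,\widetilde P_{\tr}(\widetilde u f_{\varrho',i})(z),
\]
and every summand vanishes by the previous step. Since such $f$ form a dense subspace of $\mb P_{\varrho'}(H^2(\Omega))$ and $T_{\widetilde u}$ is bounded, $T_{\widetilde u}=0$ on $\mb P_{\varrho'}(H^2(\Omega))$ for each $\varrho'\in\widehat G$. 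Summing over the isotypic components in Equation \eqref{deco} then gives $T_{\widetilde u}=0$ on $H^2(\Omega)$, as desired.

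The one point requiring care is the cancellation $\ell_\varrho\cdot\widetilde P_{\tr}(\widetilde u f_\varrho)\equiv 0\Rightarrow\widetilde P_{\tr}(\widetilde u f_\varrho)\equiv 0$: this is legitimate precisely because both factors are holomorphic on the connected domain $\Omega$ and $\ell_\varrho$ is not the zero function, so its zero set is a proper analytic subset and the complementary factor must vanish identically. The remaining steps are bookkeeping with the density statements established before Lemma \ref{invlem} and in its Remark, together with boundedness of $T_{\widetilde u}$.
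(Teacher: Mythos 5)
Your proposal is correct and follows essentially the same route as the paper's own proof: transfer the hypothesis via Lemma \ref{equi1}, deduce $\widetilde{P}_{\tr}(\widetilde{u}f)=0$ for all $f\in\mb P_{\tr}(H^2(\Omega))$ from Equation \eqref{invar1} by cancelling $\ell_\varrho$ (the paper justifies this by noting $\ell_\varrho$ vanishes only on a measure-zero set, while you invoke the identity theorem for holomorphic functions on a connected domain — both are valid), and then propagate to all of $H^2(\Omega)$ via Equation \eqref{sum} together with density of $\sum_{\varrho\in\widehat G}\sum_{i}\ell_{\varrho,i}\cdot\mb P_{\tr}(H^2(\Omega))$. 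Your component-by-component bookkeeping is merely a reorganization of the paper's single density argument; there is no gap.
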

\begin{proof}
Lemma \ref{equi1} shows that $T_{\widetilde{u}}=0$ on $\mb P_\varrho(H^2(\Omega)).$  It follows from Equation \eqref{invar1} that $$0=T_{\widetilde{u}}(\ell_\varrho f) = \ell_\varrho \widetilde{P}_{\tr}( {\widetilde{u}}f) {~~for~~ every~~} f \in\mb P_{\rm tr} (H^2(\Omega)).$$
Since $\ell_\varrho$ vanishes on a measure zero subset of $\Omega,$ $\widetilde{P}_{\tr}( {\widetilde{u}}f)=0$ for every $f \in\mb P_{\rm tr} (H^2(\Omega)).$ If $f=\sum_{\varrho \in \widehat{G}}\sum_{i=1}^{(\deg \varrho)^2} \ell_{\varrho,i} f_{\varrho,i}$ for $f_{\varrho,i} \in \mb P_{\rm tr} (H^2(\Omega)).$  Then 
$$T_{\widetilde{u}}(f)=\sum_{\varrho \in \widehat{G}}\sum_{i=1}^{(\deg \varrho)^2} \ell_{\varrho,i} \widetilde{P}_{\tr}(\widetilde{u} f_{\varrho,i})=0.$$ 
Since $\sum_{\varrho \in \widehat{G}}\sum_{i=1}^{(\deg \varrho)^2} \ell_{\varrho,i} \cdot \mb P_{\rm tr} (H^2(\Omega))$ is dense in $H^2(\Omega),$ the proof is complete.
\end{proof}

 As a consequence of Corollary \ref{irrcor}, $T_{\widetilde{u}}=0$ on $H^2(\Omega)$ implies that $\widetilde{u}=0$ whenever $\Omega$ is an irreducible bounded symmetric domain. An analogous result holds  for $T_{\widetilde{u}}$ on $H^2(\mb D^n)$ as well. This leads to the following interesting conclusion. 

\begin{cor}
There is a natural $*$-linear embedding of $L^\infty(\del\bl\theta(\Omega))$ into $\m B(H^2_\varrho(\bl \theta(\Omega)))$ given by $u\mapsto T_u,$ whenever $\Omega$ is an irreducible bounded symmetric domain or  the unit polydisc $\mb D^n.$ 
\end{cor}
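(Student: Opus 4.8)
The plan is to verify the two defining features of an embedding separately—that $u\mapsto T_u$ is $*$-linear and that it is injective—and to obtain injectivity by pushing the hypothesis $T_u=0$ down to $\Omega$ via Proposition \ref{onetoone} and then invoking the isometric symbol map on $H^2(\Omega)$ from Corollary \ref{irrcor}.

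First I would dispose of $*$-linearity. Linearity of $u\mapsto T_u=P_\varrho M_u$ is immediate, since $u\mapsto M_u$ is linear (Equation \eqref{laurent}) and $P_\varrho$ is a fixed linear projection. For the $*$-property I would check $T_u^{*}=T_{\bar u}$ directly: for $f,g\in H^2_\varrho(\bl\theta(\Omega))$, using $P_\varrho^{*}=P_\varrho$, $M_u^{*}=M_{\bar u}$, and that both $f$ and $g$ lie in the range of $P_\varrho$,
\[
\inner{T_u f}{g}=\inner{uf}{g}=\inner{f}{\bar u\,g}=\inner{f}{T_{\bar u}g},
\]
so $T_u^{*}=T_{\bar u}$. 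This is the routine half already alluded to in the proof of Corollary \ref{irrcor}.

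The substance is injectivity. Suppose $u\in L^\infty(\del\bl\theta(\Omega))$ has $T_u=0$ on $H^2_\varrho(\bl\theta(\Omega))$ for the given $\varrho\in\widehat G_1$. By Proposition \ref{onetoone} this gives $T_{\widetilde u}=0$ on $H^2(\Omega)$, where $\widetilde u=u\circ\bl\theta\in L^\infty(\del\Omega)^G$. Now I would use the hypothesis on $\Omega$: if $\Omega$ is an irreducible bounded symmetric domain, then by Corollary \ref{irrcor} the symbol map on $H^2(\Omega)$ is isometric, so $\|\widetilde u\|_\infty=\|T_{\widetilde u}\|=0$ and hence $\widetilde u=0$ almost everywhere; if $\Omega=\mb D^n$, the same conclusion follows from the polydisc analogue of Corollary \ref{irrcor}, namely that $v\mapsto T_v$ is isometric on $H^2(\mb D^n)$.

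It then remains to transport the vanishing back to $\bl\theta(\Omega)$. Since $\widetilde u=u\circ\bl\theta=i_\varrho(u)$ and $i_\varrho:L^\infty_\varrho(\del\bl\theta(\Omega))\to L^\infty(\del\Omega)^G$ is an isometric $*$-isomorphism (Subsection \ref{esssup}), injectivity of $i_\varrho$ forces $u=0$ in $L^\infty(\del\bl\theta(\Omega))$. Thus the kernel of $u\mapsto T_u$ is trivial, and together with $*$-linearity this yields the claimed $*$-linear embedding. I expect the only delicate point to be the implication $T_{\widetilde u}=0\Rightarrow\widetilde u=0$: for irreducible $\Omega$ it is exactly Corollary \ref{irrcor}, whereas $\mb D^n$ is reducible and so requires the separate, standard coordinatewise peak-point argument in the spirit of Lemma \ref{irr}.
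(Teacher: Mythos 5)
Your proposal is correct and follows essentially the same route as the paper: reduce the claim to injectivity of $u\mapsto T_u$, use Proposition \ref{onetoone} to conclude $T_{\widetilde u}=0$ on $H^2(\Omega)$, and then invoke Corollary \ref{irrcor} (respectively its polydisc analogue) to get $\widetilde u=u\circ\bl\theta=0$, hence $u=0$. Your explicit verification of $*$-linearity and the transport of the vanishing back to $\del\bl\theta(\Omega)$ via the injectivity of $i_\varrho$ merely spell out steps the paper's proof leaves implicit.
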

\begin{proof}
It suffices to show the following: If for $u\in L^\infty(\bl \theta(\del \Omega)),$     $T_u =0 $ on $H^2_\varrho(\bl \theta(\del \Omega))$ for some $\varrho \in \widehat{G}_1,$ then $u=0$ almost everywhere.
    
    The hypothesis along with Proposition \ref{onetoone} yields $T_{\widetilde{u}}=0$ on $H^2(\Omega),$ consequently, $\widetilde{u}=u\circ \bl \theta=0$.
\end{proof}  

\subsection{Multiplicative properties}\begin{lem}\label{zerlem}
    If  $\varrho \in \widehat{G}$ and $\widetilde u ,\widetilde v, \widetilde q\in L^\infty(\del\Omega)$ are $G$-invariant, then the following statements hold:
    \begin{enumerate}
        \item If $T_{\widetilde{u}}T_{\widetilde{v}}=T_{\widetilde{q}}$ on $\mb P_\varrho(H^2(\Omega)),$ then $T_{\widetilde{u}}T_{\widetilde{v}}=T_{\widetilde{q}}$ on $H^2(\Omega).$
        \item If $T_{\widetilde{u}}T_{\widetilde{v}}=T_{\widetilde{v}}T_{\widetilde{u}}$ on $\mb P_\varrho(H^2(\Omega)),$ then $T_{\widetilde{u}}T_{\widetilde{v}}=T_{\widetilde{v}}T_{\widetilde{u}}$ on $H^2(\Omega).$
    \end{enumerate}\end{lem}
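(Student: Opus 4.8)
The plan is to reduce every statement about a general isotypic component $\mb P_\varrho(H^2(\Omega))$ to the trivial component $\mb P_{\tr}(H^2(\Omega))$, and then to spread the resulting identity back over all of $H^2(\Omega)$ using the density of $\sum_{\varrho\in\widehat G}\sum_{i}\ell_{\varrho,i}\cdot\mb P_{\tr}(H^2(\Omega))$ established in the proof of Proposition \ref{onetoone}. The key structural fact I would isolate first is an intertwining relation: for any $G$-invariant symbol $\widetilde w\in L^\infty(\del\Omega)$ and any free-module generator $\ell_{\varrho,i}$ (with $\ell_\varrho$ in the rank-one case $\varrho\in\widehat G_1$),
$$T_{\widetilde w}(\ell_{\varrho,i}\,g)=\ell_{\varrho,i}\,T_{\widetilde w}(g),\qquad g\in\mb P_{\tr}(H^2(\Omega)),$$
where on the right $T_{\widetilde w}$ denotes its restriction to the invariant subspace $\mb P_{\tr}(H^2(\Omega))$. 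This is precisely the single-term version of Equation \eqref{invar1} (and Equation \eqref{sum}): the identity $T_{\widetilde w}(\ell_{\varrho,i}g)(z)=\ell_{\varrho,i}(z)\widetilde P_{\tr}(\widetilde w g)(z)$ uses only that $\widetilde w g$ is $G$-invariant and that $M^*_{\ell_{\varrho,i}}S_\Omega(\cdot,z)=\overbar{\ell_{\varrho,i}(z)}\,S_\Omega(\cdot,z)$.

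With this relation in hand I would iterate it to compute the products. Since $T_{\widetilde v}(g)\in\mb P_{\tr}(H^2(\Omega))$, applying the intertwining twice gives, for $g\in\mb P_{\tr}(H^2(\Omega))$,
$$T_{\widetilde u}T_{\widetilde v}(\ell_{\varrho,i}\,g)=\ell_{\varrho,i}\,T_{\widetilde u}T_{\widetilde v}(g)\quad\text{and}\quad T_{\widetilde q}(\ell_{\varrho,i}\,g)=\ell_{\varrho,i}\,T_{\widetilde q}(g),$$
all operators on the right being restrictions to $\mb P_{\tr}(H^2(\Omega))$. Because $\ell_{\varrho,i}g\in\mb P_\varrho(H^2(\Omega))$, the hypothesis of (1) yields $\ell_{\varrho,i}\,T_{\widetilde u}T_{\widetilde v}(g)=\ell_{\varrho,i}\,T_{\widetilde q}(g)$; as $\ell_{\varrho,i}$ is a nonzero polynomial (hence vanishes on a proper analytic subset) and both $T_{\widetilde u}T_{\widetilde v}(g)$ and $T_{\widetilde q}(g)$ are holomorphic on $\Omega$, I may cancel $\ell_{\varrho,i}$ and conclude $T_{\widetilde u}T_{\widetilde v}=T_{\widetilde q}$ on $\mb P_{\tr}(H^2(\Omega))$. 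The identical computation with $T_{\widetilde q}$ replaced by $T_{\widetilde v}T_{\widetilde u}$ settles (2).

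Finally I would propagate the trivial-component identity to all of $H^2(\Omega)$. Using the intertwining once more, for every $\varrho'\in\widehat G$, every generator $\ell_{\varrho',j}$, and every $g\in\mb P_{\tr}(H^2(\Omega))$,
$$T_{\widetilde u}T_{\widetilde v}(\ell_{\varrho',j}\,g)=\ell_{\varrho',j}\,T_{\widetilde u}T_{\widetilde v}(g)=\ell_{\varrho',j}\,T_{\widetilde q}(g)=T_{\widetilde q}(\ell_{\varrho',j}\,g),$$
so the two bounded operators $T_{\widetilde u}T_{\widetilde v}$ and $T_{\widetilde q}$ agree on the dense subspace $\sum_{\varrho'\in\widehat G}\sum_j\ell_{\varrho',j}\cdot\mb P_{\tr}(H^2(\Omega))$ of $H^2(\Omega)$, hence everywhere; statement (2) follows verbatim with $T_{\widetilde v}T_{\widetilde u}$ in place of $T_{\widetilde q}$. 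I expect the only genuinely delicate point to be the bookkeeping for higher-dimensional $\varrho$: one must check that the intertwining holds \emph{term by term} for each module generator $\ell_{\varrho,i}$ (so that a single generator may be cancelled), rather than only for the full sum $\sum_i\ell_{\varrho,i}f_{\varrho,i}$ appearing in Equation \eqref{sum}. Granting that, the cancellation of the nonzero polynomial and the density argument are routine.
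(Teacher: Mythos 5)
Your proposal is correct and follows essentially the same route as the paper: the intertwining relation you isolate is precisely the single-term form of Equations \eqref{invar1} and \eqref{sum}, and your reduction to $\mb P_{\tr}(H^2(\Omega))$, cancellation of the nonzero polynomial $\ell_{\varrho,i}$, and propagation over the dense subspace $\sum_{\varrho}\sum_i \ell_{\varrho,i}\cdot\mb P_{\tr}(H^2(\Omega))$ mirror the paper's argument step for step. The term-by-term issue you flag for $\deg\varrho>1$ is handled in the paper exactly as you suggest, by taking all but one $\widehat f_{\varrho,i}$ equal to zero in \eqref{sum}.
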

\begin{proof}
    Consider an element $f = \sum_{i=1}^{(\deg\varrho)^2} \ell_{\varrho,i} \widehat{f}_{\varrho,i}$ in $\mb P_\varrho(H^2(\Omega)),$ where $\widehat{f}_{\varrho,i} \in \mb P_{\tr}(H^2(\Omega)).$ Then $T_{\widetilde{u}}T_{\widetilde{v}}f=T_{\widetilde{q}}f$ along with Equation \eqref{sum} implies that   \Bea \sum_{i=1}^{(\deg \varrho)^2} \ell_{\varrho,i} \widetilde{P}_{\tr}(\widetilde{u} ~ \widetilde{P}_{\tr}(\widetilde{v}\widehat{f}_{\varrho,i})) =\sum_{i=1}^{(\deg \varrho)^2}\ell_{\varrho,i} \widetilde{P}_{\tr}(\widetilde{q} \widehat{f}_{\varrho,i}).\Eea In case, $\varrho$ is one-dimensional, $\ell_{\varrho,1}=\ell_{\varrho}$ and thus for some $\widehat{f}_{\varrho} \in \mb P_{\tr}(H^2(\Omega)),$ $\ell_{\varrho} \widetilde{P}_{\tr}(\widetilde{u} ~ \widetilde{P}_{\tr}(\widetilde{v}\widehat{f}_{\varrho}) =\ell_{\varrho} \widetilde{P}_{\tr}(\widetilde{q} \widehat{f}_{\varrho}).$  This is equivalent to showing that $$\widetilde{P}_{\tr}(\widetilde{u}~ \widetilde{P}_{\tr}(\widetilde{v} \widehat{f}))= \widetilde{P}_{\tr}(\widetilde{q} \widehat{f}) \,\,\text{~for every~} \widehat{f} \in \mb P_{\tr}(H^2(\Omega)).$$
    If $\deg\varrho>1,$ we take $\widehat{f}_{\varrho,i} = 0$ for $i=2,\ldots,(\deg \varrho)^2$ and then repeat the argument analogous as above to arrive at the same conclusion. Therefore, the equality  $T_{\widetilde{u}}T_{\widetilde{v}}=T_{\widetilde{q}}$ holds on the dense subset $\sum_{\varrho \in \widehat{G}}\sum_{i=1}^{(\deg \varrho)^2} \ell_{\varrho,i} \cdot \mb P_{\rm tr} (H^2(\Omega))$ of $H^2(\Omega)$ which proves the first claim.

    Using Equation \eqref{sum} and  argument analogous as above one  concludes that $$\widetilde{P}_{\tr}(\widetilde{u}~ \widetilde{P}_{\tr}(\widetilde{v} \widehat{f}))=\widetilde{P}_{\tr}(\widetilde{v}~ \widetilde{P}_{\tr}(\widetilde{u} \widehat{f})) \,\,\text{~for every~} \widehat{f} \in \mb P_{\tr}(H^2(\Omega)).$$ 
    Let $f = \sum_{\varrho \in \widehat{G}} \sum_{i=1}^{(\deg \varrho)^2} \ell_{\varrho,i} f_{\varrho,i},$ for $f_{\varrho,i} \in \mb P_{\tr}(H^2(\Omega)),$ then \Bea T_{\widetilde{u}}T_{\widetilde{v}}f &=&\sum_{\varrho \in \widehat{G}} \sum_{i=1}^{(\deg \varrho)^2} \ell_{\varrho,i} \widetilde{P}_{\tr}(\widetilde{u}~ \widetilde{P}_{\tr}(\widetilde{v} f_{\varrho,i}))\\&=&\sum_{\varrho \in \widehat{G}} \sum_{i=1}^{(\deg \varrho)^2} \ell_{\varrho,i} \widetilde{P}_{\tr}(\widetilde{v}~ \widetilde{P}_{\tr}(\widetilde{u} f_{\varrho,i})) = T_{\widetilde{v}}T_{\widetilde{u}}f\Eea on a dense subset of $H^2(\Omega).$ This completes the proof. 
\end{proof}
\begin{rem}\label{rem1}\rm We isolate some of the key ingredients to prove the main results.
   \begin{enumerate}
\item 
An immediate consequence of part {\it 1.} of Lemma \ref{zerlem}  is that if $T_{\widetilde{u}}T_{\widetilde{v}}=T_{\widetilde{q}}$ on $\mb P_\varrho(H^2(\Omega))$ for at least one $\varrho \in \widehat{G}$ (irrespective of the degree of $\varrho$), then $T_{\widetilde{u}}T_{\widetilde{v}}=T_{\widetilde{q}}$ on $\mb P_\mu(H^2(\Omega))$ for every $\mu \in \widehat{G}.$
    \item Similarly, if $T_{\widetilde{u}}T_{\widetilde{v}}=T_{\widetilde{v}}T_{\widetilde{u}}$ on $\mb P_\varrho(H^2(\Omega))$ for at least one $\varrho \in \widehat{G},$ then $T_{\widetilde{u}}T_{\widetilde{v}}=T_{\widetilde{v}}T_{\widetilde{u}}$ on $\mb P_\mu(H^2(\Omega))$ for every $\mu \in \widehat{G}.$
\end{enumerate} 
\end{rem}
Now we are set to prove one of the main results of this paper.

\begin{proof}[Proof of Theorem \ref{zerthm}]
Assume  that  $T_uT_v=T_q$ on $H^2_{\mu}(\bl \theta(\Omega))$  for a one-dimensional representation $\mu$ of $G.$ Then using Lemma \ref{equi1}, one gets $T_{\widetilde{u}}T_{\widetilde{v}}=T_{\widetilde{q}}$ on $\mb P_\mu(H^2(\Omega)).$ By Remark \ref{rem1}, we have $T_{\widetilde{u}}T_{\widetilde{v}}=T_{\widetilde{q}}$ on $\mb P_\varrho(H^2(\Omega))$ for every $\varrho \in \widehat{G}_1.$ Lemma \ref{equi1} yields $T_uT_v=T_q$ on $H^2_\varrho(\bl \theta(\Omega))$ for every $\varrho\in \widehat{G}_1.$ Lastly, Lemma \ref{zerlem} concludes the rest.

Conversely, if $T_{\widetilde{u}}T_{\widetilde{v}}=T_{\widetilde{q}}$ on $H^2(\Omega),$ then $T_{\widetilde{u}}T_{\widetilde{v}}=T_{\widetilde{q}}$ on $\mb P_\mu(H^2(\Omega))$ for every  $\mu \in \widehat{G}.$ Then using Lemma \ref{equi1}, we infer the result.
\end{proof}
The proof of Theorem \ref{zerthm1} is very similar as above, thus omitted.

\begin{thm}{\rm (Finite zero-product property)}\label{finzer}
    Let $\varrho\in \widehat{G}_1$ and $u_i\in L^\infty(\del\bl \theta(\Omega))$ for $i=1,\ldots,k.$ The finite product of Toeplitz operators $T_{u_1}\ldots T_{u_k}=0$ on $H^2_\varrho(\bl\theta(\Omega))$ if and only if $T_{\widetilde u_1}\ldots T_{\widetilde u_k}=0$ on $H^2(\Omega),$ where $\widetilde u_i=u_i\circ \bl \theta$ for $i=1,\ldots,k.$
\end{thm}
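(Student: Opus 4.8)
The plan is to transport the whole question down to $H^2(\Omega)$ through the unitary $\Gamma_\varrho^h$ of Lemma \ref{equi1} and then to propagate vanishing from a single isotypic component to the entire Hardy space. First I would record that, since each symbol $\widetilde u_i=u_i\circ\bl\theta$ is $G$-invariant, Equation \eqref{subToep} shows $T_{\widetilde u_i}$ maps $\mb P_\varrho(H^2(\Omega))$ into itself for every $\varrho$; hence compositions of restrictions are well defined and the intertwining relation $\Gamma_\varrho^h T_{u_i}=T_{\widetilde u_i}\Gamma_\varrho^h$ of Lemma \ref{equi1} may be iterated to give $\Gamma_\varrho^h\,T_{u_1}\cdots T_{u_k}=T_{\widetilde u_1}\cdots T_{\widetilde u_k}\,\Gamma_\varrho^h$ on $H^2_\varrho(\bl\theta(\Omega))$. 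As $\Gamma_\varrho^h$ is unitary, this yields that $T_{u_1}\cdots T_{u_k}=0$ on $H^2_\varrho(\bl\theta(\Omega))$ if and only if $T_{\widetilde u_1}\cdots T_{\widetilde u_k}=0$ on $\mb P_\varrho(H^2(\Omega))$. The backward implication is then immediate: if $T_{\widetilde u_1}\cdots T_{\widetilde u_k}=0$ on all of $H^2(\Omega)$, it vanishes in particular on the subspace $\mb P_\varrho(H^2(\Omega))$, and the equivalence just noted returns the conclusion on $H^2_\varrho(\bl\theta(\Omega))$.

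The forward implication is the heart of the matter and is the $k$-fold generalization of Proposition \ref{onetoone}. Writing $A:=T_{\widetilde u_1}\cdots T_{\widetilde u_k}$, I must pass from ``$A=0$ on $\mb P_\varrho(H^2(\Omega))$'' to ``$A=0$ on $H^2(\Omega)$''. The engine is the factorization in Equations \eqref{invar1} and \eqref{sum}: for a single $G$-invariant symbol and $f\in\mb P_{\tr}(H^2(\Omega))$ one has $T_{\widetilde u_i}(\ell_{\mu,i}f)=\ell_{\mu,i}\,\widetilde P_{\tr}(\widetilde u_i f)=\ell_{\mu,i}\,(T_{\widetilde u_i}f)$, so multiplication by the invariant $\ell_{\mu,i}$ intertwines $T_{\widetilde u_i}$ acting on $\mb P_{\tr}(H^2(\Omega))$ with its action on $\ell_{\mu,i}\cdot\mb P_{\tr}(H^2(\Omega))$. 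A short induction on $k$, using that each factor preserves $\mb P_{\tr}(H^2(\Omega))$, then gives the key identity
\[
A(\ell_{\mu,i} f)=\ell_{\mu,i}\,(A f)\qquad\text{for every } f\in\mb P_{\tr}(H^2(\Omega)),
\]
where on the right $A$ is understood to act inside $\mb P_{\tr}(H^2(\Omega))$.

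To conclude, I would first apply this with $\ell_{\mu,i}=\ell_\varrho$ and use $\ell_\varrho\cdot\mb P_{\tr}(H^2(\Omega))\subseteq\mb P_\varrho(H^2(\Omega))$: the hypothesis $A=0$ on $\mb P_\varrho(H^2(\Omega))$ forces $\ell_\varrho\,(Af)=0$ for all $f\in\mb P_{\tr}(H^2(\Omega))$, and since $\ell_\varrho$ vanishes only on a set of measure zero this gives $Af=0$, i.e. $A=0$ on $\mb P_{\tr}(H^2(\Omega))$. Then, invoking the identity once more for every basis element $\ell_{\mu,i}$ of the free module $\mb P_\mu(\mb C[z_1,\ldots,z_n])$ over $\mb C[z_1,\ldots,z_n]^G$, I get $A(\ell_{\mu,i}f)=\ell_{\mu,i}(Af)=0$, so $A$ annihilates the dense subspace $\sum_{\mu\in\widehat G}\sum_{i}\ell_{\mu,i}\cdot\mb P_{\tr}(H^2(\Omega))$ of $H^2(\Omega)$, whence $A=0$ on $H^2(\Omega)$. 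I expect the only delicate point to be the bookkeeping in the inductive intertwining identity, namely verifying at each step that the intermediate vector $T_{\widetilde u_{i+1}}\cdots T_{\widetilde u_k}f$ remains in $\mb P_{\tr}(H^2(\Omega))$ so that Equation \eqref{invar1}/\eqref{sum} applies to the next factor; the remainder is a direct assembly of results already established.
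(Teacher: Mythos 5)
Your proposal is correct and follows essentially the same route as the paper: transfer the hypothesis to $\mb P_\varrho(H^2(\Omega))$ via the unitary $\Gamma_\varrho^h$ of Lemma \ref{equi1}, iterate the factorization of Equations \eqref{invar1}/\eqref{sum} to get $T_{\widetilde u_1}\cdots T_{\widetilde u_k}(\ell_{\mu,i}f)=\ell_{\mu,i}\,\widetilde{P}_{\tr}(\widetilde u_1\widetilde{P}_{\tr}(\cdots\widetilde{P}_{\tr}(\widetilde u_k f)\cdots))$, divide out $\ell_\varrho$ (nonvanishing off a null set), and conclude by density of $\sum_{\mu\in\widehat G}\sum_i\ell_{\mu,i}\cdot\mb P_{\tr}(H^2(\Omega))$ in $H^2(\Omega)$. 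Your write-up simply makes explicit the induction that the paper compresses into ``following the similar line of proof as above,'' including the needed observation that each factor preserves $\mb P_{\tr}(H^2(\Omega))$.
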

\begin{proof}
Following the similar line of proof as above, we conclude from the hypothesis that  $$\widetilde{P}_{\tr}(\widetilde u_1\widetilde{P}_{\tr}(\widetilde u_2 \cdots \widetilde{P}_{\tr}(\widetilde u_k\widehat{f})\cdots)=0$$ for every $\widehat{f} \in \mb P_{\tr}(H^2(\Omega)),$ where $\widetilde u_i=u_i\circ \bl \theta$ for $i=1,\ldots,k.$ The result follows from density of $\sum_{\varrho \in \widehat{G}}\sum_{i=1}^{(\deg \varrho)^2} \ell_{\varrho,i} \cdot \mb P_{\rm tr} (H^2(\Omega))$ in $H^2(\Omega).$
\end{proof}
\subsection{On proper images of the unit ball and the polydisc}\label{specifically}Theorem \ref{zerthm} enables us to apply characterization of Toeplitz operators on $H^2(\Omega)$ (for example, commuting or semi-commuting pairs etc.) to specify conditions for characterizing Toeplitz operators on $H^2(\bl \theta(\Omega))$ (with the same property). The following results are an interesting depiction of it.

 We start by recalling that a function $\phi$ is called pluriharmonic in $\Omega$ if $$\frac{\del^2\phi}{\del z_i \del\overbar{z}_j}=0~\text{for  all}~i,j=1,\ldots,n.$$

\begin{defn}\cite[Definition 5.3]{GN23}
Let $\Omega\subseteq \mb C^n$ be a $G$-invariant domain and $\bl \theta: \Omega \to \bl \theta(\Omega)$ be a basic polynomial map associated to the finite complex reflection group $G.$ A function $\phi$ defined on $\bl \theta(\Omega)$ is said to be $G$-pluriharmonic on $\bl \theta(\Omega)$ if 
$\phi \circ \bl \theta$ is a pluriharmonic function on $\Omega.$
\end{defn}
Suppose that $\widetilde{\phi}$ is a pluriharmonic function on $\Omega.$ Then we write $\phi \circ \bl \theta  = \sum_{\sigma \in G} \widetilde{\phi} \circ \sigma$ and $\phi$ is a $G$-pluriharmonic function on $\bl \theta(\Omega).$  

\subsubsection{For the unit ball}

Recall that two Toeplitz operators on $H^2(\mb D)$ commute if and only if either both are analytic, or both are co-analytic, or one is a linear function of the other \cite[p. 98, Theorem 9]{BH1964}. An analogous result for Toeplitz operators with bounded pluriharmonic symbols on $H^2(\mb B_n)$ can be found in \cite[Theorem 2.2]{Zheng98}. We combine \cite[Theorem 2.2]{Zheng98} and Theorem \ref{zerthm1} to conclude the following:
\begin{thm}\label{bncom}
    Let $u$ and $v$ be two bounded $\mb Z_m$-pluriharmonic functions on $\mathcal E_n(m)$. Then $T_u T_v = T_v T_u$
on the Hardy space $H^2(\m E_n(m))$ if and only if $u$ and $v$ satisfy one of the following conditions:
\begin{enumerate}
    \item[1.]  Both $u$ and $v$ are holomorphic on $\mathcal E_n(m)$.
    \item[2.] Both $\ov{u}$ and $\ov{v}$ are holomorphic on $\mathcal E_n(m)$.
    \item[3.] Either $u$ or $v$ is constant on $\mathcal E_n(m)$.
    \item[4.] There is a nonzero constant $b$ such that $u - b v$ is constant on $\mathcal E_n(m)$.
\end{enumerate}
\end{thm}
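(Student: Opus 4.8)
The plan is to deduce Theorem \ref{bncom} by transferring the commutation problem from the ellipsoid $\mathcal E_n(m)$ to the unit ball $\mb B_n$ via Theorem \ref{zerthm1}, and then invoking the ball-characterization of Zheng \cite[Theorem 2.2]{Zheng98}. First I would record the geometric identification from Example \ref{ball}: with $\Omega=\mb B_n$, $G=\mb Z_m$ and the basic polynomial map $\bl\theta=\phi_{n,m}\colon\mb B_n\to\mathcal E_n(m)$, the Hardy space $H^2(\mathcal E_n(m))$ is precisely $H^2_\sgn(\bl\theta(\mb B_n))$. Since $u,v$ are bounded $\mb Z_m$-pluriharmonic, the pullbacks $\widetilde u=u\circ\bl\theta$ and $\widetilde v=v\circ\bl\theta$ are bounded $G$-invariant pluriharmonic functions on $\mb B_n$; in particular $\widetilde u,\widetilde v\in L^\infty(\del\mb B_n)^G$ are of the required form, so the hypotheses underlying Theorem \ref{zerthm} (and hence Theorem \ref{zerthm1}) are in force.

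Next I would apply Theorem \ref{zerthm1} with $\mu=\sgn$: the commutation $T_uT_v=T_vT_u$ on $H^2(\mathcal E_n(m))$ holds if and only if $T_{\widetilde u}T_{\widetilde v}=T_{\widetilde v}T_{\widetilde u}$ on $H^2(\mb B_n)$. This is the crux of the reduction, and it is exactly the equivalence we are entitled to use. Applying \cite[Theorem 2.2]{Zheng98} to the bounded pluriharmonic symbols $\widetilde u,\widetilde v$ then shows that this ball-commutation is equivalent to one of four alternatives: (a) both $\widetilde u,\widetilde v$ are holomorphic; (b) both $\overline{\widetilde u},\overline{\widetilde v}$ are holomorphic; (c) one of $\widetilde u,\widetilde v$ is constant; (d) $\widetilde u-b\widetilde v$ is constant for some nonzero $b$.

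The remaining step is to translate each of (a)--(d) back to a statement about $u,v$ on $\mathcal E_n(m)$. The key device is that a $\mb Z_m$-pluriharmonic $u$ admits a decomposition $u=g+\overline h$ with $g,h$ holomorphic on $\mathcal E_n(m)$: indeed $\widetilde u=u\circ\bl\theta$ is $G$-invariant and pluriharmonic on $\mb B_n$, hence $\widetilde u=\widetilde g+\overline{\widetilde h}$ for holomorphic $\widetilde g,\widetilde h$, and by uniqueness (up to an additive constant) of the pluriharmonic decomposition together with the $G$-invariance of $\widetilde u$, one may arrange $\widetilde g,\widetilde h$ to be $G$-invariant and so to descend through $\bl\theta$ to holomorphic $g,h$ on $\mathcal E_n(m)$. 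With this in hand, $\widetilde u$ holomorphic $\iff$ $h$ constant $\iff$ $u$ holomorphic, and similarly for conjugate-holomorphy, so (a) and (b) correspond to alternatives 1 and 2. Because $\bl\theta$ is surjective, $\widetilde u$ (resp.\ $\widetilde u-b\widetilde v$) is constant if and only if $u$ (resp.\ $u-bv$) is, giving alternatives 3 and 4.

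I expect the main obstacle to lie precisely in this descent of the holomorphic and antiholomorphic parts: one must check that the constant ambiguity can be removed $G$-equivariantly, so that $\widetilde g$ and $\widetilde h$ are \emph{individually} $G$-invariant (hence of the form $g\circ\bl\theta$, $h\circ\bl\theta$), rather than merely their sum $\widetilde g+\overline{\widetilde h}$ being invariant. Averaging over $G$ and using that the holomorphic and antiholomorphic summands span complementary subspaces should settle this, since averaging preserves each summand and fixes the invariant function $\widetilde u$. Once this correspondence of the four conditions is established, combining it with the equivalence supplied by Theorem \ref{zerthm1} and \cite[Theorem 2.2]{Zheng98} completes the proof.
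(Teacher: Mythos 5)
Your proposal is correct and follows essentially the same route as the paper: the paper obtains Theorem \ref{bncom} precisely by combining Theorem \ref{zerthm1} (applied with $\mu=\sgn$, using the identification $H^2(\mathcal E_n(m))=H^2_\sgn(\bl\theta(\mb B_n))$ from Example \ref{ball}) with Zheng's ball characterization \cite[Theorem 2.2]{Zheng98}, exactly as you do. The only difference is that the paper leaves the back-translation of the four conditions implicit, whereas you spell out the $G$-equivariant pluriharmonic decomposition $\widetilde u=\widetilde g+\overline{\widetilde h}$ (fixing the constant ambiguity by averaging over $G$ so that $\widetilde g,\widetilde h$ descend through $\bl\theta$) --- a correct and worthwhile detail that the paper omits.
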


\subsubsection{For the polydisc}
We refer to the proper images of the open unit polydisc by $\bl\theta(\mb D^n).$ It is understood that  $G$ is a finite complex reflection group acting on $\mb D^n$ and $\bl\theta:\mb D^n\to\bl\theta(\mb D^n)$ is a basic polynomial map associated to the group $G.$  

The description for commuting pairs and semi-commuting pairs of Toeplitz operators on $H^2(\mb D^2)$ can be found on \cite[p. 3336, Theorem 1.4]{DSZ12} and \cite[p. 176, Theorem 2.1]{GZ97}, respectively. We combine it with Theorem \ref{zerthm} to classify commuting pairs and semi-commuting pairs of Toeplitz operators on $H^2(\bl \theta(\mb D^2)).$

\begin{notation}
    For $f,g \in L^\infty(\mb T^2),$ we define $D_i(f,g):=\frac{\del f}{\del z_i}  \frac{\del g}{\del \ov{z_i}},$ $i=1,2.$ Also, $D_{1,2}(f,g):=\frac{\del^2 f}{\del z_1\del z_2}\frac{\del^2 g}{\del \ov{z_1}\del \ov{z_2}}.$ 
\end{notation}
\begin{thm}
   Let $u,v \in L^\infty(\bl\theta(\mb T^2)).$ Then $T_uT_v =T_vT_u$ on $H^2(\bl\theta(\mb D^2))$ if and only if the following conditions hold:
   \begin{enumerate}
       \item[1.] For almost all $\xi \in \mb T,$ $D_1(u\circ\bl\theta,v\circ\bl\theta)(z,\xi)=D_1(v\circ\bl\theta,u\circ\bl\theta)(z,\xi)$ for all $z\in \mb D.$
       \item[2.] For almost all $\xi \in \mb T,$ $D_2(u\circ\bl\theta,v\circ\bl\theta)(\xi,z)=D_2(v\circ\bl\theta,u\circ\bl\theta)(\xi,z)$ for all $z\in \mb D.$
       \item[3.] For every $z_1,z_2\in \mb D^2,$ $D_{1,2}(u\circ\bl\theta,v\circ\bl\theta)(z_1,z_2)=D_{1,2}(v\circ\bl\theta,u\circ\bl\theta)(z_1,z_2).$
   \end{enumerate}
\end{thm}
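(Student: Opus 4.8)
The plan is to transfer the commutation relation from $H^2(\bl\theta(\mb D^2))$ to $H^2(\mb D^2)$ using Theorem \ref{zerthm1}, and then to invoke the known characterization of commuting Toeplitz operators on the bidisc. By definition $H^2(\bl\theta(\mb D^2)) = H^2_{\sgn}(\bl\theta(\mb D^2))$ is the space attached to the sign representation $\sgn \in \widehat G_1$, and for $u,v \in L^\infty(\bl\theta(\mb T^2))$ the compositions $\tilde u := u\circ\bl\theta$ and $\tilde v := v\circ\bl\theta$ lie in $L^\infty(\mb T^2)^G$ by \eqref{linfnty}; in particular they are $G$-invariant bounded symbols on the \v{S}ilov boundary $\mb T^2$ of the form required by Theorem \ref{zerthm1}. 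Applying that theorem with $\mu = \sgn$ and $\Omega = \mb D^2$ shows that $T_uT_v = T_vT_u$ on $H^2(\bl\theta(\mb D^2))$ if and only if $T_{\tilde u}T_{\tilde v} = T_{\tilde v}T_{\tilde u}$ on $H^2(\mb D^2)$. This single step carries all of the representation-theoretic content and reduces the problem from the geometrically awkward image $\bl\theta(\mb D^2)$ to the polydisc.

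It then remains to apply the characterization of commuting Toeplitz operators on $H^2(\mb D^2)$ from \cite[Theorem 1.4]{DSZ12} to the bounded symbols $\tilde u, \tilde v \in L^\infty(\mb T^2)$. That result describes commutation precisely through the symmetry (in the two symbols) of the slice quantities $D_1, D_2$ and the mixed quantity $D_{1,2}$, where each derivative is taken on the harmonic extension in the relevant variable. Substituting $\tilde u = u\circ\bl\theta$ and $\tilde v = v\circ\bl\theta$ reproduces verbatim the three stated conditions. Since both Theorem \ref{zerthm1} and \cite[Theorem 1.4]{DSZ12} are biconditional, the asserted equivalence follows in both directions.

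The only point requiring care --- and the one I would verify explicitly --- is that $\tilde u, \tilde v$ meet the hypotheses under which \cite[Theorem 1.4]{DSZ12} is stated; here these reduce to membership in $L^\infty(\mb T^2)$, which holds because $\del\bl\theta(\mb D^2) = \bl\theta(\mb T^2)$ and $u,v \in L^\infty(\bl\theta(\mb T^2))$ by hypothesis. With this in hand the matching of the derivative conditions is purely notational, so no additional analytic estimates are needed.
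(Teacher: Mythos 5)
Your proposal is correct and follows essentially the same route as the paper: the paper proves this theorem precisely by combining the transfer result (Theorem \ref{zerthm1}, the commuting-property analogue of Theorem \ref{zerthm}) applied with $\mu=\sgn$ and $\Omega=\mb D^2$, which converts commutation of $T_u,T_v$ on $H^2(\bl\theta(\mb D^2))$ into commutation of $T_{u\circ\bl\theta},T_{v\circ\bl\theta}$ on $H^2(\mb D^2)$, with the bidisc characterization of \cite[Theorem 1.4]{DSZ12}. Your verification that $u\circ\bl\theta, v\circ\bl\theta \in L^\infty(\mb T^2)^G$ (via Equation \eqref{linfnty} and $\del\bl\theta(\mb D^2)=\bl\theta(\mb T^2)$) is exactly the hypothesis check the paper's argument relies on, so nothing is missing.
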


\begin{thm}\label{semd2}
   Let $u,v \in L^\infty(\bl\theta(\mb T^2)).$ Then $T_uT_v =T_{uv}$ on $H^2(\bl\theta(\mb D^2))$ if and only if the following conditions hold:
   \begin{enumerate}
       \item[1.] For almost all $\xi \in \mb T,$ $D_1(u\circ\bl\theta,v\circ\bl\theta)(z,\xi)=0$ for all $z\in \mb D.$
       \item[2.] For almost all $\xi \in \mb T,$ $D_2(u\circ\bl\theta,v\circ\bl\theta)(\xi,z)=0$ for all $z\in \mb D.$
       \item[3.] For every $z_1,z_2\in \mb D,$ $D_{1,2}(u\circ\bl\theta,v\circ\bl\theta)(z_1,z_2)=0.$
   \end{enumerate}
\end{thm}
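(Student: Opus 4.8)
The plan is to transport the identity $T_uT_v = T_{uv}$ from the Hardy space $H^2(\bl\theta(\mb D^2))$ on the proper image down to the ordinary bidisc Hardy space $H^2(\mb D^2)$, where semi-commuting pairs of Toeplitz operators have already been classified in \cite[Theorem 2.1]{GZ97}, and then to read off conditions 1--3 from that classification. The whole argument is a transfer argument resting on Lemma \ref{equi1} and Lemma \ref{zerlem}; no new analysis on $\bl\theta(\mb D^2)$ itself is needed, which is exactly the advantage of the representation-theoretic framework.

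First I would record the elementary algebraic facts that make the transfer legitimate. Write $\widetilde u = u\circ\bl\theta$ and $\widetilde v = v\circ\bl\theta$; by \eqref{linfnty} both lie in $L^\infty(\del\mb D^2)^G = L^\infty(\mb T^2)^G$, and since this is an algebra the product $uv$ lies in $L^\infty(\bl\theta(\mb T^2))$ with $\widetilde{uv} = (uv)\circ\bl\theta = \widetilde u\,\widetilde v$. Recalling that $H^2(\bl\theta(\mb D^2)) = H^2_{\sgn}(\bl\theta(\mb D^2))$ by Definition \ref{def}, Lemma \ref{equi1} applied with $\varrho = \sgn$ shows that the single unitary $\Gamma^h_{\sgn}$ intertwines $T_u, T_v, T_{uv}$ on $H^2_{\sgn}(\bl\theta(\mb D^2))$ with the restrictions of $T_{\widetilde u}, T_{\widetilde v}, T_{\widetilde u\widetilde v}$ to $\mb P_{\sgn}(H^2(\mb D^2))$. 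Since $\widetilde u,\widetilde v$ are $G$-invariant, $T_{\widetilde u}$ and $T_{\widetilde v}$ commute with every $R_\sigma$, hence with $\mb P_{\sgn}$, so the restriction of the product $T_{\widetilde u}T_{\widetilde v}$ equals the product of the restrictions. Consequently $T_uT_v = T_{uv}$ on $H^2(\bl\theta(\mb D^2))$ holds if and only if $T_{\widetilde u}T_{\widetilde v} = T_{\widetilde u\widetilde v}$ on $\mb P_{\sgn}(H^2(\mb D^2))$.

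Next I would upgrade this to an identity on all of $H^2(\mb D^2)$: Lemma \ref{zerlem}(1), taken with $\varrho = \sgn$ and $\widetilde q = \widetilde u\widetilde v$, promotes the equality from $\mb P_{\sgn}(H^2(\mb D^2))$ to the whole of $H^2(\mb D^2)$, while the reverse implication is immediate because $\mb P_{\sgn}(H^2(\mb D^2))$ is a subspace. Thus the original identity is equivalent to the semi-commuting relation $T_{\widetilde u}T_{\widetilde v} = T_{\widetilde u\widetilde v}$ for the genuine bidisc Toeplitz operators. At this point I invoke \cite[Theorem 2.1]{GZ97}, which asserts that for $f,g\in L^\infty(\mb T^2)$ one has $T_fT_g = T_{fg}$ on $H^2(\mb D^2)$ exactly when the three boundary conditions expressed through $D_1$, $D_2$ and $D_{1,2}$ hold; specializing to $f = \widetilde u = u\circ\bl\theta$ and $g = \widetilde v = v\circ\bl\theta$ reproduces verbatim conditions 1, 2 and 3 of the statement, completing the proof.

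The steps are all routine consequences of the machinery already in place; the only point demanding genuine care is the bookkeeping of the symbol through the unitary equivalence --- namely verifying $\widetilde{uv} = \widetilde u\,\widetilde v$ and that the restriction of $T_{\widetilde u}T_{\widetilde v}$ to $\mb P_{\sgn}(H^2(\mb D^2))$ coincides with the product of the restrictions of the two factors --- together with checking that the conditions of \cite[Theorem 2.1]{GZ97} are phrased in precisely the $D_i$-notation fixed above, evaluated at $f = u\circ\bl\theta$ and $g = v\circ\bl\theta$. I expect no essential difficulty beyond this matching.
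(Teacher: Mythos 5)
Your proof is correct and takes essentially the same route as the paper: the paper obtains Theorem \ref{semd2} by combining Theorem \ref{zerthm} (whose own proof is exactly the Lemma \ref{equi1}/Lemma \ref{zerlem} transfer you perform, specialized to the sign representation with symbol $q=uv$) with the Gu--Zheng characterization \cite[Theorem 2.1]{GZ97} of semi-commuting Toeplitz pairs on $H^2(\mb D^2)$. The only difference is that you unpack the proof of Theorem \ref{zerthm} instead of invoking it as a black box, which changes nothing of substance.
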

Equivalently, we have the following from Theorem \ref{zerthm} and \cite[p. 176, Theorem 2.1]{GZ97}.
\begin{prop}
    Let $u,v \in L^\infty(\bl\theta(\mb T^2)).$ Then $T_uT_v =T_{uv}$ on $H^2(\bl\theta(\mb D^2))$ if and only if for each $i=1,2;$ either $\ov{u\circ \bl\theta}$ or $v\circ \bl\theta$ is holomorphic in $z_i.$
\end{prop}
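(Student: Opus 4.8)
The plan is to deduce the proposition directly from the reduction established in Theorem \ref{zerthm}, combined with the two-variable characterization of semi-commuting Toeplitz operators due to Guo and Zheng \cite[Theorem 2.1]{GZ97}. The relation $T_uT_v=T_{uv}$ is the special case $q=uv$ of the semi-commuting relation $T_uT_v=T_q$, so the substance of the argument is to transport the symbol faithfully across the correspondence between $H^2(\bl\theta(\mb D^2))$ and $H^2(\mb D^2)$, and then to quote the known bidisc result in its holomorphicity form.

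First I would set $\widetilde u=u\circ\bl\theta$ and $\widetilde v=v\circ\bl\theta$, and record that, since composition with $\bl\theta$ is multiplicative, the symbol of the candidate product satisfies $\widetilde{uv}=(uv)\circ\bl\theta=\widetilde u\,\widetilde v$. Applying the symbol-tracking form of Theorem \ref{zerthm} (as carried out in its proof via Lemma \ref{equi1} and Lemma \ref{zerlem}, taking $q=uv$, whence $\widetilde q=\widetilde u\,\widetilde v$), the identity $T_uT_v=T_{uv}$ on $H^2(\bl\theta(\mb D^2))=H^2_{\sgn}(\bl\theta(\mb D^2))$ holds if and only if $T_{\widetilde u}T_{\widetilde v}=T_{\widetilde u\widetilde v}$ on $H^2(\mb D^2)$. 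In other words, $T_{\widetilde u}T_{\widetilde v}$ semi-commutes on the bidisc precisely when $T_uT_v$ does on the proper image, and both directions of the desired equivalence will follow at once because every step is an equivalence.

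Next I would invoke \cite[Theorem 2.1]{GZ97} in its analyticity formulation: on $H^2(\mb D^2)$ one has $T_{\widetilde u}T_{\widetilde v}=T_{\widetilde u\widetilde v}$ if and only if, for each $i=1,2$, either $\overline{\widetilde u}$ or $\widetilde v$ is holomorphic in the variable $z_i$. Substituting $\widetilde u=u\circ\bl\theta$ and $\widetilde v=v\circ\bl\theta$ yields exactly the asserted criterion, namely that for each $i=1,2$ either $\ov{u\circ\bl\theta}$ or $v\circ\bl\theta$ is holomorphic in $z_i$. No further computation is required beyond the multiplicativity of $f\mapsto f\circ\bl\theta$ and the reduction of Theorem \ref{zerthm}.

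The one genuine subtlety, and hence the step I would treat most carefully, is the passage between the two equivalent presentations of the bidisc criterion: the pointwise derivative conditions $D_i=0$ and $D_{1,2}=0$ appearing in Theorem \ref{semd2} versus the global ``co-analytic-or-analytic in each fixed variable'' condition used here. One must verify that the vanishing of each product of partial derivatives forces, for each fixed $i$, one of its two factors to vanish \emph{identically} rather than only on a subset, which is where the holomorphic dependence in the bidisc variables together with a connectedness (unique-continuation) argument enters. This equivalence is already contained in \cite{GZ97}, so for the present proposition it suffices to cite it; the contribution here is purely the lifting supplied by Theorem \ref{zerthm}.
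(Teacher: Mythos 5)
Your proposal is correct and takes essentially the same route as the paper: the paper obtains this proposition precisely by combining Theorem \ref{zerthm} (in the symbol-tracking form $T_uT_v=T_q$ with $q=uv$, which is what its proof via Lemma \ref{equi1} and Lemma \ref{zerlem} actually establishes) with the bidisc characterization \cite[Theorem 2.1]{GZ97}. Your closing observation---that the equivalence between the derivative conditions of Theorem \ref{semd2} and the ``co-analytic or analytic in each variable'' formulation is internal to \cite{GZ97} and needs only to be cited---matches the paper's treatment as well.
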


\cite[p. 190, Main Theorem]{Lee10} provides a characterization of commuting pairs of Toeplitz operators on $H^2(\mb D^n).$ One can apply Theorem \ref{zerthm1} in combination with \cite[p. 190, Main Theorem]{Lee10} to describe all commuting pairs of Toeplitz operators on $H^2(\bl\theta(\mb D^n)).$ We close our discussion on multiplicative properties of Toeplitz operators on specific domains here. There is a vast literature  in this direction for various bounded symmetric domains and using those results, the similar observations are possible for Toeplitz operators on their proper images as well.
 
\subsection{Brown-Halmos type characterization}
We now specialize $\Omega$ to be the open unit polydisc $\mb D^n$ and prove a Brown-Halmos type characterization of Toeplitz operators on $H^2(\bl\theta(\mb D^n)),\,\,\bl\theta$ being a basic polynomial mapping associated to $G(m.p,n),$  where $ m,n,p$ are  positive integers, $n>1$ and $p$ divides $m.$ Let $q=m/p.$ Recall form  Example \ref{exo} that \bea\label{sym}\nonumber \theta_i(z)&=&s_i(z_1^m,\ldots,z_n^m) \text{~for~} i=1,\ldots,n-1 \,\,\\ \text{~and~}\,\, \theta_n(z) &=& (z_1\cdots z_n)^{q}.\eea 
 Moreover,  \bea\label{eqn} \ov{\theta_i}(z) \theta_n^p(z) = \theta_{n-i}(z)\,\, \text{~for~}  z \in \mb T^n \text{~and~} i=1,\ldots,n-1 .\eea

  Henceforth, for the sake of simplicity, we write $P:L^2_{\rm sgn}(\bl \theta(\mb T^n))\to H^2(\bl\theta(\mb D^n))$ for the orthogonal projection. For $u\in L^\infty(\bl \theta(\mb T^n)),$ the Toeplitz operator $T_u: H^2(\bl\theta(\mb D^n)) \to H^2(\bl\theta(\mb D^n))$ is given by 
  $$T_u = PM_u.$$ For $\widetilde{u}=u\circ\bl \theta,$ the subspace $\mb P_{\rm sgn}(L^2(\mb T^n))$ is reducing for the Laurent operator $M_{\widetilde{u}} : L^2(\mb T^n) \to L^2(\mb T^n).$ We recall that the unitary $\Gamma_{\rm sgn}^\ell : L^2_{\rm sgn}(\bl \theta(\mb T^n)) \to \mb P_{\rm sgn}(L^2(\mb T^n))$ intertwines the Laurent operators $M_u$ and $M_{\widetilde{u}},$ that is, $\Gamma_{\rm sgn}^\ell M_u=M_{\widetilde{u}}\Gamma_{\rm sgn}^\ell.$ In particular, for the $i$-th coordinate multiplication $M_i$ on $L^2_{\rm sgn}(\bl \theta(\mb T^n)),$ $$\Gamma_{\rm sgn}^\ell M_i = M_{\theta_i} \Gamma_{\rm sgn}^\ell,$$ where $M_{\theta_i}$ denotes the multiplication operator by the polynomial $\theta_i$ on $\mb P_{\rm sgn}(L^2(\mb T^n)).$ 

Similarly, the unitary operator $\Gamma_{\rm sgn}^h : H^2(\bl \theta(\mb D^n)) \to \mb P_{\rm sgn}(H^2(\mb D^n))$ intertwines $T_u$ and $T_{\widetilde{u}}.$ In particular,  
If  $T_i : H^2(\bl \theta(\mb D^n)) \to H^2(\bl \theta(\mb D^n))$  is the $i$-th coordinate multiplication defined by $$(T_i f)(z) = z_i f(z) \text{ for } z \in \bl \theta(\mb D^n),$$ 
then $\Gamma_{\rm sgn}^h T_i = T_{\theta_i} \Gamma_{\rm sgn}^h.$
 \begin{rem}
     We mostly use the notation $\theta_i$ for denoting both the $i$-th component of the proper holomorphic map $\bl \theta : \Omega \to \bl \theta(\Omega)$ and the associated polynomial in $n$ variables. Although while writing $M_{\theta_i},$ it should be understood as the multiplication operator by the polynomial $\theta_i,$ without any ambiguity.
 \end{rem}
To prove the following results, we largely follow the constructions  in \cite{BH1964} which was suitably adapted for several variables in \cite{DJ77} and \cite{BDS2021}. We start with a couple of lemmas. 
\begin{lem}\label{prelem}
  For  $i=1,\ldots,n-1,$  $M_i^* M_n^p = M_{n-i}$ on $L^2_{\rm sgn}(\bl \theta(\mb T^n))$ and $T_i^* T_n^p = T_{n-i}$ on $H^2(\bl\theta(\mb D^n)).$
\end{lem}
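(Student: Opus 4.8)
The plan is to push both identities onto the isotypic subspaces $\mb P_{\rm sgn}(L^2(\mb T^n))$ and $\mb P_{\rm sgn}(H^2(\mb D^n))$ through the intertwining unitaries $\Gamma_{\rm sgn}^\ell$ and $\Gamma_{\rm sgn}^h$ recorded just before the lemma, and then collapse everything onto the pointwise identity \eqref{eqn}, namely $\ov{\theta_i}\,\theta_n^p=\theta_{n-i}$ on $\mb T^n$. First I would record the relevant invariance facts. Since each $\theta_i$ is a $G$-invariant polynomial, its conjugate $\ov{\theta_i}$ is $G$-invariant as well, so both $M_{\theta_i}$ and $M_{\ov{\theta_i}}$ commute with every $R_\sigma$ and hence with the projection $\mb P_{\rm sgn}$. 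Consequently $\mb P_{\rm sgn}(L^2(\mb T^n))$ is invariant under $M_{\theta_i}$ and under its Laurent adjoint $M_{\ov{\theta_i}}=M_{\theta_i}^*$; that is, the subspace \emph{reduces} $M_{\theta_i}$, so the adjoint of $M_{\theta_i}\big|_{\mb P_{\rm sgn}(L^2(\mb T^n))}$ is exactly $M_{\ov{\theta_i}}\big|_{\mb P_{\rm sgn}(L^2(\mb T^n))}$.

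For the $L^2$-statement I would use $\Gamma_{\rm sgn}^\ell M_i=M_{\theta_i}\Gamma_{\rm sgn}^\ell$ to transfer $M_i^*M_n^p$ to the operator $M_{\ov{\theta_i}}M_{\theta_n}^p=M_{\ov{\theta_i}\theta_n^p}$ acting on $\mb P_{\rm sgn}(L^2(\mb T^n))$, where I have used $M_n^p\leftrightarrow M_{\theta_n}^p=M_{\theta_n^p}$. Invoking \eqref{eqn} to replace $\ov{\theta_i}\theta_n^p$ by $\theta_{n-i}$ as $L^2$-functions on $\mb T^n$ turns this into $M_{\theta_{n-i}}\big|_{\mb P_{\rm sgn}(L^2(\mb T^n))}$, which corresponds to $M_{n-i}$ under $\Gamma_{\rm sgn}^\ell$. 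This settles the first identity.

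For the Hardy-space statement the structure is the same, with Laurent operators replaced by their Szeg\"o compressions. Here $T_i$ corresponds to $M_{\theta_i}\big|_{\mb P_{\rm sgn}(H^2(\mb D^n))}$; its adjoint is the compression of the co-analytic Toeplitz operator $T_{\ov{\theta_i}}=\widetilde P M_{\ov{\theta_i}}$, and because $\ov{\theta_i}$ is $G$-invariant, $T_{\ov{\theta_i}}$ commutes with $\mb P_{\rm sgn}$ and hence leaves $\mb P_{\rm sgn}(H^2(\mb D^n))$ invariant, so $T_i^*$ corresponds to $T_{\ov{\theta_i}}\big|_{\mb P_{\rm sgn}(H^2(\mb D^n))}$. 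For $f\in H^2(\mb D^n)$ the function $\theta_n^p f$ is analytic, so $T_{\ov{\theta_i}}(\theta_n^p f)=\widetilde P(\ov{\theta_i}\theta_n^p f)=\widetilde P(\theta_{n-i}f)=\theta_{n-i}f$, where the middle step uses \eqref{eqn} on $\mb T^n$ and the last uses $\theta_{n-i}f\in H^2(\mb D^n)$. Thus $T_i^*T_n^p$ corresponds to $M_{\theta_{n-i}}\big|_{\mb P_{\rm sgn}(H^2(\mb D^n))}$, i.e.\ to $T_{n-i}$, which is the desired identity.

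The one genuinely delicate point, and the only place care is needed, is the computation of the adjoint of a \emph{restricted} multiplication operator: I must verify that the isotypic subspace truly reduces $M_{\theta_i}$ in the $L^2$ case and is genuinely invariant under $T_{\ov{\theta_i}}$ in the Hardy case, so that ``adjoint of the restriction'' equals ``restriction of the adjoint.'' This is precisely what the $G$-invariance of $\ov{\theta_i}$ delivers through its commutation with $\mb P_{\rm sgn}$; once this is in place, the remainder is a direct substitution via \eqref{eqn}.
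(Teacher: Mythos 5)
Your proof is correct and takes essentially the same route as the paper's: both arguments rest on the intertwining unitaries $\Gamma_{\rm sgn}^\ell$, $\Gamma_{\rm sgn}^h$ together with the boundary identity \eqref{eqn}, so the content is identical. The only difference is in bookkeeping for the Hardy-space half: the paper deduces it directly from the already-proved $L^2$ identity by a sesquilinear computation on $H^2(\bl\theta(\mb D^n))$, namely $\inner{T_i^* T_n^p f}{g}=\inner{M_n^p f}{M_i g}_{L^2}=\inner{M_{n-i}f}{g}_{L^2}=\inner{T_{n-i}f}{g}$, whereas you lift to $\mb P_{\rm sgn}(H^2(\mb D^n))$ and argue with the Szeg\"o projection $\widetilde{P}$ and the reducing-subspace property of the isotypic component --- your version makes explicit the adjoint-of-restriction point that the paper's weak formulation quietly sidesteps.
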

\begin{proof}
    The first part follows from Equation \eqref{eqn} and the unitary equivalence of $M_i|_{L^2_{\rm sgn}(\bl \theta(\mb T^n))}$ and $M_{\theta_i}|_{\mb P_{\rm sgn}(L^2(\mb T^n))}$. For the second part, we  observe for  $f,g \in H^2(\bl\theta(\mb D^n))$  that $$\inner{T_i^* T_n^p f}{g}=\inner{ T_n^p f}{T_ig}= {\inner{ M_n^p f}{M_ig}}_{L^2}= {\inner{M_i^*M_n^p f}{g}}_{L^2}={\inner{M_{n-i} f}{g}}_{L^2}= \inner{T_{n-i} f}{g}. $$
\end{proof}  
\begin{lem}\label{lem1}
    Let $T$ be a bounded operator on $L^2_{\rm sgn}(\bl \theta(\mb T^n))$ which commutes with  $M_i$ for $i=1,\ldots,n.$ Then there exists a function $\phi \in L^\infty(\bl \theta(\mb T^n))$ such that $T=M_\phi.$
\end{lem}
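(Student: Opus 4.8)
The plan is to mimic the classical argument that an operator commuting with all the coordinate Laurent operators must itself be a multiplication (Laurent) operator, carried out directly on $L^2_{\rm sgn}(\bl\theta(\mb T^n))$, with the algebraic relations \eqref{eqn} playing the role that the conjugate variables play in the torus case. First I would set $\phi:=T1$, which is legitimate because $d\Theta_{\rm sgn}$ is a finite measure, so the constant function $1$ lies in $L^2_{\rm sgn}(\bl\theta(\mb T^n))$. A crucial preliminary observation is that $M_n$ is invertible on $L^2_{\rm sgn}(\bl\theta(\mb T^n))$: since $\theta_n(z)=(z_1\cdots z_n)^q$ is unimodular on $\mb T^n$, the $n$-th coordinate $w_n$ satisfies $|w_n|=1$ a.e. on $\bl\theta(\mb T^n)$, so $M_n$ is unitary with $M_n^{-1}=M_{\ov{w_n}}$. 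Consequently $T$ commutes not only with each $M_i$ but also with $M_n^{-1}$ (this is also consistent with the first part of Lemma \ref{prelem}).

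Next I would evaluate $T$ on the monomials $w^{\bl\alpha}w_n^{-k}$ with $\bl\alpha\in\mb N_0^n$ and $k\ge 0$, where $w^{\bl\alpha}=\prod_{i=1}^n w_i^{\alpha_i}$. Since $T$ commutes with $M_1,\dots,M_n$ and with $M_n^{-1}$, and since each such monomial equals $M_1^{\alpha_1}\cdots M_n^{\alpha_n}M_n^{-k}$ applied to $1$, I obtain
\[ T\big(w^{\bl\alpha}w_n^{-k}\big)=M_1^{\alpha_1}\cdots M_n^{\alpha_n}M_n^{-k}\,T1=w^{\bl\alpha}w_n^{-k}\,\phi=M_\phi\big(w^{\bl\alpha}w_n^{-k}\big). \]
Thus $T$ agrees with $M_\phi$ on the linear span $\mathcal P$ of all such monomials.

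The key point is then that $\mathcal P$ is precisely the unital $*$-subalgebra of $C(\bl\theta(\mb T^n))$ generated by the coordinate functions $w_1,\dots,w_n$. Indeed, \eqref{eqn} gives $\ov{w_i}=w_{n-i}w_n^{-p}$ for $i=1,\dots,n-1$, while the unimodularity of $\theta_n$ gives $\ov{w_n}=w_n^{-1}$, so every conjugate coordinate already lies in $\mathcal P$; conversely $\mathcal P$ is contained in this $*$-algebra since $w_n^{-1}=\ov{w_n}$. This algebra contains the constants and trivially separates the points of the compact set $\bl\theta(\mb T^n)$, so by the Stone--Weierstrass theorem it is uniformly dense in $C(\bl\theta(\mb T^n))$, hence dense in $L^2_{\rm sgn}(\bl\theta(\mb T^n))$.

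Finally I would upgrade "$T=M_\phi$ on the dense subspace $\mathcal P$" to the full conclusion. Since $T$ is bounded and $\mathcal P$ is dense, one has $\norm{\phi g}_{L^2_{\rm sgn}}=\norm{Tg}_{L^2_{\rm sgn}}\le \norm{T}\,\norm{g}_{L^2_{\rm sgn}}$ for every $g\in\mathcal P$; the standard $L^2$-multiplier argument then forces $\phi\in L^\infty(\bl\theta(\mb T^n))$ with $\norm{\phi}_\infty\le\norm{T}$. Hence $M_\phi$ is a bounded operator that agrees with $T$ on the dense subspace $\mathcal P$, so $T=M_\phi$ with $\phi\in L^\infty(\bl\theta(\mb T^n))$, as claimed. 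I expect the main obstacle, and the only place where the geometry of $\bl\theta(\mb D^n)$ genuinely enters, to be the density of $\mathcal P$: this is exactly where the relations \eqref{eqn} together with the unimodularity of $\theta_n$ are indispensable, for without conjugation-closure the coordinate monomials would span only a proper one-sided subspace and Stone--Weierstrass would fail to apply.
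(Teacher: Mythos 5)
Your proof is correct, but it takes a genuinely different route from the paper's. The paper's proof is soft and operator-algebraic: it observes that $(M_1,\ldots,M_n)$ is a commuting tuple of normal operators, invokes the spectral theorem to identify the von Neumann algebra they generate with $L^\infty(\bl\theta(\mb T^n))$ acting by multiplication on $L^2_{\rm sgn}(\bl\theta(\mb T^n))$, and then uses that this algebra is maximal abelian, hence equal to its own commutant; note that this route tacitly needs $T$ to commute with the adjoints $M_i^*$ as well, which follows from the Fuglede--Putnam theorem (left implicit in the paper). Your argument is elementary and constructive instead: you identify the symbol as $\phi=T1$, use the unitarity of $M_n$ and the relations \eqref{eqn} (in the form $\ov{w_i}=w_{n-i}w_n^{-p}$, $\ov{w_n}=w_n^{-1}$) to show that the span $\mathcal P$ of the monomials $w^{\bl\alpha}w_n^{-k}$ is a self-adjoint unital algebra separating points, apply Stone--Weierstrass to get density in $L^2_{\rm sgn}(\bl\theta(\mb T^n))$ (the measure being finite and Borel on a compact set), and finish with the standard multiplier extension, where the bound on the dense subspace is upgraded to all of $L^2$ via an a.e.-convergent subsequence. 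What each approach buys: the paper's proof is much shorter and generalizes immediately to any situation where the multiplication tuple generates a maximal abelian von Neumann algebra, while yours makes visible exactly where the geometry of $G(m,p,n)$ enters (the relations \eqref{eqn} give conjugation-closure of the monomial algebra, so commutation with the adjoints is automatic rather than an appeal to Fuglede), yields the quantitative bound $\norm{\phi}_\infty\le\norm{T}$, and avoids the Taylor joint spectrum and spectral-theorem machinery entirely.
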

\begin{proof}
We first note that $(M_1,\dots,M_n)$ is an $n$-tuple of commuting normal operators on $L^2_{\rm sgn}(\bl \theta(\mb T^n))$. Therefore, the Taylor joint spectrum of $(M_1,\dots,M_n)$ is $\bl \theta(\mb T^n).$ Invoking the spectral theorem for commuting normal operators, it follows that the von Neumann algebra generated by $(M_1,\dots,M_n)$ is given by the algebra $L^\infty(\bl \theta(\mb T^n)).$ Since it is a maximal von Neumann algebra, the commutant algebra of $(M_1,\dots,M_n)$ is $*$-isomorphic to $L^\infty(\bl \theta(\mb T^n)).$ Hence  the result follows.
\end{proof}
 Let $T$ be a bounded operator on $\mb P_{\rm sgn}(L^2(\mb T^n))$ which commutes with $M_{\theta_i}$ for $i=1,\ldots,n.$ An appeal to Lemma \ref{equi1} and Lemma \ref{lem1} allows us to conclude that there exists a $G$-invariant function $\widetilde{\phi} \in L^\infty(\mb T^n)$ such that $T=M_{\widetilde{\phi}}.$

 Now we are ready to prove a Brown-Halmos type characterization of Toeplitz operators on $H^2(\bl\theta(\mb D^n)),$  $\bl\theta$ being a basic polynomial associated to the group $G(m,p,n)$ satisfying Equation \eqref{eqn}.

\begin{proof}[Proof of Theorem \ref{bhthm}]
Let $T=T_\phi$ with $\phi\in L^\infty(\bl\theta(\mb T^n)).$ Then 
$$\inner{T^*_i T_\phi T^p_nf}{g}= \inner{ T_\phi T^p_nf}{T_ig}= \inner{ M_\phi M_n^pf}{M_ig}_{L^2}=\inner{  M^*_{i}M_n^pM_\phi f}{g}_{L^2}.$$
Consequently,  it follows from Lemma \ref{prelem} that
$$\inner{T^*_i T_\phi T^p_nf}{g}= \inner{  M_{n-i}M_\phi f}{g}_{L^2}=\inner{PM_\phi M_{n-i} f}{g}_{H^2}=\inner{T_\phi T_{n-i} f}{g}.$$
Since  $$\inner{T^*_n T_\phi T_nf}{g}=\inner{ T_\phi T_nf}{T_ng}=\inner{ M_\phi M_nf}{M_ng}_{L^2}=\inner{ M_\phi f}{g}_{L^2}=\inner{ T_\phi f}{g},$$
the  second condition follows. 

For the converse, we work on $\mb P_{\rm sgn}(L^2(\mb T^n)).$ Depending on the group $G=G(m,p,n),$ there exists a subset $\m I_{G}$ of $\mb Z^n$ such that 
$$\{\gamma_{\bl m}(z) := \sqrt{|G|}\mb P_{\rm sgn} z^{\bl m}: \bl m \in \m I_G\}$$
forms an orthogonal basis of $\mb P_{\rm sgn}(L^2(\mb T^n)),$ cf. Example \ref{onb}. Let 
\bea\label{holset} \m I_{G,{\rm hol}}:= \m I_G \cap \mb N_{0}^n.\eea
The set $\{\gamma_{\bl m} : \bl m \in  \m I_{G,{\rm hol}} \}$ forms an orthogonal basis for $\mb P_{\rm sgn}(H^2(\mb D^n)).$  

Recall that $q=m/p.$ Since for every $r\geq 0,$ $\theta_n^r(z_1,\ldots,z_n)=(z_1\cdots z_n)^{qr}$ is a $G$-invariant polynomial, 
$$\theta_n^r(z) \gamma_{\bl m}(z)= \sqrt{|G|}\theta_n^r(z) \mb P_{\rm sgn} z^{\bl m} = \sqrt{|G|} \mb P_{\rm sgn} \theta_n^r(z)z^{\bl m} = \sqrt{|G|}\mb P_{\rm sgn} z^{\bl m +q\bl r} = \gamma_{\bl m + q\bl r}(z),$$ where $q \bl r$ denotes the $n$-tuple $(qr,\ldots,qr).$ 

By hypothesis and Lemma \ref{equi1}, there exists a bounded operator $\widetilde{T}$ on $\mb P_{\rm sgn}(H^2(\mb D^n))$ which is unitarily equivalent to $T$ on $H^2(\bl \theta(\mb D^n))$ and satisfies the conditions 
$$T_{\theta_i}^*\widetilde{T}T_{\theta_n}^p = \widetilde{T} T_{\theta_{n-i}} \text{~~~~and~~~~} T_{\theta_n}^*\widetilde{T}T_{\theta_n}=\widetilde{T}.$$ 
Next, we show that $\widetilde{T}=T_{\widetilde {\phi}}$ for a $G$-invariant symbol $\widetilde {\phi}$ in $L^\infty( \mb T^n).$ Clearly, for every non-negative integer $r,$  $T_{\theta_n}^{r*}\widetilde{T}T_{\theta_n}^{r}=\widetilde{T}.$ Hence $$\inner{\widetilde{T}\gamma_{\bl p}}{\gamma_{\bl m}}= \inner{\widetilde{T}T_{\theta_n}^{r}\gamma_{\bl p}}{T_{\theta_n}^{r}\gamma_{\bl m}}=\inner{\widetilde{T}\gamma_{\bl p+q\bl r}}{\gamma_{\bl m+q\bl r}} \text{~~~~for every~~~~} r\geq 0 \text{~~~~and~~~~} \bl p,\bl m \in \m I_{G,{\rm hol}}.$$

For every non-negative integer $r,$ we define an operator $A_r=M^{*r}_{\theta_n}\widetilde{T}\widetilde{P}_{\rm sgn}M^r_{\theta_n}$ on $\mb P_{\rm sgn}(L^2(\mb T^n)),$ $\widetilde{P}_{\rm sgn}: \mb P_{\rm sgn}(L^2(\mb T^n)) \to \mb P_{\rm sgn}(H^2(\mb D^n))$ being the associated orthogonal projection. Note that 
$$M_{\theta_n}^r \gamma_{\bl p} = \gamma_{\bl p+q\bl r} \in \mb P_{\rm sgn}(L^2(\mb T^n)) \text{~~for~~} \bl p\in \m I_{G} {~~and~~} r>0.$$ 
Moreover, for every $\bl p \in \m I_{G},$ there exists a sufficiently large $r$ (depending on $\bl p$) such that 
$$M_{\theta_n}^r \gamma_{\bl p} = \gamma_{\bl p+q\bl r} \in \mb P_{\sgn}(H^2(\mb D^n)).$$ 
For sufficiently large $r,$ it follows that 
$$\inner{A_r \gamma_{\bl p}}{\gamma_{\bl m}}= \inner{\widetilde{T}\widetilde{P}_{\rm sgn}M^r_{\theta_n} \gamma_{\bl p}}{M^{r}_{\theta_n}\gamma_{\bl m}}=\inner{\widetilde{T}\gamma_{\bl p+q\bl r}}{\gamma_{\bl m+q\bl r}} \text{~~for~~} \bl p,\bl m \in \m I_{G}.$$  Therefore, if $\phi_1$ and $\phi_2$ are finite linear combinations of $\gamma_{\bl m}$'s for $\bl m \in \m I_G,$ then $\{\inner{A_r \phi_1}{\phi_2}\}$ is convergent. Also, for every  $r\geq 0,$ we have $\norm{A_r} \leq \norm{A_0}=\Vert\widetilde{T}\Vert$ which implies that $\{A_r\}$ converges in  weak  operator topology to a bounded operator, say $A_\infty,$ on $\mb P_{\rm sgn}(L^2(\mb T^n)).$ To prove that $A_\infty$ commutes with each $M_{\theta_i},$ we first observe that $A_\infty$ commutes with $M_{\theta_n}$ and for $1\leq i \leq n-1,$ it follows that
\Bea \inner{M_{\theta_i}^* A_\infty^* \gamma_{\bl p}}{\gamma_{\bl m}} &=& \lim_{r} \inner{M_{\theta_i}^* A_r^* \gamma_{\bl p}}{\gamma_{\bl m}}\\ 
&=& \lim_{r} \inner{M_{\theta_i}^* M_{\theta_n}^{*r} \widetilde{T}^* \widetilde{P}_{\rm sgn} M_{\theta_n}^{r}\gamma_{\bl p}}{\gamma_{\bl m}} \\ &=&
\lim_{r} \inner{M_{\theta_i}^*  \widetilde{T}^* \widetilde{P}_{\rm sgn}M_{\theta_n}^{r}\gamma_{\bl p}}{M_{\theta_n}^{r}\gamma_{\bl m}} \\ &=&
\lim_{r} \inner{T_{\theta_i}^{*}  \widetilde{T}^* \widetilde{P}_{\rm sgn}M_{\theta_n}^{r}\gamma_{\bl p}}{M_{\theta_n}^{r}\gamma_{\bl m}} \\ &=& \lim_{r} \inner{T_{\theta_n}^{*p}  \widetilde{T}^* T_{\theta_{n-i}} \widetilde{P}_{\rm sgn}M_{\theta_n}^{r}\gamma_{\bl p}}{M_{\theta_n}^{r}\gamma_{\bl m}} \\ &=& \lim_{r} \inner{M_{\theta_n}^{*p}  \widetilde{T}^* \widetilde{P}_{\rm sgn}M_{\theta_n}^{r} M_{\theta_{n-i}} \gamma_{\bl p}}{M_{\theta_n}^{r}\gamma_{\bl m}}\\ &=& \lim_{r} \inner{M_{\theta_n}^{*(r+p)}  \widetilde{T}^* \widetilde{P}_{\rm sgn}M_{\theta_n}^{(r+p)}M_{\theta_n}^{*p} M_{\theta_{n-i}} \gamma_{\bl p}}{\gamma_{\bl m}}\\ &=& \lim_{r} \inner{M_{\theta_n}^{*(r+p)}  \widetilde{T}^* \widetilde{P}_{\rm sgn}M_{\theta_n}^{(r+p)}M^*_{\theta_{i}} \gamma_{\bl p}}{\gamma_{\bl m}} \\ &=& \inner{ A_\infty^*M_{\theta_i}^* \gamma_{\bl p}}{\gamma_{\bl m}}.\Eea 
Thus, $A_\infty$ commutes with each $M_{\theta_i}.$ Hence  there exists a $G$-invariant function $\widetilde{\phi}$ in $L^\infty(\mb T^n)$ such that $A_\infty = M_{\widetilde{\phi}}.$ Let $f,g \in \mb P_{\rm sgn}(H^2(\mb D^n)),$ then \Bea \inner{\widetilde{P}_{\rm sgn}M_{\widetilde{\phi}} f}{g}  = \inner{A_\infty f}{g} &=& \lim_{r}\inner{A_r f}{g}\\ &=& \lim_{r} \inner{M_{\theta_n}^{*r} \widetilde{T} \widetilde{P}_{\rm sgn} M_{\theta_n}^{r}f}{g} \\ &=& \lim_{r} \inner{ \widetilde{T} T_{\theta_n}^{r}f}{T_{\theta_n}^{r}g} \\ &=& \inner{ \widetilde{T}f}{g}. \Eea 
Thus, $\widetilde T=\widetilde{P}_{\rm sgn}M_{\widetilde{\phi}}=T_{\widetilde \phi},$ where $\widetilde{\phi} =\phi \circ \bl \theta$ for $\phi \in L^\infty(\bl \theta(\mb T^n)).$
This completes the proof.
\end{proof} 
We conclude this paper by  characterizing  the compact Toeplitz operators on $H^2(\bl\theta(\mb D^n)).$
\begin{thm} The only compact Toeplitz operator on $H^2(\bl\theta(\mb D^n))$ is the zero operator.
\end{thm}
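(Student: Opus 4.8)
The plan is to combine the Brown--Halmos relation $T_n^*TT_n=T$ supplied by Theorem \ref{bhthm} with the observation that $T_n$ is a \emph{pure} isometry, and then to run the classical argument that any compact operator fixed by conjugation with arbitrarily high powers of a pure isometry must vanish. By the unitary $\Gamma_{\sgn}^h$ of Lemma \ref{equi1}, a Toeplitz operator $T$ on $H^2(\bl\theta(\mb D^n))$ is compact if and only if its unitary copy $\widetilde T=\Gamma_{\sgn}^h\,T\,\Gamma_{\sgn}^{h*}$ on $\mb P_{\sgn}(H^2(\mb D^n))$ is, and under this identification $T_n$ corresponds to multiplication $M_{\theta_n}$ by $\theta_n=(z_1\cdots z_n)^q$. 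Since $\theta_n$ is $G$-invariant and holomorphic, $M_{\theta_n}$ maps $\mb P_{\sgn}(H^2(\mb D^n))$ into itself, and because $|\theta_n|\equiv 1$ on the \v{S}ilov boundary $\mb T^n$ we have $M_{\theta_n}^*M_{\theta_n}=I$; thus $M_{\theta_n}$ (equivalently $T_n$) is an isometry.

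Next I would verify that this isometry is pure, i.e. $M_{\theta_n}^{*r}\to 0$ in the strong operator topology. Using the orthogonal basis $\{\gamma_{\bl m}:\bl m\in\m I_{G,{\rm hol}}\}$ of $\mb P_{\sgn}(H^2(\mb D^n))$ from the proof of Theorem \ref{bhthm}, the computation recorded there gives $M_{\theta_n}^r\gamma_{\bl m}=\gamma_{\bl m+q\bl r}$, where $q\bl r=(qr,\ldots,qr)$. As $r\to\infty$ the multi-indices $\bl m+q\bl r$ escape every finite region, so the decreasing ranges $M_{\theta_n}^r\mb P_{\sgn}(H^2(\mb D^n))$ intersect in $\{0\}$; by the Wold decomposition $M_{\theta_n}$ is completely non-unitary and $M_{\theta_n}^{*r}\to 0$ strongly. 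Hence $M_{\theta_n}^r\to 0$ weakly, since $\inner{M_{\theta_n}^r f}{g}=\inner{f}{M_{\theta_n}^{*r}g}\to 0$ for all $f,g\in\mb P_{\sgn}(H^2(\mb D^n))$.

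Finally, suppose $\widetilde T$ is compact. By Theorem \ref{bhthm} it satisfies $M_{\theta_n}^*\widetilde T M_{\theta_n}=\widetilde T$, and iterating yields $M_{\theta_n}^{*r}\widetilde T M_{\theta_n}^r=\widetilde T$ for every $r\ge 0$. Fix $f\in\mb P_{\sgn}(H^2(\mb D^n))$. Since $M_{\theta_n}^r f\to 0$ weakly and $\widetilde T$ is compact, $\widetilde T(M_{\theta_n}^r f)\to 0$ in norm; as $M_{\theta_n}^{*r}$ is a contraction, $\norm{\widetilde T f}=\norm{M_{\theta_n}^{*r}\widetilde T M_{\theta_n}^r f}\le\norm{\widetilde T(M_{\theta_n}^r f)}\to 0$, so $\widetilde T f=0$. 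As $f$ is arbitrary, $\widetilde T=0$ and therefore $T=0$. The only genuinely substantive step is the purity of $T_n$: everything else is the standard compactness/invariance mechanism, so the main point to get right is that the orbit structure $M_{\theta_n}^r\gamma_{\bl m}=\gamma_{\bl m+q\bl r}$ indeed forces the residual subspace $\bigcap_r M_{\theta_n}^r\mb P_{\sgn}(H^2(\mb D^n))$ to be trivial.
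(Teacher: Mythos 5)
Your proof is correct and is essentially the paper's own argument: both rest on iterating the Brown--Halmos identity $T_n^{*r}TT_n^r=T$, on the shift action $T_{\theta_n}^r\gamma_{\bl m}=\gamma_{\bl m+q\bl r}$ on the orthogonal basis of $\mb P_{\sgn}(H^2(\mb D^n))$, and on compactness converting the weak nullity of $\{\gamma_{\bl m+q\bl r}\}_r$ into norm decay. The only difference is packaging: you phrase the decay vector-wise via purity of the isometry $M_{\theta_n}$ (Wold decomposition and $M_{\theta_n}^{*r}\to 0$ strongly), while the paper checks directly that every matrix entry $\inner{T_{u\circ\bl\theta}\gamma_{\bl p}}{\gamma_{\bl m}}$ vanishes.
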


 \begin{proof}
   Recall the definition of $\m I_{G,{\rm hol}}$ from Equation \eqref{holset}.  For some $\bl m,\bl p \in \m I_{G,{\rm hol}},$ it follows that
     \Bea \inner{T_{u\circ \bl \theta} \gamma_{\bl p}}{\gamma_{\bl m}} = \inner{T^{r*}_nT_{u\circ \bl \theta} T^r_n \gamma_{\bl p}}{\gamma_{\bl m}} =\inner{T_{u\circ \bl \theta} \gamma_{\bl p+q\bl r}}{\gamma_{\bl m+q\bl r}} \text{ (for every } r\geq 0).\Eea
     Since $T_u$ is compact, $\norm{T_{u\circ \bl \theta}\gamma_{\bl p}}$ goes to $0$ as $\bl p$ goes to infinity. Hence from the above, we have \Bea |\inner{T_{u\circ \bl \theta} \gamma_{\bl p}}{\gamma_{\bl m}}| = |\inner{T_{u\circ \bl \theta} \gamma_{\bl p+q\bl r}}{\gamma_{\bl m+q\bl r}}| \leq  \norm{T_{u\circ \bl \theta}\gamma_{\bl p +q\bl r}} \to 0\Eea as $r \to 0.$ Since $\bl m,\bl p \in \m I_{G,{\rm hol}}$ are chosen arbitrarily, $u$ is identically zero.
 \end{proof}

 \subsection*{Conflict of interest} The authors declare that there is no conflict of interest.



\bibliographystyle{siam}
\bibliography{Bibliography.bib}

\end{document}